\newcolumntype{F}{>{$}c<{\hspace{-0.9ex}$}}
\newcolumntype{:}{>{$}m{0.8ex}<{$}}
\newcolumntype{R}{>{$}r<{$}}
\newcolumntype{C}{>{$}c<{$}}
\newcolumntype{L}{>{$}l<{$}}
\newcolumntype{N}{@{}>{$}l<{$}}
\newlength\horspace
\newcommand{\h}[1][1.0]{\hspace*{#1\horspace}}
\newlength\verspace
\renewcommand{\v}[1][1.0]{\vspace*{#1\verspace}\xspace}
\tikzset{iso/.style={draw=none,every to/.append style={edge node={node [sloped, allow upside down, auto=false]{$\cong$}}}}}
\tikzset{adjunction/.style={draw=none,every to/.append style={edge node={node [sloped, allow upside down, auto=false]{$\dashv$}}}}}
\tikzset{simeq/.style={draw=none,every to/.append style={edge node={node [sloped, allow upside down, auto=false]{$\simeq$}}}}}
\tikzset{simeqS/.style={draw=none,every to/.append style={edge node={node [sloped, allow upside down, auto=false]{$\raisebox{0.8em}{$\simeq$}$}}}}}
\tikzset{aiso/.style={simeqS,preaction={draw,->}}}
\tikzset{proarrowS/.style={draw=none,every to/.append style={edge node={node [sloped, allow upside down, auto=false]{\raisebox{1.4pt}{\small$\shortmid$}}}}}}
\tikzset{proarrow/.style={proarrowS,preaction={draw,->}}}
\tikzset{nullS/.style={draw=none,every to/.append style={edge node={node [sloped, allow upside down, auto=false]{\raisebox{-1.16 ex}{$\circ$}}}}}}
\tikzset{null/.style={nullS,preaction={draw,->}}}
\tikzset{dotdot/.style={dash pattern=on 0.25ex off 0.2ex, dash phase=0ex}}
\tikzset{RightA/.style={double distance=3.5pt,>={Implies},->},%
	triple/.style={-,preaction={draw,RightA}},%
	quadruple/.style={preaction={draw,RightA,shorten >=0pt},shorten >=1pt,-,double,double distance=0.2pt}}
\tikzset{Right/.style={double distance=1.7pt,>={Implies},->}}
\tikzset{simeqSRight/.style={draw=none,every to/.append style={edge node={node [sloped, allow upside down, auto=false]{$\raisebox{-1em}{\rotatebox{180}{$\simeq$}}$}}}}}
\tikzset{twoiso/.style={simeqSRight,preaction={draw,Right}}}
\theoremstyle{plain} 
\newtheorem{theorem}{Theorem}[section]
\newtheorem{lemma}[theorem]{Lemma}
\newtheorem{proposition}[theorem]{Proposition}
\newtheorem{corollary}[theorem]{Corollary}
\theoremstyle{definition}
\newtheorem{definition}[theorem]{Definition}
\newtheorem{remark}[theorem]{Remark}
\newtheorem{example}[theorem]{Example}
\def\nameit#1{\textrm{#1}~}
\def\thex{\nameit{Theorem}}
\def\prox{\nameit{Proposition}}
\def\corx{\nameit{Corollary}}
\def\defx{\nameit{Definition}}
\def\remx{\nameit{Remark}}
\def\exax{\nameit{Example}}
\def\dfn#1{{\itshape #1}}
\def\predfn#1{{\itshape #1}}
\NewDocumentEnvironment{cd}{s O{6} O{6} b}{%
	\IfBooleanF{#1}{\begin{equation*}}\begin{tikzcd}[row sep=#2ex,column sep=#3ex,ampersand replacement=\&]
			#4
		\end{tikzcd}\IfBooleanF{#1}{\end{equation*}}\ignorespacesafterend}{}
\newenvironment{enum}{\begin{enumerate}[label=$($\hspace{0.12ex}\roman*\hspace{0.075ex}$)$]}{\end{enumerate}}
\newenvironment{enumT}{\begin{enumerate}[label=$($\hspace{-0.1ex}\roman*\hspace{0.13ex}$)$]}{\end{enumerate}}
\newenvironment{eqD}[1]{\begin{equation}\label{#1}}{\end{equation}\ignorespacesafterend}
\newenvironment{eqD*}{\begin{equation*}}{\end{equation*}\ignorespacesafterend}
\newcommand{\C}{\mathbb{C}}
\renewcommand{\L}{\mathfrak{L}}
\newcommand{\E}{\mathcal{E}}
\newcommand{\M}{\mathcal{M}}
\newcommand{\Esq}{\mathfrak{E}}
\newcommand{\Msq}{\mathfrak{M}}
\newcommand{\N}{\mathcal{N}}
\newcommand{\0}{\mathbf{0}}
\newcommand{\Set}{\mathbf{Set}}
\newcommand{\Cat}{\mathbf{Cat}}
\NewDocumentCommand{\Alg}{t+ t' m}{
	\ensuremath{\IfBooleanT{#1}{\mathbf{Ps}\mbox{-}}{#3}\mbox{-}\mathbf{\IfBooleanT{#2}{Co}Alg}}
}
\def\:{\colon}
\def\c{\circ}
\newcommand{\iso}{\cong}
\def\phi{\varphi}
\def\eps{\varepsilon}
\newcommand{\tom}{\rightarrowtail}
\newcommand{\toe}{\twoheadrightarrow}
\newcommand{\Hom}[3][]{\operatorname{Hom}_{\mkern1mu #1}\mkern-1.5mu\left({#2},{#3}\right)}
\newcommand{\HomC}[3]{{#1}\left({#2},\h[1]{#3}\right)}
\newcommand{\id}[1]{\operatorname{id}_{#1}}
\newcommand{\Id}[1]{\operatorname{Id}_{#1}}
\newcommand{\op}{\ensuremath{^{\operatorname{op}}}}
\newcommand{\x}[1][]{\times_{#1}}
\newcommand{\aar}[2][]{\xrightarrow[#1]{#2}}
\newcommand{\aR}[2][]{%
	\ext@arrow 0055{\Rightarrowfill@}{#1}{#2}}
\def\xLongrightarrowfill@{\arrowfill@\Relbar\Relbar\Longrightarrow}
\newcommand{\am}[2][]{%
	\ext@arrow 0395\xmapstofill@{#1}{#2}}
\def\xlongmapstofill@{\arrowfill@\relbar\relbar\longmapsto}
\def\xlongrightarrowfill@{\arrowfill@\relbar\relbar\longrightarrow}
\newcommand{\aarr}[2][]{%
	\ext@arrow 0099\xlongrightarrowfill@{#1}{#2}}
\newcommand{\eqq}{\DOTSB\protect\Relbar\protect\joinrel\Relbar}
\def\xeqqfill@{\arrowfill@\Relbar\Relbar\eqq}
\newcommand{\aeqq}[2][]{%
	\ext@arrow 0099\xeqqfill@{#1}{#2}}
\def\xRrightarrowfill@{\arrowfill@\equiv\equiv\Rrightarrow}
\newcommand{\aM}[2][]{\ext@arrow 0359\xRrightarrowfill@{#1}{#2}}    
\def\nullrightarrowfill@{\arrowfill@{\relbar{\circ}}\relbar\longrightarrow}
\newcommand{\anull}[2][]{
	\ext@arrow0099\nullrightarrowfill@{#1}{#2}}
\newcommand{\aiso}[1]{\overset{#1}{\iso}}
\newcommand{\PB}[1]{\arrow[#1,phantom,"\scalebox{1.6}{\color{black}$\lrcorner$}",very near start]}
\newcommand{\scaleu}[2][1.2]{{\scalebox{#1}{$#2$}}}
\NewDocumentCommand{\fib}{O{n} O{2.3} mmm}{%
	\begin{cd}*[#2][5]
		{#3}\ifx#1n{\arrow[d,"{\,\scaleu{#4}}"]}\else{\ifx#1i{\arrow[d,hookrightarrow,"{\,\scaleu{#4}}"]}\else{\ifx#1e{\arrow[d,equal,"{\,\scaleu{#4}}"]}\else{\ifx#1R{\arrow[d,Rightarrow,"{\,\scaleu{#4}}"]}\fi}\fi}\fi}\fi\\
		{#5}\ifx#1o{\arrow[u,"{\,\scaleu{#4}}"']}\fi
	\end{cd}\xspace
}
\newcommand{\Ar}[4][]{\arrow[#2,"{#3}"{#1},""{name=#4, anchor=center}]}
\newcommand{\Ars}[4][]{\arrow[#2,"{#3}"'{#1},""{name=#4, anchor=center}]}
\newcommand{\Arb}[6][]{\arrow[#2,"{#3}"{#1},from=#4,to=#5,shorten <= #6 em, shorten >= #6 em]}
\newcommand{\Arbs}[6][]{\arrow[#2,"{#3}"'{#1},from=#4,to=#5,shorten <= #6 em, shorten >= #6 em]}
\NewDocumentCommand{\tc}{s t+ O{7} O{30} O{} O{} O{} o}{
	\def\footc##1##2##3##4##5{%
		\FPmul\Mulresulttwo{#3}{#3}%
		\FPmul\Mulresult{0.0026}{\Mulresulttwo}%
		\IfBooleanTF{#1}{\begin{cd}*}{\begin{cd}}[#3][#3]
				{##1}\Ar[#5]{r,bend left=#4}{##3}{A}\Ars[#6]{r,bend right=#4}{##4}{B}\&{##2}
				\IfBooleanTF{#2}{\Arb[description,pos=0.49]}{\Arb}{Rightarrow #7}{\mkern1mu {##5}}{A}{B}{\IfNoValueTF{#8}{\Mulresult}{#8}}
		\end{cd}}%
		\footc}
	\NewDocumentCommand{\tcv}{s t' O{5} O{38} mmmmm}{
		\FPmul\Mulresulttwo{#3}{#3}%
		\FPmul\Mulresult{0.0026}{\Mulresulttwo}%
		\IfBooleanTF{#1}{\begin{cd}*}{\begin{cd}}[#3][#3]
				{#5}\IfBooleanTF{#2}{\Ars{d,leftarrow,bend right=#4}{#7}{A}\Ar{d,leftarrow,bend left=#4}{#8}{B}}{\Ars{d,bend right=#4}{#7}{A}\Ar{d,bend left=#4}{#8}{B}}\\{#6}
				\Arb{Rightarrow}{#9}{A}{B}{\Mulresult}
		\end{cd}}
		\NewDocumentCommand{\sq}{s O{n} O{6} O{6} O{} O{2.7} O{2.2} O{0.5} O{n}}{%
			\def\foosq##1##2##3##4##5##6##7##8{%
				\IfBooleanTF{#1}{\begin{cd}*}{\begin{cd}}[#3][#4]
						{##1}\ifx#2p{\PB{rd}}\fi\arrow[r,"{##5}"]\ifx#9l{\arrow[d,equal,"{##6}"']}\else{\arrow[d,"{##6}"']}\fi\&{##2}\ifx#9r{\arrow[d,equal,"{##7}"]}\else{\arrow[d,"{##7}"]}\fi\ifx#2l{\arrow[ld,Rightarrow,shorten <=#6ex,shorten >=#7ex,"{#5}"{pos=#8}]}\fi \ifx#2i{\arrow[ld,twoiso,shorten <=#6ex,shorten >=#7ex,"{#5}"{pos=#8}]}\fi\\
						{##3}\ifx#9d{\arrow[r,equal,"{##8}"']}\else{\arrow[r,"{##8}"']}\fi\ifx#2o{\arrow[ur,Rightarrow,shorten <=#6ex,shorten >=#7ex,"{#5}"{pos=#8}]}\fi\&{##4}
				\end{cd}}%
				\foosq}
			\NewDocumentCommand{\tr}{s O{4.5} O{6.5} O{0} O{0} O{n} O{0} O{} O{0}}{%
				\def\footr##1##2##3##4##5##6{%
					\IfBooleanTF{#1}{\begin{cd}*}{\begin{cd}}[#3][#2]
							{##1}\arrow[rr,"{##4}"]
							\Ars[inner sep =0.2ex]{dr}{##5}{A}\&[#4ex]\&[#5ex]{##2}\Ar[inner sep =0.2ex]{ld}{##6}{B}\\
							\&{##3}
							\ifx#6l{\Arb{Rightarrow,shift right=#7em}{#8}{A}{B}{#9}}\else{\ifx#6o{\Arbs{Rightarrow,shift right=#7em}{#8}{B}{A}{#9}}\else{\ifx#6i{\Arbs[inner sep=0.9ex]{iso,shift right=#7em}{#8}{A}{B}{#9}}\else{\ifx#6e{\Arb{equal,shift right=#7em}{#8}{A}{B}{#9}}\else{}\fi}\fi}\fi}\fi
					\end{cd}}%
					\footr}
\begin{document}
					
					\title{Fibrational approach to Grandis exactness for 2-categories}
					
					\author{Elena Caviglia}
					\address{(Elena Caviglia) Department of Mathematical Sciences, Stellenbosch University, South Africa. National Institute for Theoretical and Computational Sciences (NITheCS), Stellenbosch, South Africa.}
					\email{elena.caviglia@outlook.com}
					
					\author{Zurab Janelidze}
					\address{(Zurab Janelidze) Department of Mathematical Sciences, Stellenbosch University, South Africa.
						National Institute for Theoretical and Computational Sciences (NITheCS), Stellenbosch, South Africa.}
					\email{zurab@sun.ac.za}
					
					\author{Luca Mesiti}
					\address{(Luca Mesiti) Department of Mathematical Sciences, Stellenbosch University, South Africa.}
					\email{luca.mesiti@outlook.com}
					
					\subjclass{18E10, 18E08, 18N10, 18D30}
					
					\keywords{Exact, abelian, 2-category, fibration, factorization system}

					\begin{abstract} 
						In an abelian category, the (bi)fibration of subobjects is isomorphic to the (bi)fibration of quotients. This property captures substantial information about the exactness structure of a category. Indeed, as it was shown by the second author and T.~Weighill, categories equipped with a proper factorization system such that the opfibration of subobjects relative to the factorization system is isomorphic to the fibration of relative quotients are precisely the Grandis exact categories. In this paper we characterize those $(1,1)$-proper factorization systems on a $2$-category in the sense of M.~Dupont and E.~Vitale, for which the weak $2$-opfibration of relative $2$-subobjects is biequivalent to the weak 2-fibration of relative $2$-quotients. This results in a new notion of $2$-dimensional exactness, which we then compare with similar notions in the context of categories enriched in pointed groupoids arising in the work of M.~Dupont and H.~Nakaoka.
					\end{abstract}
					
					\maketitle
					
					\setcounter{tocdepth}{1}
					
					\section{Introduction} 
					
					One of the central notions in homological algebra is that of an exact sequence. In categorical algebra, there is a wide range of axiomatic frameworks where exact sequences can be defined and studied. At the center of these frameworks is the classical notion of an abelian category, which includes categories of modules over a ring, and in particular, the category of $\mathbb{Z}$-modules (i.e., abelian groups), as well as sheaves of modules.
					
					Extension of homological algebra to $2$-dimensional categories is an on-going endeavor that is in its early phase of development. Motivated by homological properties of symmetric categorical groups (i.e., internal abelian pseudo-groups in the category of categories), a notion of $2$-dimensional abelian category was proposed and studied by M.~Dupont \cite{Dup08}. There is also a related work on $2$-dimensional cohomology theory by H.~Nakaoka \cite{Nak08, Nak10}. This paper aims at expanding 2-dimensional homological algebra, with insights from the theories of 2-dimensional fibrations and 2-dimensional factorization systems.
					
					Abelian categories have the following fibrational expression of some of their homological properties: the opfibration of subobjects is isomorphic to the fibration of quotients. In fact, as shown in \cite{JanWei16}, this property is characteristic to Grandis exact categories, which provide an abstract context for capturing good behaviour of exact sequences in abelian categories. In this paper we develop a 2-dimensional analogue of this fibrational approach to exactness, which allows us to introduce a notion of a Grandis exact category in dimension 2. In a forthcoming joint work with \"Ulo Reimaa \cite{JanRei24}, we prove that the 2-category of
					abelian categories (with suitably chosen morphisms) is an example of our notion.
					
					In the pointed case, we obtain a 2-dimensional notion of Puppe exactness that we then compare with similar notions developed by M.~Dupont \cite{Dup08} and H.~Nakaoka \cite{Nak08,Nak10} in the context of categories enriched over pointed groupoids. To do so, we introduce a slightly weaker notion of 2-dimensional Puppe exactness that only requires a weakly closed 2-ideal instead of a closed one.
					This notion generalizes most of the $2$-dimensional notions of exactness developed in \cite{Dup08,Nak08,Nak10}. Moreover, we show that Puppe exact 2-categories in this weaker sense still enjoy
					a $2$-dimensional version of the equivalence between subobjects and quotients; hence the same is true for most of the contexts and all the examples considered in \cite{Dup08,Nak08,Nak10}, and in particular for the $2$-category of symmetric categorical groups and that of 2-dimensional vector spaces in the sense of Baez and Crans \cite{BaezCrans}. We also note that our theory expands the class of examples to include all $1$-dimensional abelian categories.

					\section{2-ideals, 2-kernels, 2-cokernels}\label{sectiontwoideals}
					
					A key tool for our investigation will be a notion of a 2-dimensional ideal (of null morphisms and null 2-cells). This will be a 2-dimensional analogue of the well known concept of an ideal of null morphisms (see \cite{Ehr64}). In this section, we propose a notion of 2-ideal, via a profunctor approach. We then explicitly characterize such a notion in terms of a class of null morphisms and a class of null 2-cells that satisfy 2-dimensional analogues of the familiar condition of closure under composition with morphisms in the category.
					
					Moreover, we introduce 2-dimensional kernels and cokernels, with respect to a 2-ideal. In the particular case of strictly described categories enriched over pointed groupoids, our notions recover the known notions of 2-kernel and 2-cokernel (see \cite{Dup08,Nak08}).
					
					\begin{remark}
						We notice that the notion of ideal (of null morphisms) in a category can be abstractly captured via a profunctor approach. Indeed, an ideal of null morphisms $\N$ in a 1-category $\C$ is the same as a subprofunctor $N$ of the profunctor $\Hom{-}{-}$, or more precisely a pair $(N,\nu)$ where $N\: \C\op \x \C \to \Set$ is a profunctor and $\nu$ is a natural transformation
						\begin{cd}
							{\C\op \x \C} \arrow[r, "N"{name=R}, bend left=30]  \arrow[r, "\Hom{-}{-}"'{name=S}, bend right=30] \arrow[from=R, to =S, Rightarrow, shorten <= 1.5ex, shorten >= 1.5ex, "\nu"] \& {\Set}
						\end{cd}
						with injective components. Notice that the functoriality conditions together with the naturality of $\nu$ give the usual conditions of closure under composition. This is similar to the common practice of viewing sieves as subfunctors of representable presheaves.
					\end{remark}
					
					This then motivates the following definition.
					
					\begin{definition}\label{deftwoideal}
						A \dfn{2-ideal} $\N$ (of null morphisms and null 2-cells) in a 2-category $\L$ is a pair $(N, \nu)$ where $N\:\L \op \x \L \to \Cat$ is a normal pseudofunctor, i.e.\ a 2-dimensional profunctor, and $\nu$ is an injective on objects and faithful pseudo natural transformation
						\begin{cd}
							{\L\op \x \L} \arrow[r, "N"{name=R}, bend left=30]  \arrow[r, "\Hom{-}{-}"'{name=S}, bend right=30] \arrow[from=R, to =S, Rightarrow, shorten <= 1.5ex, shorten >= 1.5ex, "\nu"] \& {\Cat.}
						\end{cd}
					\end{definition}
					
					\begin{remark}
						In the notations of \defx\ref{deftwoideal}, for every $A,B\in \L$, the category $N(A,B)$ represents the category of null morphisms from $A$ to $B$ and null 2-cells between them. Pseudonaturality of $\nu$ captures the following property: given a null morphism $n\:A\anull{} B$ and morphisms $a\:A'\to A$, $b\:B\to B'$ in $\L$, the composite $b\c n\c a$ is isomorphic to a null morphism. The composite itself may not be null, but there exists some replacement, isomorphic to it, which is null. And this replacement is given by $N(a,b)(n)$. This is similar to the notion of bisieve, introduced by Street in \cite{Str82}. A difference from \cite{Str82} is that we do not take all 2-cells to be null. Our $\nu$ is indeed not full in general. This is essential to capture the expected properties of the 2-ideal $(0)$ in the 2-pointed case, see \exax\ref{zero2ideal}. Indeed, we want $N(A,B)$ to be equivalent to the singleton category in the 2-pointed case, exactly as $N(A,B)$ is isomorphic to the singleton set in the $1$-pointed case.
					\end{remark} 
					
					We can now characterize 2-ideals in terms of a class of null morphisms and a class of null 2-cells
					
					\begin{theorem}\label{theorchar2ideal}
						Let $\L$ be a 2-category. A $2$-ideal $\N$ in $\L$ (of null morphisms and null $2$-cells) can be equivalently given as
						\begin{itemize}
							\item[-] a class of morphisms, called null morphisms, and a class of 2-cells between them, called null 2-cells such that the identity 2-cell on a null morphism is a null 2-cell and null 2-cells are closed under vertical composition;
							\item[-] for every null morphism $A \anull{n} B$ and every pair $(a,b)$ with $A'\aar{a} A$ and $B \aar{b} B'$ morphisms in $\L$, a null morphism $A' \anull{\widetilde{b \c n\c a}}B'$ and an isomorphic 2-cell 
							\begin{cd}
								{A'} \arrow[r,"a"]  \arrow[rrr,"\widetilde{b \c n \c a}"',bend right=30, null] \& {A} \arrow[r,"n",null] \arrow[r,"\nu_{a,b,n}"',iso, shift right= 2.3ex] \& {B} \arrow[r,"b"] \& {B'}
							\end{cd}
						\end{itemize}
						such that
						\begin{itemize}
							\item[$(1)$] for every null morphism $A \anull{n} B$ we have $\widetilde{\id{} \c n \c \id{}} =n$ and $\nu_{\id{},\id{},n}=\id{}$;
							\item[$(2)$] for every null 2-cell $\mu\:n \Rightarrow n'$ between any null morphisms $n,n': A \anull{} B$ and for every pair of morphism $(a,b)$ with $A'\aar{a} A$ and $B \aar{b} B'$ the 2-cell
							\begin{cd}
								{A'} \arrow[r,"a"] \arrow[rrr,"\widetilde{b \c n \c a}", bend left=47, null] \arrow[rrr,"\widetilde{b \c n' \c a}"', bend right=48.5, null]\& {A} \arrow[r,"{\hspace{3.05em}\nu^{-1}_{a,b,n}}"'{description, sep=1ex}, iso, shift left=4.75ex] \arrow[r,"{\hspace{3.35em}\nu_{a,b,n'}}"'{description,sep=2.5ex}, iso, shift right=5ex]\arrow[r,"n"{name=B}, null, bend left] \arrow[r,"n'"'{name=A}, null, bend right] \arrow[from=B, to=A,"\mu"',Rightarrow, shorten <= 1ex, shorten >= 1ex]\& {B} \arrow[r,"b",] \& {B'}
							\end{cd}
							is a null 2-cell;
							\item[$(3)$] for every null morphism $A \anull{n} B$ and every pair of 2-cells $(\alpha,\beta)$ with $\alpha\: a \Rightarrow a' \: A' \to A$ and $\beta\: b\Rightarrow b' \: B \to B'$ the 2-cell
							\begin{cd}
								{A'} \arrow[r,"a"{name=D}, bend left] \arrow[r,"a'"'{name=C}, bend right] \arrow[rrr,"\widetilde{b \c n \c a}", bend left=43, null] \arrow[rrr,"\widetilde{b' \c n \c a'}"', bend right=44.5, null]\& {A} \arrow[r,"{\hspace{3.35em}\nu^{-1}_{a,b,n}}"'{description, sep=1ex}, iso, shift left=4ex] \arrow[r,"{\hspace{3.35em}\nu_{a',b',n}}"'{description,sep=2.5ex}, iso, shift right=4ex]\arrow[r,"n"{name=B}, null]  \arrow[from=D, to=C,"\alpha"',Rightarrow, shorten <= 1ex, shorten >= 1ex]\& {B} \arrow[r,"b"{name=E},bend left] \arrow[r,"b'"'{name=F},bend right] \arrow[from=E, to=F,"\beta"',Rightarrow, shorten <= 1ex, shorten >= 1ex]\& {B'}
							\end{cd}
							ia a null 2-cell;
							\item[$(4)$] for every null morphism $A \anull{n} B$ and every two pairs $(a,b)$, $(a',b')$ with $A'\aar{a} A$, $A''\aar{a'} A'$, $B \aar{b} B'$, $B'\aar{b'} B''$ morphisms in $\L$ the 2-cell
							\begin{cd}
								{A''} \arrow[r,"a'"] \arrow[rrrrr, bend left=41.5,null] \arrow[rrrrr, bend right=23,null]  \& {A'} \arrow[rrr, bend left=32,null] \arrow[r,"a"'] \& {A} \arrow[r, "n"'{inner sep=0.8ex}, null] \arrow[r,"{\hspace{4.65em}\nu^{-1}_{a',b',\widetilde{b \c n \c a}}}"'{description,sep=2.5ex}, iso, shift left=7.65ex] \arrow[r,"{\hspace{3.35em}\nu^{-1}_{a,b,n}}"'{description,sep=2.5ex}, iso, shift left=2.65ex] \arrow[r,"{\hspace{4.65em}\nu_{a\c a',b'\c b,n}}"'{description,sep=2.5ex}, iso, shift right=3.5ex] \& {B} \arrow[r, "b"'] \& {B'} \arrow[r, "b'"] \& {B''}
							\end{cd}
							is an invertible null 2-cell.
						\end{itemize}
					\end{theorem}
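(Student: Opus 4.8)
The plan is to prove the equivalence by directly unpacking Definition~\ref{deftwoideal}, in which a $2$-ideal is a normal pseudofunctor $N\colon\L\op\x\L\to\Cat$ together with a pseudonatural transformation $\nu\colon N\Rightarrow\Hom{-}{-}$ whose components $\nu_{A,B}\colon N(A,B)\to\HomC{\L}{A}{B}$ are injective on objects and faithful, and where $\Hom{-}{-}$ carries its (strict) hom-$2$-functor structure. Starting from such an $(N,\nu)$, injectivity on objects and faithfulness of each $\nu_{A,B}$ let me identify $N(A,B)$ with the (in general non-full) subcategory of $\HomC{\L}{A}{B}$ spanned by its image; its objects are then declared to be the null morphisms $A\to B$ and its morphisms the null $2$-cells, and the mere fact that $N(A,B)$ is a category gives that identities on null morphisms are null and that null $2$-cells are closed under vertical composition. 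Setting $\widetilde{b\c n\c a}:=N(a,b)(n)$ and letting $\nu_{a,b,n}$ be the component at $n$ of the structural invertible $2$-cell of $\nu$ at the $1$-cell $(a,b)$ supplies the remaining elementary data, with $\nu_{a,b,n}$ an isomorphism $b\c n\c a\Rightarrow\widetilde{b\c n\c a}$ as required. Then $(1)$ is normality of $N$ together with the unit coherence of $\nu$ (which forces $\nu_{\id{},\id{},n}=\id{}$, since $N$ and $\Hom{-}{-}$ both preserve identities strictly); $(2)$ records that $N(a,b)$ sends the null $2$-cell $\mu$ to a null $2$-cell, its explicit form being naturality of the structural cell $\nu_{(a,b)}$; $(3)$ records that the action of $N$ on the $2$-cell $(\alpha,\beta)$ is a natural transformation with null components, its explicit form being the coherence of the structural cells of $\nu$ with $2$-cells; and $(4)$ records that the compositor of $N$ has invertible null components, its explicit form being the coherence with composition of $1$-cells.

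Conversely, from elementary data satisfying $(1)$--$(4)$ I would reconstruct $(N,\nu)$. Take $N(A,B)$ to be the category with the null morphisms as objects and the null $2$-cells as morphisms --- a category by the stated closure properties, with the composition and units of $\L$ --- and let $\nu_{A,B}\colon N(A,B)\hookrightarrow\HomC{\L}{A}{B}$ be the inclusion, which is automatically injective on objects and faithful. On a $1$-cell $(a,b)$, put $N(a,b)(n):=\widetilde{b\c n\c a}$ and send a null $2$-cell $\mu$ to the $\nu$-conjugate of the whiskered $2$-cell $b\c\mu\c a$ displayed in $(2)$; this lands among null $2$-cells by $(2)$ and, by functoriality of whiskering in $\L$, defines a functor $N(A,B)\to N(A',B')$. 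On a $2$-cell $(\alpha,\beta)$ let $N(\alpha,\beta)$ have the components displayed in $(3)$, which are null by $(3)$, assemble into a natural transformation by interchange, and depend $2$-functorially on $(\alpha,\beta)$ again by interchange. Take the compositor $N(a',b')\c N(a,b)\Rightarrow N(a\c a',b'\c b)$ to be the pasting of $\nu$'s displayed in $(4)$: it is an invertible null $2$-cell by $(4)$ and natural by $(2)$ and interchange; normality of $N$ is $(1)$; and $\nu_{a,b,n}$ serves as the structural cell of $\nu$ at $(a,b)$. The remaining pseudofunctor coherences for $N$ and pseudonaturality coherences for $\nu$ then come essentially for free: the $\nu_{A,B}$ are faithful inclusions, so each such axiom is an equation of $2$-cells in $\L$; and since $N$ on $1$-cells, $N$ on $2$-cells, and the compositors of $N$ all become, after applying $\nu$, the explicit expressions of $(2)$--$(4)$ assembled from the $\nu_{a,b,n}$'s and whiskering, that equation coincides with the corresponding (trivially valid) identity for the strict hom-$2$-functor $\Hom{-}{-}$. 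Finally, the two passages are mutually inverse up to isomorphism of $2$-ideals: on the elementary side nothing needs reconciling, and on the $2$-ideal side the only discrepancy is that $N(A,B)$ has been replaced by its isomorphic image under $\nu_{A,B}$, which can be absorbed.

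The forward direction is a mechanical dictionary, so I expect the main obstacle to be the backward direction. The organising idea there is that, once the structure maps of $N$ are transported along the faithful components $\nu_{A,B}$, the only conditions not already forced by the strictness of $\Hom{-}{-}$ are that these transported structure maps again consist of null morphisms and null $2$-cells --- which is precisely what $(2)$, $(3)$ and $(4)$ assert. What still takes care is the bookkeeping: one repeatedly uses the interchange law, together with the fact that $\nu_{a,b,n}$ depends only on $(a,b,n)$ so that it cancels against its own inverse, to see that each coherence equation collapses, after applying $\nu$, to an evident identity in $\L$. I expect this interchange bookkeeping, rather than any conceptual difficulty, to be where the real work lies.
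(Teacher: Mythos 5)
Your proposal is correct and follows essentially the same route as the paper's own proof: identify $N(A,B)$ with the subcategory of null morphisms and null 2-cells via the injective-on-objects faithful components of $\nu$, read off $(1)$--$(4)$ as the pseudofunctoriality of $N$ and the (pseudo)naturality of $\nu$, and in the converse direction use faithfulness of the inclusions to reduce all coherence axioms to identities in the hom-categories of $\L$. Your write-up is in fact somewhat more explicit than the paper's about why the remaining coherences come for free.
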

					\begin{proof}
						Consider the $2$-ideal $\N=(N,\nu)$ in $\L$. For every $(A,B)\in \L \x \L\op$, the normal pseudofunctor $N$ gives a subcategory $N(A,B)$ of the hom-category $\Hom{A}{B}$ consisting of null morphisms and null 2-cells between them. This means that the the identity 2-cell on a null morphism is a null 2-cell and null 2-cells are closed under vertical composition. Moreover, given morphisms $A'\aar{a}A$ and $B \aar{b} B'$ in $\L$, the pseudonatural transformation $\mu$ gives an invertible 2-cell
						\begin{cd}
							{N(A,B)} \arrow[r,iso, "\nu_{a,b}"', shift right= 5ex] \arrow[d,"{N(A,B)}"'] \arrow[r, hook] \& {\Hom{A}{B}} \arrow[d,"{b \c - \c a}"] \\
							{N(A',B')} \arrow[r, hook] \& {\Hom{A'}{B'}.}
						\end{cd}
						It is then straightforward to check that the naturality of $\nu_{a,b}$ corresponds to (2). Moreover, it is straightforward to prove that the pseudonaturality of $\nu$ is equivalent to conditions (1), (3) and (4). In particular, condition (1) and (4) are given by the fact that the components of $\nu$ respect identities and composition while condition (3) is the compatibility with 2-cells of the components of $\nu$. 
						
						Conversely, given a class of morphisms, a class of 2-cells and isomorphic null 2-cells satisfying the axioms from (1) to (4), for every $A,B\in \L$ we define $N(A,B)$ to be the category consisting of the the morphisms from $A$ to $B$ that are in the chosen class of morphisms and the 2-cells between them that are in the chosen class of 2-cells. Such $N(A,B)$ is a subcategory of $\Hom{A}{B}$, so we have an injective on objects and faithful functor $N(A,B)\to\Hom{A}{B}$. Given morphisms $A'\aar{a}A$ and $B \aar{b} B'$ in $\L$ and a null morphism $A \anull{n} B$, we define $N(a,b)(n)$ as $\widetilde{b \c n \c a}$ and $(\nu_{a,b})_n$ as $\nu_{a,b,n}$. Moreover, given a 2-cell $\mu\: n \rightarrow n'$, we define $N(a,b)(\mu)$ as the 2-cell of condition (2). And given a null morphism $A \anull{n} B$ and a pair of 2-cells $(\alpha,\beta)$ with $\alpha\: a \Rightarrow a' \: A' \to A$ and $\beta\: b\Rightarrow b' \: B \to B'$, we define the 2-cell $N(\alpha,\beta)_n$ as the 2-cell of condition (3). Condition (4) then gives the isomorphisms that regulate the pseudofunctoriality of $N$. Finally, it is straightforward to prove that $\nu$ is a pseudonatural transformation. So $(N,\nu)$ is a 2-ideal in the sense of \defx\ref{deftwoideal}.
					\end{proof}
					
					We now aim at describing the 2-ideal canonically associated to a 2-pointed 2-category.
					
					\begin{definition}
						We call a 2-category $\L$ \dfn{2-pointed} if $\L$ has a biterminal object which is also a biinitial object. We will call such object \dfn{bizero object} and denote it as $0$.
					\end{definition}
					
					\begin{remark}
						A bizero object $0$ in a 2-category $\L$ is such that, for every $A\in \L$ both the hom-categories
						$\HomC{\L}{A}{0}$ and $\HomC{\L}{0}{A}$ are equivalent to the terminal category. This means that, for every $A\in \L$, there exist morphisms $t\:A\to 0$ and $i\:0\to A$ (which need not be unique), and given any two morphisms $t,t'\:A\to 0$ (or any two morphisms $i,i'\:\0\to A$) there exists a unique isomorphic 2-cell between them.
					\end{remark}
					
					\begin{example}\label{zero2ideal}
						Let $\L$ be a 2-pointed 2-category, with a fixed choice $0$ of a representative for the bizero object. Then $\L$ has a canonical 2-ideal $(0)$ associated to it, given as follows. Null morphisms are precisely the ones that factor through $0$ and null 2-cells are precisely the ones induced by the 2-dimensional universal property of the bizero object as shown in the following diagram
						\begin{eqD}{nulltwocell}
							\begin{cd}*
								{} \& {0} \arrow[dd, equal] \arrow[rd,"i", bend left=20]\& {}\\[-5ex]
								{A}  \arrow[r,iso]\arrow[ru,"t", bend left=20] \arrow[rd,"t'"', bend right=20] \& {} \arrow[r,iso] \& {B.}\\[-5ex]
								{} \& {0} \arrow[ru,"i'"', bend right=20] \& {}
							\end{cd}
						\end{eqD}
						Given a null morphism $n=A\aar{t} 0 \aar{i} B$ and morphisms $A'\aar{a} A$ and $B \aar{b} B'$, we notice that the composite $A' \aar{a} A\aar{t} 0 \aar{i} B \aar{b} B'$ is null. So we define  $\widetilde{b \c n \c a}$ as such composite and $\nu_{a,b,n}$ as the identity 2-cell. It is then straightforward to check that these data satisfy the required axioms (see \thex\ref{theorchar2ideal}) using the universal properties of the bizero object.
						
						Notice that, for every $A,B\in \L$, we have that $(0)(A,B)$ is equivalent to the terminal category. Indeed, the functor from the terminal category to $(0)(A,B)$ that picks a fixed chosen morphism $A \aar{t} 0 \aar{i} B$ is an equivalence of categories since given any other null morphism $A \aar{t'} 0 \aar{i'} B$ the 2-cell of diagram (\ref{nulltwocell}) is an isomorphic null 2-cell.
						
						This is a 2-dimensional analogue of the fact that $N(A,B)$ is isomorphic to the singleton set in the 1-pointed case (as there is a unique null morphism $A\to 0\to B$ for every $A$ and $B$ in the 1-pointed case).
					\end{example}
					
					\begin{remark}\label{remadditionalaxiomsideal}
						There are several additional axioms that a 2-ideal of null morphisms and null 2-cells might satisfy. For example, one could consider to ask the components of $\nu$ to be full, conservative, isofibrations, or discrete opfibrations.
					\end{remark}
					
					\begin{remark}\label{remtwomonos}
						The additional axioms explored in \remx\ref{remadditionalaxiomsideal} correspond to possible 2-dimensional notions of monomorphism. Indeed, the components of the pseudonatural transformation $\nu$ of a 2-ideal should be 2-dimensional monomorphisms in $\Cat$, exactly as they are monomorphisms in $\Set$ (i.e.\ injections) in dimension 1. From the point of view of elementary topos theory, discrete opfibrations give a nice notion of subobjects in dimension 2, see \cite{Web07} and \cite{Mes25}. In many other examples, though, discrete opfibrations seem to be a bit too strong.
						
						Following Dupont and Vitale \cite{DupVit03}, we will consider the basic notion of monomorphism in dimension 2 given by faithful morphisms. Epimorphisms in dimension 2 will be given by cofaithful morphisms. In the terminology of \cite{DupVit03}, these correspond to $(1,1)$-proper. Considering other notions of monomorphisms and epimorphisms in dimension 2, one would get corresponding variations of our results. 
					\end{remark}
					
					We are ready to define 2-dimensional kernels and cokernels relative to a 2-ideal.
					
					\begin{definition}\label{deftwokernel}
						Let $\L$ be a 2-category and $\N=(N,\nu)$ be a 2-ideal of null morphisms and null 2-cells in $\L$. An \dfn{$\N$-2-kernel (or just 2-kernel)} of a morphism $f\:A\to B$ in $\L$ is a morphism $K \aar{k} A$ together with an isomorphic 2-cell
						\begin{cd}
							K\ar[rr,null,bend right=35,"n"'{inner sep=0.8ex}]\ar[rr,iso,shift right=2ex,"\alpha"'{pos=0.6}]\ar[r,"k"] \& A \ar[r,"f"] \& B
						\end{cd}
						with $n$ a null morphism, that is universal in the following sense:
						\begin{itemize}
							\item[$(1)$] for every morphism $Z \aar{z} A$ such that $f \c z$ is isomorphic to a null morphism via an isomorphism $\beta$, there exist a morphism $Z \aar{u} K$ and an isomorphic 2-cell $\gamma\: z \Rightarrow k \c u$ such that the pasting
							\begin{cd}
								{Z} \arrow[rrrd,""{name=C}, null, bend right=69] \arrow[rd,"{\exists u}"'{name=A}, dashed] \arrow[rrd,"z"{name=B}, bend left=20] \arrow[rrrd,"", null, bend left] \&[-2ex] {} \&[-2ex] {} \&[-2ex] {}\\[-4ex]
								{} \& {K} \arrow[r,"{\exists \gamma}"'{pos=0.65}, iso, shift left= 3ex, start anchor={[xshift=-8ex]}] 
								\arrow[r,"{\nu}"'{pos=0.65}, iso, shift right= 2.8ex, start anchor={[xshift=-8ex]}] 
								\arrow[r,"k"] \arrow[rr,""{name=D}, bend right=42, null] \& {A} \arrow[r,"{\beta}"'{pos=0.65}, iso, shift left= 3.3ex, start anchor={[xshift=-8ex]}] \arrow[r,"{\alpha}"'{pos=0.65}, iso, shift right= 2ex, start anchor={[xshift=-8ex]}]  \arrow[r,"f"] \& {B}
							\end{cd}
							is an invertible null 2-cell;
							\item[$(2)$] for every morphisms $u,v\: Z \to K$ and every 2-cell $\lambda\: k\c u \Rightarrow k \c v$ such that the 2-cell $\Xi^\lambda$ given by the following pasting
							\begin{cd}
								{Z} \arrow[rrr,"", null, bend right=60] \arrow[rrr,"", null, bend left=50]\arrow[r,"u"] \arrow[rd,"v"'] \& {K} \arrow[r,"{\nu}"'{pos=0.65}, iso, shift left= 4.8ex, start anchor={[xshift=-8ex]}] \arrow[d,"\lambda", Rightarrow, shorten <= -0.2ex, shorten >= -0.2ex] \arrow[r,"k"] \arrow[rr,"", null, bend left=35] \arrow[rr,"{\alpha^{-1}}"'{pos=0.65, inner sep=0.1ex}, iso, shift left= 2.5ex] \arrow[rr,"{\alpha}"'{pos=0.65, inner sep=0.1ex}, iso, shift right= 2.5ex]\& {A} \arrow[r,"f"] \& {B}\\[-4.5ex]
								{} \& {K} \arrow[r,"{\nu}"'{pos=0.75}, iso, shift right= 2.1ex, start anchor={[xshift=-0.2ex]}] \arrow[ru,"k"'] \arrow[rru,"", null, bend right=20] \& {} \& {}
							\end{cd}
							is a null 2-cell, there exists a unique 2-cell $\mu\: u \Rightarrow v$ such that $k \star \mu = \lambda$.
						\end{itemize}

						Dually, we can define \predfn{$\N$-2-cokernels}. Explicitly, the 1-dimensional universal property is exhibited by the following diagram
						\begin{cd}
							{} \&[-2ex] {} \&[-2ex] {} \&[-2ex] {Z}\\[-4ex]
							{A} \arrow[rr,"{\hspace{2ex}\alpha}"'{pos=0.57, inner sep=0.01ex}, iso, shift right= 1.5ex, start anchor={[xshift=-0.1ex]}] \arrow[rr,"{\hspace{2ex}\beta}"'{pos=0.57, inner sep=0.001ex}, iso, shift left= 4.5ex, start anchor={[xshift=-0.1ex]}] \arrow[rrru,"", null, bend left=42] \arrow[rr,"", null, bend right=30] \arrow[rrru,"", null, bend right=72] \arrow[r,"f"] \& {B} \arrow[r,"{\hspace{2ex}\nu}"'{pos=0.99}, iso, shift right= 3.1ex, start anchor={[xshift=3.5ex]}] \arrow[r,"{\hspace{2ex}\gamma}"'{pos=0.99}, iso, shift left= 2.1ex, start anchor={[xshift=3.2ex]}] \arrow[rru,"z", bend left=20]\arrow[r,"c"] \& {C} \arrow[ru,"{\exists u}"', dashed] \& {}
						\end{cd}
						and analogously for the 2-dimensional universal property. 
					\end{definition}
					
					\begin{proposition} \label{propkerarefaith}
						$\N$-2-kernels are faithful morphisms, i.e.\ 2-dimensional monomorphisms (see \remx\ref{remtwomonos}). Dually, $\N$-2-cokernels are cofaithful morphisms, i.e.\ 2-dimensional epimorphisms.
					\end{proposition}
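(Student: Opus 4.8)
The plan is to deduce faithfulness of $k$ directly from the uniqueness clause in the $2$-dimensional universal property of the $2$-kernel, i.e.\ part $(2)$ of \defx\ref{deftwokernel}. Recall that, in the sense of \remx\ref{remtwomonos}, a morphism $k\:K\to A$ is faithful exactly when for every object $Z$ the functor $k\c-\:\HomC{\L}{Z}{K}\to\HomC{\L}{Z}{A}$ is faithful; equivalently, for all $u,v\:Z\to K$ and all $2$-cells $\mu,\mu'\:u\Rightarrow v$, the equality $k\star\mu=k\star\mu'$ forces $\mu=\mu'$. So I would fix such data and set $\lambda:=k\star\mu=k\star\mu'\:k\c u\Rightarrow k\c v$, with the aim of showing that $\mu$ and $\mu'$ are \emph{both} the unique $2$-cell provided by \defx\ref{deftwokernel}$(2)$ applied to $\lambda$.

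The one hypothesis that must be checked before invoking \defx\ref{deftwokernel}$(2)$ is that the $2$-cell $\Xi^{\lambda}$ is null; this is where $\lambda=k\star\mu$ gets used. Whiskering by $f$ gives $f\star\lambda=(f\c k)\star\mu$; the defining isomorphism $\alpha$ of the $2$-kernel (with $n$ the chosen null morphism, say $\alpha\:f\c k\Rightarrow n$, the other orientation being symmetric) satisfies the interchange identity $(\alpha\star v)\c\bigl((f\c k)\star\mu\bigr)=(n\star\mu)\c(\alpha\star u)$; and substituting this into the pasting that defines $\Xi^{\lambda}$ cancels the pair $\alpha,\alpha^{-1}$ and leaves precisely the $2$-cell obtained from $n\star\mu$ by conjugating with the structural isomorphisms $\nu_{u,\id{},n}$ and $\nu_{v,\id{},n}$ of the $2$-ideal. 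By condition $(3)$ of \thex\ref{theorchar2ideal}, applied to the null morphism $n$ with the pair of $2$-cells $(\mu,\id{})$, this conjugated $2$-cell is a null $2$-cell. Hence $\Xi^{\lambda}$ is null.

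With $\Xi^{\lambda}$ known to be null, \defx\ref{deftwokernel}$(2)$ yields a \emph{unique} $2$-cell $u\Rightarrow v$ whose whiskering with $k$ equals $\lambda$. Both $\mu$ and $\mu'$ have this property by construction, so $\mu=\mu'$; this proves that $k$ is faithful, and in particular that $\N$-$2$-kernels are $2$-dimensional monomorphisms. The statement for $\N$-$2$-cokernels follows by running exactly the same argument in $\L\op$, where an $\N$-$2$-cokernel becomes an $\N\op$-$2$-kernel, the structural isomorphisms $\nu$ dualize accordingly, and ``faithful'' turns into ``cofaithful''.

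The routine-but-fiddly part, and the only real obstacle, is the second paragraph: one has to unwind the pasting diagram defining $\Xi^{\lambda}$, keep careful track of the orientations of $\alpha$, of $\lambda$, and of the structural isomorphisms $\nu$, and then match the outcome against the $2$-cell displayed in \thex\ref{theorchar2ideal}$(3)$. Conceptually nothing new is needed beyond two facts already available: whiskering a null morphism by an arbitrary $2$-cell yields (a replacement of) a null $2$-cell, and the $2$-kernel's $2$-dimensional universal property comes with a uniqueness clause.
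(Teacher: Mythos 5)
Your proposal is correct and follows exactly the route the paper intends: the paper's proof is simply ``Straightforward using the 2-dimensional universal property of $\N$-2-kernels,'' and your argument supplies the missing details of that approach — in particular the verification, via interchange and condition $(3)$ of \thex\ref{theorchar2ideal}, that $\Xi^{k\star\mu}$ is always a null $2$-cell, so that the uniqueness clause of \defx\ref{deftwokernel}$(2)$ applies.
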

					\begin{proof}
						Straightforward using the 2-dimensional universal property of $\N$-2-kernels.
					\end{proof}
					
					\begin{lemma}
						In the notations of \defx\ref{deftwokernel}, if $\lambda$ is an isomorphic 2-cell and $\Xi^\lambda$ is an invertible null 2-cell, then the 2-cell $\mu$ induced by the 2-dimensional universal property of the 2-kernel is an isomorphic 2-cell.
						
						Dually for 2-cokernels. 
					\end{lemma}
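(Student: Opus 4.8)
Write $k\: K\to A$ for the given $\N$-2-kernel of $f\: A\to B$, and recall that $\mu\: u\Rightarrow v$ is the 2-cell obtained from the 2-dimensional universal property (\defx\ref{deftwokernel}, condition~$(2)$) applied to the isomorphic 2-cell $\lambda\: k\c u\Rightarrow k\c v$, so that $k\star\mu=\lambda$. The plan is to apply that same universal property to $\lambda^{-1}$ in order to produce a candidate inverse $\mu'\: v\Rightarrow u$, and then to verify that $\mu'$ inverts $\mu$ by using that $\N$-2-kernels are faithful (\prox\ref{propkerarefaith}).

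First I would check that $\lambda^{-1}\: k\c v\Rightarrow k\c u$ meets the hypothesis of \defx\ref{deftwokernel}, condition~$(2)$, with the roles of $u$ and $v$ interchanged: namely, that the pasting $\Xi^{\lambda^{-1}}$ associated to $v$, $u$, $\lambda^{-1}$ is a null 2-cell. To see this, I would inspect the pasting that defines $\Xi^{(-)}$ and observe that every 2-cell occurring in it other than $\lambda$ --- the structure isomorphism $\alpha$ of the 2-kernel, its inverse, and the relevant components of the pseudonatural transformation $\nu$ of the 2-ideal --- is a fixed invertible 2-cell in a fixed position, while $\lambda$ itself enters through a single occurrence (post-whiskered by $f$). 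Consequently, interchanging $u$ and $v$ and replacing $\lambda$ by $\lambda^{-1}$ yields precisely the inverse pasting, i.e.\ $\Xi^{\lambda^{-1}}=(\Xi^\lambda)^{-1}$. Since by hypothesis $\Xi^\lambda$ is an \emph{invertible} null 2-cell, its inverse is again a null 2-cell; hence $\Xi^{\lambda^{-1}}$ is a null 2-cell, and \defx\ref{deftwokernel}, condition~$(2)$, provides a (unique) 2-cell $\mu'\: v\Rightarrow u$ with $k\star\mu'=\lambda^{-1}$. (Note that invertibility of $\Xi^\lambda$ in $\L$ is automatic once $\lambda$ is invertible, so the force of the hypothesis is exactly that the inverse of $\Xi^\lambda$ is still null.)

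It then remains to see that $\mu'$ is a two-sided inverse of $\mu$. Post-whiskering by $k$ preserves vertical composition and identities, so $k\star(\mu'\c\mu)=(k\star\mu')\c(k\star\mu)=\lambda^{-1}\c\lambda=\id{k\c u}=k\star\id{u}$ and, symmetrically, $k\star(\mu\c\mu')=\id{k\c v}=k\star\id{v}$. Because $k$ is faithful (\prox\ref{propkerarefaith}), post-whiskering by $k$ is injective on 2-cells between a fixed pair of $1$-cells, whence $\mu'\c\mu=\id{u}$ and $\mu\c\mu'=\id{v}$; thus $\mu$ is an isomorphic 2-cell. The statement for $\N$-2-cokernels is handled dually, replacing faithfulness of $\N$-2-kernels by cofaithfulness of $\N$-2-cokernels.

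I expect the only real work to be in the second paragraph: unwinding the pasting that defines $\Xi^{(-)}$ carefully enough to justify the identity $\Xi^{\lambda^{-1}}=(\Xi^\lambda)^{-1}$, which in particular requires checking that the auxiliary replacement null morphisms and the $\nu$-components attached to $k\c u$ and $k\c v$ recombine correctly under the interchange of $u$ and $v$. Everything downstream of that identity is formal.
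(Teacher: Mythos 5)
Your proposal is correct and follows essentially the same route as the paper: apply the 2-dimensional universal property to $\lambda^{-1}$ to obtain a candidate inverse $\mu'$, then use faithfulness of $k$ (\prox\ref{propkerarefaith}) to conclude that $\mu'$ inverts $\mu$. Your second paragraph, verifying that $\Xi^{\lambda^{-1}}=(\Xi^\lambda)^{-1}$ is null so that the universal property actually applies to $\lambda^{-1}$, is a detail the paper leaves implicit, and it is precisely where the hypothesis that $\Xi^\lambda$ is an \emph{invertible} null 2-cell is used.
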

					\begin{proof}
						Applying the 2-dimensional universal property of the 2-kernel to the 2-cell $\lambda^{-1}\: k \c v \Rightarrow k \c u$, we construct a 2-cell $\mu'\: v \Rightarrow u$ such that $k \star \mu'=\lambda^{-1}$. This implies $k \star (\mu' \c \mu)=\lambda^{-1}$ and analogously $k \star (\mu \c \mu')=\lambda^{-1}$. Since $k$ is faithful by \prox\ref{propkerarefaith}, we conclude that $\mu'$ is the inverse of $\mu$.
					\end{proof}
					
					\begin{proposition}
						$\N$-2-kernels and $\N$-2-cokernels are uniquely determined up to equivalence. More precisely, given two $\N$-kernels $K\aar{k} A$ and $K'\aar{k'} A$ of a morphism $f\:A\to B$ in $\L$, there exist an equivalence $j\:K\to K'$ in $\L$ and an isomorphic 2-cell
						\tr[3][5][0][0][i][-0.6][\gamma]{K}{K'}{A}{j}{k}{k'}
						such that $(j,\gamma)$ is an equivalence in the pseudo slice 2-category over $A$ (i.e.\ $j$ is an equivalence over $A$, taking into account the isomorphic 2-cell $\gamma$).
					\end{proposition}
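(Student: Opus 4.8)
The plan is to run a back-and-forth argument: the $1$-dimensional universal property yields comparison $1$-cells in both directions, and the $2$-dimensional universal property, together with the Lemma preceding this statement, shows that the two round trips are isomorphic to identities in a way compatible with the slice structure.

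First I would apply the $1$-dimensional universal property of the $\N$-$2$-kernel $K'\aar{k'}A$ to the morphism $K\aar{k}A$. Since the structure isomorphism of $K$ exhibits $f\c k$ as isomorphic to a null morphism, this produces a morphism $j\:K\to K'$, an isomorphic $2$-cell $\gamma\:k\Rightarrow k'\c j$, and the invertible null $2$-cell of clause $(1)$ of \defx\ref{deftwokernel}; call the latter $P$. Symmetrically, applying the $1$-dimensional universal property of $K\aar{k}A$ to $K'\aar{k'}A$ (using the structure isomorphism of $K'$) produces $j'\:K'\to K$, an isomorphic $2$-cell $\gamma'\:k'\Rightarrow k\c j'$, and an invertible null $2$-cell $P'$.

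Next I would show that $j'\c j$ is isomorphic to $\id{K}$ over $A$. Set $\lambda=(\gamma'\star j)\c\gamma$, an isomorphism $k=k\c\id{K}\Rightarrow k\c(j'\c j)$. The crucial point is to check that the $2$-cell $\Xi^{\lambda}$ of clause $(2)$ of \defx\ref{deftwokernel} is a null $2$-cell; it is automatically invertible, since $\lambda$ and all the structure isomorphisms occurring in it are. I expect this verification to be the main obstacle: one has to paste $P$ and $P'$ along $A\aar{f}B$, rewrite using the defining equations of $\gamma$ and $\gamma'$ and the $2$-ideal axioms of \thex\ref{theorchar2ideal}, and then conclude by closure of null $2$-cells under vertical composition and under the functors $N(a,b)$. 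The bookkeeping is delicate because one must simultaneously track the various structure isomorphisms (the two kernel $2$-cells and the relevant components of the pseudonatural transformation $\nu$) and the fact that the null morphisms underlying the two $2$-kernels need not coincide. Granting this, clause $(2)$ of \defx\ref{deftwokernel} yields a unique $2$-cell $\mu\:\id{K}\Rightarrow j'\c j$ with $k\star\mu=\lambda$, and the preceding Lemma shows $\mu$ is an isomorphism. Running the same argument with $K$ and $K'$ interchanged produces an isomorphic $2$-cell $\mu'\:\id{K'}\Rightarrow j\c j'$ with $k'\star\mu'=(\gamma\star j')\c\gamma'$.

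Finally I would package these data as an equivalence in the pseudo slice $2$-category over $A$. In that $2$-category, $(j,\gamma)$ is a $1$-cell $(K,k)\to(K',k')$ and $(j',\gamma')$ is a $1$-cell in the opposite direction, with composites $(j'\c j,(\gamma'\star j)\c\gamma)$ and $(j\c j',(\gamma\star j')\c\gamma')$. The identities $k\star\mu=(\gamma'\star j)\c\gamma$ and $k'\star\mu'=(\gamma\star j')\c\gamma'$ say precisely that $\mu$ and $\mu'$ are (invertible) $2$-cells of the pseudo slice relating these composites to the identity $1$-cells $(\id{K},\id{k})$ and $(\id{K'},\id{k'})$. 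Hence $(j,\gamma)$ is an equivalence over $A$, and in particular $j$ is an equivalence in $\L$. The assertion about $\N$-$2$-cokernels follows by reading everything above in $\L\op$.
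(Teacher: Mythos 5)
Your proposal is correct and is exactly the standard argument the paper has in mind (its own proof is just ``straightforward using the universal properties''): comparison $1$-cells from the $1$-dimensional universal property in both directions, invertible $2$-cells $\mu$, $\mu'$ from the $2$-dimensional universal property together with the preceding Lemma, and the observation that $k\star\mu=\lambda$ is precisely the compatibility making $\mu$ a $2$-cell of the pseudo slice. The one step you flag as delicate --- verifying that $\Xi^{\lambda}$ is null by pasting the invertible null $2$-cells $P$ and $P'$ and using the $2$-ideal axioms --- is indeed where the work lies, and your plan for it is the right one.
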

					\begin{proof}
						Straightforward using the universal properties of $\N$-2-kernels and $\N$-2-cokernels.
					\end{proof}
					
					We show that in the 2-pointed case, 2-kernels are bilimits. More precisely, they are biisoinserters. This is a 2-dimensional analogue of the fact that in the 1-pointed case kernels are equalizers (of the morphism with the zero morphism).
					
					We first recall the definition of the biisoinserter.
					
					\begin{definition}
						Let $\L$ be a 2-category and let $f,g\:A\to B$ be two parallel morphisms in $\L$. The \dfn{biisoinserter of $f$ and $g$} is, if it exists, an object $L\in \L$ together with a morphism $\ell\:L\to A$ and an isomorphic 2-cell
						\begin{cd}[1][5]
							\& A \ar[rd,"{f}"]\& \\
							L \ar[ru,"{\ell}"]\ar[rd,"{\ell}"'] \ar[rr,iso,shift right=0.4ex,"{\lambda}"{inner sep=1.15ex}]\&\& B \\
							\& A \ar[ru,"{g}"'] \&
						\end{cd}
						in $\L$, which is universal in the following bicategorical sense:
						\begin{enum}
							\item for every morphism $m\:M\to A$ and isomorphic 2-cell $\mu\:f\c m\iso g\c m$, there exist a morphism $v\:M\to L$ and an isomorphic 2-cell $\delta\:m\iso \ell\c v$ such that
							\begin{eqD*}
								\begin{cd}*[1][5]
									\& A \ar[rd,"{f}"]\& \\
									M \ar[ru,"{m}"]\ar[rd,"{m}"'] \ar[rr,iso,shift right=0.4ex,"{\mu}"{inner sep=1.15ex}]\&\& B \\
									\& A \ar[ru,"{g}"'] \&
								\end{cd}\quad = \h[-0.5]\quad
								\begin{cd}*[1][5]
									\&\& A \ar[rd,"{f}"]\& \\
									M \ar[rr,iso,shift left=2.6ex,"{\delta}"{inner sep=1.15ex}]\ar[rr,iso,shift right=2.6ex,"{\delta^{-1}}"'{inner sep=0.92ex}]\ar[r,"{v}"]\ar[rru,bend left,"{m}"]\ar[rrd,bend right,"{m}"'] \& L \ar[ru,"{\ell}"]\ar[rd,"{\ell}"'] \ar[rr,iso,shift right=0.4ex,"{\lambda}"{inner sep=1.15ex}]\&\hphantom{.}\& B \\
									\&\& A \ar[ru,"{g}"'] \&
								\end{cd}
							\end{eqD*}
							\item for every pair of morphisms $v,w\:M\to L$ and 2-cell $\tau\:\ell\c v\aR{}\ell\c w\:M\to A$ in $\L$ such that $\lambda\ast w\c f\ast \tau=g\ast \tau\c \lambda\ast v$, there exists a unique 2-cell $\sigma\:v\aR{}w$ such that $\ell\ast \sigma=\tau$.    
						\end{enum}
					\end{definition}
					
					\begin{proposition}
						Let $\L$ be a 2-pointed 2-category, with a fixed choice $0$ of a representative for the bizero object. Consider the 2-ideal $(0)$ canonically associated to $\L$. Then the 2-kernel of a morphism $f\:A\to B$ is given by the biisoinserter of $f\:A\to B$ and a chosen null morphism $A\to 0\to B$ (one always exists because $0$ is a bizero object).
						
						As a consequence, $2$-kernels are faithful and conservative in the 2-pointed case.
						
						Dually for 2-cokernels.
					\end{proposition}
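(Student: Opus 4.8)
The plan is to equip the biisoinserter with natural extra data and verify that it satisfies the universal property of the $(0)$-2-kernel of $f$; since universal objects are essentially unique, this identifies the two. Write $L$, $\ell\colon L\to A$ and $\alpha\colon f\c\ell\iso g\c\ell$ for the biisoinserter of $f$ and a chosen null morphism $g\colon A\to 0\to B$. As $g$ factors through $0$, so does $g\c\ell$, hence $n:=g\c\ell$ is a null morphism, and we propose $(\ell,\alpha)$ as the data of a candidate $(0)$-2-kernel, with $n$ playing the role of the null morphism and $\alpha$ that of the structural isomorphism. Everything rests on one remark: since $0$ is a bizero object, $\HomC{\L}{Z}{0}$ and $\HomC{\L}{0}{B}$ are equivalent to the terminal category for every $Z$, so between any two of their objects there is a unique $2$-cell and it is invertible. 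Consequently all null morphisms $Z\to B$ lie in a single isomorphism class, between any two of them there is exactly one null $2$-cell (this is exactly the assertion of \exax\ref{zero2ideal} that $(0)(Z,B)$ is equivalent to the terminal category), and for any $z\colon Z\to A$ every null morphism $Z\to B$ is canonically isomorphic to $g\c z$. In particular, ``$f\c z$ is isomorphic to a null morphism'' is equivalent to ``there is an isomorphic $2$-cell $f\c z\iso g\c z$''.

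For the $1$-dimensional part, start from a pair $(z,\beta)$ with $z\colon Z\to A$ and $\beta\colon f\c z\iso n'$, $n'$ null. Composing $\beta$ with the canonical isomorphism $n'\iso g\c z$ yields an isomorphic $2$-cell $\mu\colon f\c z\iso g\c z$ --- precisely the input for the first clause of the biisoinserter property applied to $m:=z$. That clause returns $v\colon Z\to L$ and $\delta\colon z\iso\ell\c v$ together with an equation expressing $\mu$ as a pasting of $\delta$, $\alpha$ and $\delta^{-1}$; set $u:=v$ and $\gamma:=\delta$. Every $2$-cell occurring in the pasting of clause~$(1)$ of \defx\ref{deftwokernel} is then invertible (the coherence isomorphisms $\nu$ of the ideal $(0)$ are identities by \exax\ref{zero2ideal}), so the pasting is invertible; and after substituting the biisoinserter equation and cancelling the occurrences of $\beta$ and of the canonical isomorphism $n'\iso g\c z$, it is identified with a composite of bizero-canonical isomorphisms, hence with the unique null $2$-cell between its (null) source and target, hence it is null. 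Reading this construction backwards turns $(0)$-2-kernel data into biisoinserter data, and the uniqueness clauses of the two properties correspond in the same way.

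The $2$-dimensional part is the heart of the matter. Given $u,v\colon Z\to L$ and $\lambda\colon\ell\c u\Rightarrow\ell\c v$, unwind $\Xi^{\lambda}$ in the ideal $(0)$: since the $\nu$-coherences are identities and the structural isomorphism is $\alpha$, the pasting collapses to
\[ \Xi^{\lambda}=(\alpha\ast v)\c(f\ast\lambda)\c(\alpha\ast u)^{-1}\colon\; g\c\ell\c u\Rightarrow g\c\ell\c v, \]
a $2$-cell between the null morphisms $g\c\ell\c u$ and $g\c\ell\c v$. It is crucial to observe that $\Xi^{\lambda}$ need \emph{not} coincide with $g\ast\lambda$: the interchange law does not apply here, because $\lambda$ is a $2$-cell between morphisms with codomain $A$, not with codomain $L$. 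Now $(0)(Z,B)$ is equivalent to the terminal category, so there is a unique null $2$-cell $g\c\ell\c u\Rightarrow g\c\ell\c v$; and $g\ast\lambda$ is one such, since $g$ factors through $0$ and therefore $g\ast\lambda$ is a bizero-canonical isomorphism. Hence ``$\Xi^{\lambda}$ is a null $2$-cell'' is equivalent to $\Xi^{\lambda}=g\ast\lambda$, i.e.\ to $(\alpha\ast v)\c(f\ast\lambda)=(g\ast\lambda)\c(\alpha\ast u)$, which is verbatim the compatibility condition in clause~$(ii)$ of the biisoinserter property (with $\tau=\lambda$); and the ensuing conclusion --- a unique $\mu$ with $\ell\ast\mu=\lambda$ --- is verbatim the conclusion of clause~$(2)$ of \defx\ref{deftwokernel}. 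This completes the identification of the $(0)$-2-kernel of $f$ with the biisoinserter of $f$ and $g$.

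Finally, faithfulness of $2$-kernels is \prox\ref{propkerarefaith} (it also follows from the uniqueness clause of the biisoinserter property). For conservativity, let $\sigma\colon u\Rightarrow v$ be a $2$-cell with $\ell\ast\sigma$ invertible, and set $\lambda:=\ell\ast\sigma$. Since $\sigma$ is a genuine $2$-cell the interchange law \emph{does} apply and gives $\Xi^{\lambda}=g\ast(\ell\ast\sigma)$, which is again a bizero-canonical isomorphism, hence an invertible null $2$-cell; as $\lambda$ is also invertible, the lemma following \prox\ref{propkerarefaith} shows that the $2$-cell produced by clause~$(2)$ of \defx\ref{deftwokernel} from $\lambda$ is invertible, and by the uniqueness in clause~$(2)$ this induced $2$-cell is $\sigma$, so $\sigma$ is invertible. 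The statements about $2$-cokernels follow by dualizing: the $(0)$-2-cokernel of $f\colon A\to B$ is the biisocoinserter of $f$ and a chosen null morphism $A\to 0\to B$, and biisocoinserters are cofaithful and conservative in the dual sense. The main obstacle throughout is the bookkeeping in the two matchings: computing $\Xi^{\lambda}$ explicitly in the ideal $(0)$ and recognising the resulting condition as the biisoinserter compatibility, together with the routine but delicate verification that the pastings occurring in clause~$(1)$ of \defx\ref{deftwokernel} are the canonical null $2$-cells.
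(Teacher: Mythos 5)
Your proposal is correct and follows essentially the same route as the paper: it identifies the $(0)$-2-kernel with the biisoinserter by matching the two universal properties, using that $\HomC{\L}{-}{0}$ and $(0)(Z,B)$ are equivalent to the terminal category to translate the nullity conditions into the biisoinserter's compatibility condition (your explicit reduction of ``$\Xi^\lambda$ is null'' to $\Xi^\lambda=g\ast\lambda$ is just a spelled-out version of the paper's remark that nullity of the pasting lets one invoke the biisoinserter's 2-dimensional property). You additionally supply the interchange-law argument for conservativity, which the paper's proof leaves implicit; that part is also correct.
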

					\begin{proof}
						We prove that the structure 2-cell of the biisoinserter of $f$ and the chosen $A\aar{t}0\aar{i}B$ gives the structure isomorphic 2-cell 
						\begin{cd}[0.5][5]
							\& A \ar[rrd,bend left=14,"{f}"]\&\& \\
							L \ar[ru,"{\ell}"]\ar[rd,"{\ell}"'] \ar[rrr,iso,shift right=0.4ex,"{\lambda}"{inner sep=1.15ex}]\&\&\& B \\
							\& A \ar[r,"{t}"'] \& 0 \ar[ru,"{i}"']
						\end{cd}
						of a 2-kernel of $f$. Notice that indeed the composite morphism below is null.
						
						Given a morphism $M\aar{m} A$ and an isomorphic 2-cell
						\begin{cd}[1][5]
							\& 0 \ar[rd,"{b}"]\& \\
							M \ar[ru,"{a}"]\ar[rd,"{m}"'] \ar[rr,iso,shift right=0.4ex,"{\beta}"{inner sep=1.15ex}]\&\& B \\
							\& A \ar[ru,"{f}"'] \&
						\end{cd}
						we can paste $\beta^{-1}$ with the unique isomorphic 2-cells $a\iso t\c m$ and $b \iso i$ to obtain an isomorphic 2-cell $f\c m\iso i\c t\c m$, and thus induce $v\:M\to L$ and an isomorphic 2-cell $\delta:m\iso \ell\c v$ by the universal property of the biisoinserter. It is straightforward to show that $v$ and $\delta$ satisfy the required axiom for the 1-dimensional universal property of the 2-kernel.
						
						Consider then $v,w\:M\to K$ and a 2-cell $\tau\:\ell\c v\aR{}\ell\c w$ such that $f\ast \tau$, pasted with $\lambda$ and $\lambda^{-1}$, gives a null 2-cell, which means that it factors through $i$. The latter assumption guarantees that we can apply the 2-dimensional universal property of the biisoinserter to the 2-cell $\tau$. And we thus obtain that there exists a unique 2-cell $\sigma\:v\aR{}w$ such that $\ell\ast \sigma=\tau$.
					\end{proof}
					
					\begin{remark}
						Biisoinserters are a 2-dimensional analogue of equalizers. Indeed, they equalize parallel morphisms up to an isomorphic 2-cell, in a 2-universal way.
					\end{remark}
					
					\begin{remark}\label{remcomparisonkernels}
						It is straightforward to see that in the particular case of (strictly described) categories enriched over pointed groupoids, the 2-pointed version of our notions recovers the known notions of 2-kernel and 2-cokernel in the literature (see \cite{Dup08,Nak08}). Our notions are much more general, as they do not require to have a bizero object nor all 2-cells to be isomorphisms.
					\end{remark}
					
					The notion of Grandis exact category involves closed ideals of null morphisms. We thus need a 2-dimensional analogue of closedness for an ideal.
					
					Recall that the following well known result holds in dimension 1. 
					
					\begin{proposition}\label{propcharclosed}
						Let $\C$ be a category and $\N$ be an ideal of null morphisms in $\C$. Assume that $\C$ has all kernels and cokernels. The following conditions are equivalent:
						\begin{enumT}
							\item $\N$ is closed, i.e.\ every null morphism factors through an object whose identity is a null morphism (called null object);
							\item All kernels reflect null morphisms, i.e.\ if $k\c f$ is a null morphism with $k$ a kernel then $f$ is a null morphism.
							\item All cokernels coreflect null morphisms, i.e.\ if $f\c c$ is a null morphism with $c$ a cokernel then $f$ is a null morphism.
						\end{enumT}
					\end{proposition}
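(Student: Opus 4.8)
The plan is to prove $(i)\Rightarrow(ii)$ and $(ii)\Rightarrow(i)$, and then to obtain $(i)\Leftrightarrow(iii)$ for free by duality. Condition $(i)$ is self-dual (a factorization $n=v\c u$ of a null morphism through a null object $O$ reverses to a factorization of $n\op$ through $O$, which is again a null object of $\C\op$ equipped with the opposite ideal), and ``all cokernels coreflect null morphisms in $\C$'' is literally the statement ``all kernels reflect null morphisms in $\C\op$''. So it suffices to establish the first two implications, and I would carry them out as follows, with ``kernel'' and ``cokernel'' throughout meaning the $\N$-relative notions (the $1$-dimensional analogues of \defx\ref{deftwokernel}).

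For $(i)\Rightarrow(ii)$: take a kernel $k\: K\to A$ of a morphism $f\: A\to B$ and a morphism $g\: Z\to K$ with $k\c g$ null. First I would use closedness to factor $k\c g=v\c u$ with $u\: Z\to O$, $v\: O\to A$ and $\id{O}$ null. Then $f\c v=(f\c v)\c\id{O}$ is null since $\N$ is an ideal, so the universal property of the kernel produces $w\: O\to K$ with $k\c w=v$. Now $k\c(w\c u)=v\c u=k\c g$, and the uniqueness clause in the universal property of $k$ (equivalently, the fact that kernels are monomorphisms) forces $g=w\c u$; since $w\c u=w\c\id{O}\c u$ factors through the null object $O$, it is null, hence so is $g$.

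For $(ii)\Rightarrow(i)$: the key observation I would isolate first is that, for every object $X$, the kernel $k_X\: K_X\to X$ of $\id{X}$ (which exists by hypothesis) is a null object. Indeed $k_X$ is null by the definition of kernel, it is itself a kernel, and $k_X\c\id{K_X}=k_X$ is null, so $(ii)$ gives that $\id{K_X}$ is null. Given now any null morphism $n\: A\to B$, I would invoke the universal property of $k_B$ --- legitimate because $\id{B}\c n=n$ is null --- to obtain $u\: A\to K_B$ with $k_B\c u=n$, which exhibits $n$ as factoring through the null object $K_B$. Hence every null morphism factors through a null object, i.e.\ $\N$ is closed.

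I do not expect a genuine obstacle, since the statement is classical; the two points deserving a little care are the self-duality bookkeeping used to transfer $(ii)$ to $(iii)$, and the observation in $(ii)\Rightarrow(i)$ that the kernel of an identity morphism is a null object. Everything else is routine diagram chasing, provided one is careful to apply the universal properties of $\N$-kernels and $\N$-cokernels only to morphisms whose relevant composite is null.
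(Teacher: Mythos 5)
Your proof is correct. The paper states \prox\ref{propcharclosed} without proof (as a recalled classical fact), but your argument --- establishing $(i)\Leftrightarrow(ii)$ via the observations that kernels relative to $\N$ are monomorphisms and that the kernel of an identity is a null object, then transferring to $(iii)$ by the self-duality of $(i)$ --- is the standard one and is precisely the $1$-dimensional shadow of the strategy the paper itself uses for the $2$-dimensional analogue in \prox\ref{charactweaklyreflect}.
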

					
					Conditions $(ii)$ and $(iii)$ of \prox\ref{propcharclosed} are easier and more straightforward to generalize to dimension 2. 
					
					\begin{definition}\label{defreflnull}
						Let $\L$ be a 2-category and $\N=(N,\nu)$ be a 2-ideal of null morphisms and null 2-cells in $\L$. We say that a morphism $K \aar{k} A$ \dfn{reflects null morphisms} if whenever a morphism $D \aar{s} K$ is such that $D \aar{s} K \aar{k} A$ is isomorphic to a null morphism $D \anull{n} A$ via a 2-cell $\delta\: k \c s \Rightarrow n$, there exist a null morphism $D \anull{\widehat{\psi}} K$ and an isomorphic 2-cell $\psi\: s \Rightarrow \hat{\psi}$ such that $\delta$ coincides with
						\begin{cd}[7][7]
							{D} \arrow[r, "s", bend left=30] \arrow[r, "{\widehat{\psi}}"'{pos=0.65, inner sep=0.2ex}, bend right=30, null]  \arrow[rr, "{m}"'{pos=0.58, inner sep=0.15ex}, bend right=42, null] \arrow[r, "\psi"'{pos=0.81, inner sep=0.00001ex}, iso]  \& {K} \arrow[r,"{\hspace{2ex}\nu}"'{pos=0.5}, iso, shift right= 2.1ex, start anchor={[xshift=-3.8ex]}]   \arrow[r, "k"]  \& {A}
						\end{cd}
						up to an invertible null 2-cell $\xi\: m \Rightarrow n$. Equivalently, the 2-cell 
						\begin{cd}[7][7]
							{D} \arrow[rr,"n"{pos=0.58,inner sep=0.15ex}, null, bend left=45] \arrow[rr, iso, "\delta^{-1}"'{pos=0.65, inner sep=0.0001ex}, shift left=3ex] \arrow[r, "s", bend left=30] \arrow[r, "{\widehat{\psi}}"'{pos=0.65, inner sep=0.2ex}, bend right=30, null]  \arrow[rr, "{m}"'{pos=0.58, inner sep=0.15ex}, bend right=46, null]  \arrow[r, "\psi"'{pos=0.81, inner sep=0.00001ex}, iso]  \& {K} \arrow[r,"{\hspace{2ex}\nu}"'{pos=0.5}, iso, shift right= 2.1ex, start anchor={[xshift=-3.8ex]}]   \arrow[r, "k"]  \& {A}
						\end{cd}
						is an invertible null 2-cell.

						We say that $K \aar{k} A$ \dfn{reflects null 2-cells} if 
						whenever a 2-cell $\mu\:s\aR{}s'\:D\to K$ between null morphisms $s$ and $s'$ is such that the 2-cell 
						\begin{cd}
							{D} \arrow[rr, shift left=3.5 ex, iso, "\nu"'{inner sep=1ex}] \arrow[rr, shift right=2.5 ex, iso, "\nu"'{inner sep=1ex}] \arrow[rr, bend left=45, null] \arrow[rr, bend right=45, null] \arrow[r, "s"{name=A}, bend left, null] \arrow[r, "s'"'{name=B, inner sep=0.3 ex}, bend right, null] \arrow[from=A, to=B, Rightarrow, shorten >= 1 ex,shorten <= 1 ex, "\mu"] \& {K} \arrow[r, "k"] \& {A}
						\end{cd}
						is a null 2-cell, we have that the 2-cell $\mu$ is a null 2-cell.
						
						Dually for morphisms coreflecting null morphisms and null 2-cells.
					\end{definition}
					
					\begin{definition}
						Let $\L$ be a 2-category and $\N=(N,\nu)$ be a 2-ideal of null morphisms and null 2-cells in $\L$. Assume that $\L$ has all $\N$-2-kernels and $\N$-2-cokernels. We call $\N$ a \dfn{closed 2-ideal} if all $\N$-2-kernels reflect null morphisms and null 2-cells and all $\N$-2-cokernels coreflect null morphisms and null 2-cells. 
					\end{definition}
					
					We show that a generalization to dimension 2 of condition $(i)$ of \prox\ref{propcharclosed} is equivalent to having a weaker version of a closed 2-ideal.
					
					\begin{definition}\label{defweaklyreflect}
						Let $\L$ be a 2-category and $\N=(N,\nu)$ be a 2-ideal of null morphisms and null 2-cells in $\L$. We say that an $\N$-2-kernel $K \aar{k} A$, say of $f\:A\to B$ with structure 2-cell $\alpha$, \dfn{weakly reflects null morphisms} if whenever a morphism $D \aar{s} K$ is such that $D \aar{s} K \aar{k} A$ is isomorphic to a null morphism $D \anull{n} A$ via a 2-cell $\delta\: k \c s \Rightarrow n$ such that
						\begin{cd}[5][5]
							D \& K \& A \& B
							\arrow["s"', from=1-1, to=1-2]
							\arrow["n"{pos=0.58,inner sep=0.15ex}, bend left=40,null, from=1-1, to=1-3]
							\arrow[iso,"{\delta^{-1}}"{pos=0.4,inner sep=0.51ex}, shift left=3.8, from=1-1, to=1-3]
							\arrow[iso,"\nu"'{pos=0.58,inner sep=0.75ex}, shift right=5.6, from=1-1, to=1-3]
							\arrow[bend right=41.5, null, from=1-1, to=1-4]
							\arrow[bend left=41.5, null, from=1-1, to=1-4]
							\arrow["k"', from=1-2, to=1-3]
							\arrow[bend right=39,null, from=1-2, to=1-4]
							\arrow[iso,"\alpha"'{pos=0.57,inner sep=0.7ex}, shift right=3.8, from=1-2, to=1-4]
							\arrow[iso,"\nu"{pos=0.58,inner sep=0.75ex}, shift left=5.6, from=1-2, to=1-4]
							\arrow["f"', from=1-3, to=1-4]
						\end{cd}
						is an invertible null 2-cell, there exist a null morphism $D \anull{\widehat{\psi}} K$ and an isomorphic 2-cell $\psi\: s \Rightarrow \hat{\psi}$ such that the 2-cell 
						\begin{cd}[7][7]
							{D} \arrow[rr,"n"{pos=0.58, inner sep=0.15ex}, null, bend left=45] \arrow[rr, iso, "\delta^{-1}"'{pos=0.65, inner sep=0.0001ex}, shift left=3ex] \arrow[r, "s", bend left=30] \arrow[r, "{\widehat{\psi}}"'{pos=0.65, inner sep=0.2ex}, bend right=30, null]  \arrow[rr, "m"'{pos=0.58, inner sep=0.15ex}, bend right=46, null]  \arrow[r, "\psi"'{pos=0.81, inner sep=0.00001ex}, iso]  \& {K} \arrow[r,"{\hspace{2ex}\nu}"'{pos=0.5}, iso, shift right= 2.1ex, start anchor={[xshift=-3.8ex]}]   \arrow[r, "k"]  \& {A}
						\end{cd}
						is an invertible null 2-cell.
						
						Dually for $\N$-2-cokernels weakly coreflecting null morphisms.
					\end{definition}
					
					\begin{remark}
						Notice that \defx\ref{defweaklyreflect} is a weaker version of \defx\ref{defreflnull}, containing the extra assumption that $\delta$ is compatible with the way in which $k$ is a 2-kernel.
					\end{remark}
					
					\begin{definition}
						Let $\L$ be a 2-category and $\N$ be a 2-ideal of null morphisms and null 2-cells in $\L$. An object $Z$ of $\L$ is called a \dfn{null object} if the identity of $Z$ is isomorphic to a null morphism $\widehat{\xi^Z}\:Z\anull{} Z$.
						\begin{cd}[7][9]
							Z \arrow[r,equal,bend left] \arrow[r,bend right,null,"{\widehat{\xi^Z}}"'{inner sep=0.78ex}] \arrow[r,iso,shift right=0.85ex,"{\exists \xi^Z}"{inner sep=0.78ex}]\& Z
						\end{cd}
					\end{definition}
					
					\begin{proposition}\label{charactweaklyreflect}
						Let $\L$ be a 2-category and $\N=(N,\nu)$ be a 2-ideal of null morphisms and null 2-cells in $\L$. Assume that $\L$ has all $\N$-2-kernels. The following conditions are equivalent:
						\begin{enumT}
							\item All $\N$-2-kernels weakly reflect null morphisms;
							\item Every morphism $h\:A\to B$ isomorphic to a null morphism $n$, via an isomorphic 2-cell $\rho$, factors up to an isomorphic 2-cell $\gamma$ through a null object $Z$, making
							\begin{cd}[5][5]
								A \&\&\&\& B \\[-2ex]
								\& Z \&\& Z \& \hphantom{.}
								\arrow["n", null,bend left=20, from=1-1, to=1-5]
								\arrow[iso,"{\rho^{-1}}"'{pos=0.59, inner sep=0.02ex}, shift left=3.3, from=1-1, to=1-5]
								\arrow["h"', from=1-1, to=1-5]
								\arrow[null,bend right=90, from=1-1, to=1-5]
								\arrow["x"', from=1-1, to=2-2]
								\arrow[equal,bend left=34, from=2-2, to=2-4]
								\arrow[null,"{\widehat{\xi^{Z}}}"'{pos=0.45,inner sep=0.4ex}, bend right=15,from=2-2, to=2-4]
								\arrow[iso,"{\xi^Z}"{pos=0.66,inner sep=0.68}, shift left=0.6, from=2-2, to=2-4]
								\arrow[iso,"{\nu}"{pos=0.585,inner sep=0.68}, shift right=4.4,from=2-2, to=2-5]
								\arrow[iso,"\gamma"{pos=0.62,inner sep=0.68ex}, shift left=6.85, start anchor={[xshift=8ex]},from=2-2, to=2-4]
								\arrow["y"', from=2-4, to=1-5]
							\end{cd}
							into an invertible null 2-cell, where $\xi^Z$ is the isomorphic 2-cell associated to the null object $Z$.
							\item All $\N$-2-cokernels weakly coreflect null morphisms;
						\end{enumT}
					\end{proposition}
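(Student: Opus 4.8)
The plan is to prove $(i)\Leftrightarrow(ii)$ in detail and then to deduce $(iii)\Leftrightarrow(ii)$ by duality, exploiting that the factorization property in $(ii)$ and the notion of null object are both self-dual. Throughout I would follow the pattern of the one-dimensional \prox\ref{propcharclosed}, with the $\N$-2-kernel of an identity morphism playing the role of the kernel of an identity.

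For \emph{$(ii)\Rightarrow(i)$}, let $k\:K\to A$ be an $\N$-2-kernel of $f\:A\to B$ with structure $2$-cell $\alpha$ exhibiting $f\c k$ as isomorphic to a null morphism, and suppose that $s\:D\to K$, a null morphism $D\anull{n}A$, and an isomorphism $\delta\:k\c s\Rightarrow n$ are given, subject to the compatibility condition of \defx\ref{defweaklyreflect} (that the large pasting there is an invertible null $2$-cell). I would first apply $(ii)$ to $k\c s$, which is isomorphic to the null morphism $n$ via $\delta$, obtaining a null object $Z$, morphisms $x\:D\to Z$ and $y\:Z\to A$, and an isomorphic $2$-cell $\gamma_0$ with $k\c s\iso y\c x$ (up to the canonical null replacement), such that the associated $(ii)$-pasting is an invertible null $2$-cell. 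Since $Z$ is a null object, $\id{Z}$ is isomorphic to the null morphism $\widehat{\xi^Z}$, so $y\iso y\c\widehat{\xi^Z}$ and $x\iso\widehat{\xi^Z}\c x$ are each isomorphic to a null morphism by the $2$-ideal structure (\thex\ref{theorchar2ideal}); in particular $f\c y$ is isomorphic to a null morphism, and the $1$-dimensional universal property of the $2$-kernel $k$ in \defx\ref{deftwokernel} then yields a lift $\bar{y}\:Z\to K$ together with an isomorphism $\gamma_1\:y\Rightarrow k\c\bar{y}$ whose defining pasting is an invertible null $2$-cell. I set $\widehat\psi:=\bar{y}\c x$, which is isomorphic to a null morphism because $x$ is, and I produce the isomorphism $\psi\:s\Rightarrow\widehat\psi$ by applying the $2$-dimensional universal property of $k$ to $u:=s$, $v:=\bar{y}\c x$ and the composite isomorphism $\lambda\:k\c s\Rightarrow k\c(\bar{y}\c x)$ built from $\gamma_0$ and $\gamma_1$; the hypothesis that $\Xi^\lambda$ be a null $2$-cell is precisely what the compatibility condition on $\delta$ delivers, in combination with the invertible-null clauses from $(ii)$ and from the universal property of $k$. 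The resulting $2$-cell is invertible by the Lemma above, so composing it with the structural isomorphism $\bar{y}\c x\iso\widehat\psi$ gives $\psi$, and it remains to verify that the small pasting of \defx\ref{defweaklyreflect} built from $\psi$, $\widehat\psi$ and $\delta$ is an invertible null $2$-cell, which follows by transporting the same invertible-null data through the construction.

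For \emph{$(i)\Rightarrow(ii)$}, let $h\:A\to B$ be isomorphic to a null morphism $n$ via $\rho$. I would take the $\N$-2-kernel $k\:K\to B$ of $\id{B}$, with structure $2$-cell $\alpha$; since $\id{B}\c k=k$, already $\alpha$ exhibits $k$ itself as isomorphic to a null morphism. The first step is that $K$ is a null object: apply $(i)$, i.e.\ that $k$ weakly reflects null morphisms, to $s:=\id{K}$, for which $k\c\id{K}=k$ is isomorphic to a null morphism via $\alpha$; the compatibility condition of \defx\ref{defweaklyreflect} holds because $f=\id{B}$, one may take $\delta=\alpha$, and the structural $2$-cells $\nu$ attached to the pair $(\id{},\id{})$ are identities (\thex\ref{theorchar2ideal}), so the large pasting reduces to the identity $2$-cell on a null morphism, which is a null $2$-cell. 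Weak reflection then supplies a null morphism $\widehat{\xi^K}\:K\anull{}K$ together with an isomorphism $\id{K}\iso\widehat{\xi^K}$, i.e.\ $K$ is a null object. The second step is to factor $h$: since $\id{B}\c h=h$ is isomorphic to the null morphism $n$ via $\rho$, the $1$-dimensional universal property of the $2$-kernel $k$ of $\id{B}$ produces $x\:A\to K$ and an isomorphism $\gamma\:h\Rightarrow k\c x$ with the defining pasting an invertible null $2$-cell; taking $Z:=K$ and $y:=k$ exhibits $h$ as factoring through the null object $K$, and the invertible-null condition demanded in $(ii)$ is read off from that pasting together with the facts that $\alpha$ witnesses $k$ as isomorphic to a null morphism and $\xi^K$ witnesses $K$ as a null object.

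Finally, $(ii)$ is invariant under passing to $\L\op$ with the dual $2$-ideal, since a null object is self-dual and the factorization square in $(ii)$ is symmetric, whereas $(iii)$ is the formal dual of $(i)$; hence $(i)\Leftrightarrow(ii)$ applied in $\L\op$ yields $(iii)\Leftrightarrow(ii)$, now with $\N$-2-cokernels of identity morphisms in place of $\N$-2-kernels. I expect the main obstacle to be the $2$-cell bookkeeping in $(ii)\Rightarrow(i)$: checking that $\lambda$ satisfies $\Xi^\lambda$ null, so that the $2$-dimensional universal property applies, and that the final small pasting of \defx\ref{defweaklyreflect} is an invertible null $2$-cell. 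Both should reduce, after manipulating the relevant pasting diagrams, to the compatibility condition built into \defx\ref{defweaklyreflect} together with the invertible-null clause of $(ii)$ — indeed the ``weak'' hypotheses in \defx\ref{defweaklyreflect} are tailored precisely so that these verifications go through, whereas the unrestricted reflection of \defx\ref{defreflnull} would demand strictly more.
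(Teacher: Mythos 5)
Your proposal is correct and follows essentially the same route as the paper: for $(i)\Rightarrow(ii)$ both take the $\N$-2-kernel of $\id{B}$, show it is a null object by weakly reflecting the identity, and factor $h$ through it via the 1-dimensional universal property; for $(ii)\Rightarrow(i)$ both factor the null morphism through a null object $Z$, lift $y$ through $k$, and apply the 2-dimensional universal property to identify $s$ with $u\c x$, which is isomorphic to a null morphism since $Z$ is null; and $(iii)$ is obtained by duality in both. The only cosmetic differences are the order of the two steps in $(i)\Rightarrow(ii)$ and that you apply $(ii)$ to $k\c s$ rather than to $n$ itself, which yields the same data after composing with $\delta$.
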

					\begin{proof}
						We prove $(i)\aR{}(ii)$. Let $h\:A\to B$ be a morphism in $\L$ isomorphic to a null morphism $n$ via a 2-cell $\rho$. Consider then the $\N$-2-kernel $Z\aar{k}{B}$ of the identity of $B$, and call $\alpha$ its structure 2-cell. Then by the universal property of the $\N$-2-kernel, there exist $u\:A\to Z$ and an isomorphic 2-cell $\gamma\:h\aR{}k\c u$ such that
						\begin{cd}
							{A} \arrow[rrrd,""{name=C}, null, bend right=69] \arrow[rd,"{\exists u}"'{name=A}, dashed] \arrow[rrd,"h"{name=B}, bend left=20] \arrow[rrrd, null, bend left,"n"{pos=0.58,inner sep=0.15ex}] \&[-2ex] {} \&[-2ex] {} \&[-2ex] {}\\[-4ex]
							{} \& {Z} \arrow[r,"{\exists \gamma}"'{pos=0.65}, iso, shift left= 3ex, start anchor={[xshift=-8ex]}] 
							\arrow[r,"{\nu}"'{pos=0.65}, iso, shift right= 2.8ex, start anchor={[xshift=-8ex]}] 
							\arrow[r,"k"] \arrow[rr,""{name=D}, bend right=42, null] \& {B} \arrow[r,"{\rho^{-1}}"'{pos=0.66,inner sep=0.35ex}, iso, shift left= 3.3ex, start anchor={[xshift=-8ex]}] \arrow[r,"{\alpha}"'{pos=0.65}, iso, shift right= 2ex, start anchor={[xshift=-8ex]}]  \arrow[r,equal] \& {B}
						\end{cd}
						is an invertible null 2-cell. We show that $Z$ is a null object. The identity $Z\aeqq{}Z$ is such that $k\c \id{Z}$ is isomorphic to a null morphism, via $\alpha$, making
						\begin{cd}[4.5][4.5]
							Z \& Z \& B \& B
							\arrow[equal, from=1-1, to=1-2]
							\arrow[""{pos=0.58,inner sep=0.15ex}, bend left=41,null, from=1-1, to=1-3]
							\arrow[iso,"{\alpha^{-1}}"{pos=0.36,inner sep=0.37ex}, shift left=3.8, from=1-1, to=1-3]
							\arrow[bend right=41, null, from=1-1, to=1-4]
							\arrow[bend left=41, null, from=1-1, to=1-4]
							\arrow["k"', from=1-2, to=1-3]
							\arrow[bend right=40,null, from=1-2, to=1-4]
							\arrow[iso,"\alpha"'{pos=0.57,inner sep=0.7ex}, shift right=3.8, from=1-2, to=1-4]
							\arrow[equal, from=1-3, to=1-4]
						\end{cd}
						into the identity 2-cell and thus into an invertible null 2-cell. Since $k$ weakly reflects null morphisms, there exist a null morphism $\widehat{\xi^{Z}}\:Z\anull{} Z$ and an isomorphic 2-cell $\xi^Z\:\id{Z}\aR{}\widehat{\xi}$ such that the 2-cell
						\begin{cd}[7][7]
							{Z} \arrow[rr,""{pos=0.58, inner sep=0.15ex}, null, bend left=45] \arrow[rr, iso, "\alpha^{-1}"'{pos=0.65, inner sep=0.0001ex}, shift left=3ex] \arrow[r,equal, bend left=30] \arrow[r, "{\widehat{\xi^Z}}"'{pos=0.65, inner sep=0.2ex}, bend right=30, null]  \arrow[rr, ""'{pos=0.58, inner sep=0.15ex}, bend right=46, null]  \arrow[r, "{\xi^Z}"'{pos=0.81, inner sep=0.00001ex}, iso]  \& {Z} \arrow[r,"{\hspace{2ex}\nu}"'{pos=0.5}, iso, shift right= 2.1ex, start anchor={[xshift=-3.8ex]}]   \arrow[r, "k"]  \& {B}
						\end{cd}
						is an invertible null 2-cell. We conclude that $Z$ is a null object and that $(ii)$ holds.
						
						We now prove $(ii)\aR{}(i)$. Let $f\:A\to B$ be a morphism in $\L$ and $K\aar{k}A$ be the $\N$-2-kernel of $f$, with structure 2-cell $\alpha$. Consider then $D\aar{s}K$ and an isomorphic 2-cell $\delta\:k\c s\aR{} n$ with $n$ a null morphism, such that 
						\begin{cd}[5][5]
							D \& K \& A \& B
							\arrow["s"', from=1-1, to=1-2]
							\arrow["n"{pos=0.58,inner sep=0.15ex}, bend left=40,null, from=1-1, to=1-3]
							\arrow[iso,"{\delta^{-1}}"{pos=0.4,inner sep=0.51ex}, shift left=3.8, from=1-1, to=1-3]
							\arrow[iso,"\nu"'{pos=0.58,inner sep=0.75ex}, shift right=5.6, from=1-1, to=1-3]
							\arrow[bend right=41.5, null, from=1-1, to=1-4]
							\arrow[bend left=41.5, null, from=1-1, to=1-4]
							\arrow["k"', from=1-2, to=1-3]
							\arrow[bend right=39,null, from=1-2, to=1-4]
							\arrow[iso,"\alpha"'{pos=0.57,inner sep=0.7ex}, shift right=3.8, from=1-2, to=1-4]
							\arrow[iso,"\nu"{pos=0.58,inner sep=0.75ex}, shift left=5.6, from=1-2, to=1-4]
							\arrow["f"', from=1-3, to=1-4]
						\end{cd}
						is an invertible null 2-cell. By $(ii)$, since $n$ is a null morphism, there exist a null object $Z$ and an isomorphic 2-cell $\gamma$ such that 
						\begin{cd}[5][5]
							D \&\&\&\& A \\[-2ex]
							\& Z \&\& Z \& \hphantom{.}
							\arrow["n",null, from=1-1, to=1-5]
							\arrow[null,bend right=90, from=1-1, to=1-5]
							\arrow["x"', from=1-1, to=2-2]
							\arrow[equal,bend left=34, from=2-2, to=2-4]
							\arrow[null,"{\widehat{\xi^{Z}}}"'{pos=0.45,inner sep=0.4ex}, bend right=15,from=2-2, to=2-4]
							\arrow[iso,"{\xi^Z}"{pos=0.66,inner sep=0.68}, shift left=0.6, from=2-2, to=2-4]
							\arrow[iso,"{\nu}"{pos=0.585,inner sep=0.68}, shift right=4.4,from=2-2, to=2-5]
							\arrow[iso,"\gamma"{pos=0.62,inner sep=0.68ex}, shift left=6.85, start anchor={[xshift=8ex]},from=2-2, to=2-4]
							\arrow["y"', from=2-4, to=1-5]
						\end{cd}
						is an invertible null 2-cell. By the universal property of the $\N$-2-kernel, there exist $u\:Z\to K$ and an isomorphic 2-cell $\lambda\:y\aR{}k\c u$ such that
						\begin{cd}[6.3][6.3]
							{Z}\arrow[rr,shift left=2.1,phantom,"{{\xi^Z}^{-1}}"'{pos=0.42,inner sep=0.5ex,start anchor={[xshift=1ex]}}]\arrow[r,iso,shift left=1.9]\arrow[r,bend left=45,null,"{\widehat{\xi^Z}}"] \arrow[r,equal]\arrow[rrrd,""{name=C}, null, bend right=69] \arrow[rd,"{\exists u}"'{name=A}, dashed] \arrow[rrd,phantom,""{name=B}, bend left=20] \arrow[rrrd,"", null, bend left=27,start anchor={[xshift=-0.5ex,yshift=2.4ex]},end anchor={[xshift=1.5ex,yshift=0.4ex]}] \&[-1ex] {Z} \arrow[rd,"{y}"{inner sep=0.25ex}]\&[-2ex] \hphantom{.} \&[-2ex] {}\\[-4ex]
							{} \& {K} \arrow[r,"{\exists \lambda}"'{pos=0.65}, iso, shift left= 3ex, start anchor={[xshift=-8ex]}] 
							\arrow[r,"{\nu}"'{pos=0.64}, iso, shift right= 2.8ex, start anchor={[xshift=-8ex]}] 
							\arrow[r,"k"] \arrow[rr,""{name=D}, bend right=42, null] \& {A} \arrow[r,"{\nu}"'{pos=0.64}, iso, shift left= 3.3ex, start anchor={[xshift=-8ex]}] \arrow[r,"{\alpha}"'{pos=0.64}, iso, shift right= 2ex, start anchor={[xshift=-8ex]}]  \arrow[r,"f"] \& {B}
						\end{cd}
						is an invertible null 2-cell. It is then straightforward to see that we can apply the 2-dimensional universal property of $\N$-2-kernel to the isomorphic 2-cell $\Theta$ given by
						\begin{cd}[3][5]
							\&\& K \\
							D \&\hphantom{.}\&\hphantom{.}\& A \\
							\& Z \& K \& \hphantom{.}
							\arrow["k", from=1-3, to=2-4]
							\arrow["s", from=2-1, to=1-3]
							\arrow["n", bend left=5, from=2-1, to=2-4]
							\arrow[iso,"\gamma"'{pos=0.58,inner sep=0.8ex}, shift right=3.4, from=2-1, to=2-3]
							\arrow[iso,"\delta"{pos=0.58,inner sep=0.8ex}, shift left=4.6, from=2-2, to=2-4]
							\arrow["x"', from=2-1, to=3-2]
							\arrow["y"{pos=0.35,inner sep=0.21ex},  bend left=20, from=3-2, to=2-4]
							\arrow["u"', from=3-2, to=3-3]
							\arrow[iso,"\lambda"{pos=0.6,inner sep=0.9ex}, shift left=5.2, from=3-2, to=3-4,shorten <=1.3ex]
							\arrow["k"', from=3-3, to=2-4]
						\end{cd}
						We obtain that there exists a unique isomorphic 2-cell $\mu\:s\aR{}u\c x$ such that $k\ast \mu=\Theta$. So $s$ is isomorphic to a null morphism via
						\begin{cd}
							D \& Z \& Z \& K
							\arrow["x"', from=1-1, to=1-2]
							\arrow["s", bend left=35, from=1-1, to=1-4]
							\arrow[null,bend right=35, from=1-1, to=1-4]
							\arrow[iso,"\mu"{pos=0.56,inner sep=0.75ex}, shift left=5.4, from=1-1, to=1-3]
							\arrow[iso,"\nu"{pos=0.59,inner sep=0.8}, shift right=6.3, from=1-2, to=1-4]
							\arrow[equal, bend left=30,from=1-2, to=1-3]
							\arrow[null,"{\widehat{\xi^Z}}"', bend right=30, from=1-2, to=1-3]
							\arrow[iso,"{\xi^Z}"'{pos=0.73,inner sep=0.58}, from=1-2, to=1-3]
							\arrow["u"', from=1-3, to=1-4]
						\end{cd}
						It is now straightforward to conclude.
						
						We have thus proved that $(i)$ and $(ii)$ are equivalent. Dually, we obtain that also $(ii)$ and $(iii)$ are equivalent.
					\end{proof}
					
					In the last part of this section, we introduce a notion of equivalence between 2-ideals. We will show that equivalent 2-ideals have same 2-kernels and 2-cokernels. This will be an important ingredient for the proof of \thex\ref{theorchargrandistwocat}.
					
					\begin{definition}\label{defeq2ideals}
						Let $\L$ be a 2-category and let $\N=(N, \nu)$ and $\N'=(N',\nu')$ be 2-ideals of null morphisms and null 2-cells in $\L$. We say that $\N$ is \dfn{equivalent} to $\N'$ if there exist a pseudonatural equivalence
						\begin{cd}
							{\L \op \x \L}  \arrow[r,simeq, "\iota"'{inner sep=0.8ex}, xshift=-0.35ex] \arrow[r, bend left=30, "N"] \arrow[r, bend right=30, "N'"']  \& {\Cat}
						\end{cd}
						and an invertible modification 
						\begin{eqD*}
							\begin{cd}*
								{\L \op \x \L}  \arrow[r,simeq, "\iota"'{inner sep=0.8ex}, xshift=-0.35ex, shift left=2ex] \arrow[r, bend left=40, "N"{pos=0.5}] \arrow[r, "N'"'{pos=0.5, name=A}] \arrow[r, bend right=60, "{\operatorname{Hom}(-,-)}"'{pos=0.511,name=B}]   \arrow[from=A, to=B, Rightarrow, "\nu'"{pos=0.1}, shorten <= 0.2ex, shorten >=0.2ex]\& {\Cat}
							\end{cd}
							\quad 
							\aM[\Xi]{\iso}
							\quad
							\begin{cd}*
								{\L \op \x \L}   \arrow[r, bend left=40, "N"{pos=0.5}," "{name=C}]  \arrow[r, bend right=40, "{\operatorname{Hom}(-,-)}"'{pos=0.5,name=B}, " "{name=D}]   \arrow[from=C, to=D, Rightarrow, "\nu"{pos=0.5}, shorten <= 1.2ex, shorten >=1.2ex]\& {\Cat}
							\end{cd}
						\end{eqD*}
					\end{definition}
					
					We aim at proving some useful explicit characterizations of equivalence of 2-ideals.
					
					\begin{definition}\label{listofproperties}
						Let $\N=(N, \nu)$ and $\N'=(N',\nu')$ be 2-ideals in a 2-category $\L$ and consider the following conditions:
						\begin{itemize}
							\item [(A)] for every $A \anull{n} B$ in $\N$ there exist $A \anull{n'} B$ in $\N'$ and an invertible 2-cell 
							\begin{cd}[6][7.5]
								A\ar[r,null,bend right=49,"n"'{inner sep=1ex,pos=0.53}] \ar[r,null,bend left=49,"n'"{inner sep=0.8ex}]\ar[r,shift left=1ex,iso,"\Xi_{A,B,n}"'{pos=0.6, inner sep=0.8ex}] \& B; 
							\end{cd}
							\item [(B)] for every $A \anull{n'} B$ in $\N'$ there exist $A \anull{m} B$ in $\N$ and an invertible 2-cell 
							\begin{cd}[6][7.5]
								A\ar[r,null,bend right=49,"n'"'{inner sep=0.9ex,pos=0.53}] \ar[r,null,bend left=49,"m"{inner sep=0.8ex}]\ar[r,shift left=1ex,iso,"\Xi'_{A,B,n'}"'{pos=0.6, inner sep=0.8ex}] \& B. 
							\end{cd}
						\end{itemize}
						Assuming that (A) and (B) are satisfied, consider the following properties relative to the isomorphisms $\Xi$ and $\Xi'$ of properties (A) and (B):
						\begin{itemize}
							\item [(1)] for every null 2-cell $\alpha\:m\aR{}n\:A\to B$ in $\N$ the 2-cell
							\begin{cd}[6][12.5]
								A  \arrow[r,iso,shift  left=4.4ex,xshift=-0.8ex,"{\Xi_{A,B,m}}"'{pos=0.82, inner sep=-1.5ex}] \arrow[r,iso,shift right=4.2ex,xshift=-0.8ex,"{\Xi^{-1}_{A,B,n}}"'{pos=0.82, inner sep=-1.5ex}] \arrow[r,"m"{inner sep=0.4ex, pos=0.445}, ""{name=S}, bend left=22, null] \arrow[r,"m'"{inner sep=0.8ex}, bend left=90, null] 
								\arrow[r,"n"'{inner sep=0.4ex, pos=0.445}, ""'{name=T},bend right=22, null] \arrow[from=S, to=T, Rightarrow, "\alpha"{inner sep=0.9ex}, shorten <=0.8ex, shorten >=0.8ex] \arrow[r,"n'"', bend right=90, null]  \& B
							\end{cd}
							is a null 2-cell in $\N'$;
							\item [(1')] for every null 2-cell $\beta\:n'\aR{}s'\:A\to B$ in $\N'$ the 2-cell
							\begin{cd}[6][12.5]
								A  \arrow[r,iso,shift left=4.4ex,xshift=-0.8ex,"{\Xi'_{A,B,n'}}"'{pos=0.82, inner sep=-1.5ex}] \arrow[r,iso,shift right=4.2ex,xshift=-0.8ex,"{\Xi'^{-1}_{A,B,s'}}"'{pos=0.82, inner sep=-1.5ex}] \arrow[r,"n'"'{inner sep=0.2ex, pos=0.33}, ""{name=S}, bend left=22, null] \arrow[r,"m"{inner sep=0.8ex}, bend left=90, null] 
								\arrow[r,"s'"{inner sep=0.2ex, pos=0.33}, ""'{name=T},bend right=22, null] \arrow[from=S, to=T, Rightarrow, "\beta"{inner sep=0.9ex}, shorten <=0.8ex, shorten >=0.8ex] \arrow[r,"r"'{inner sep=0.5ex}, bend right=90, null]  \& B
							\end{cd}
							is a null 2-cell in $\N$;
							\item [(2)] for every null morphisms $m,n\: A \anull{}B$ in $\N$ and every null 2-cell $\beta\:m'\aR{}n'\:A\to B$ in $\N'$ (where $m'$ and $n'$ are the morphisms in $\N'$ associated to $m$ and $n$) the 2-cell
							\begin{cd}[6][12.5]
								A  \arrow[r,iso,shift left=4.4ex,xshift=-0.8ex,"{\Xi^{-1}_{A,B,m}}"'{pos=0.82, inner sep=-1.5ex}] \arrow[r,iso,shift right=4.2ex,xshift=-0.8ex,"{\Xi_{A,B,n}}"'{pos=0.82, inner sep=-1.5ex}] \arrow[r,"m'"{inner sep=0.4ex, pos=0.445}, ""{name=S}, bend left=22, null] \arrow[r,"m"{inner sep=0.8ex}, bend left=90, null] 
								\arrow[r,"n'"'{inner sep=0.4ex, pos=0.445}, ""'{name=T},bend right=22, null] \arrow[from=S, to=T, Rightarrow, "\beta"{inner sep=0.9ex}, shorten <=0.8ex, shorten >=0.8ex] \arrow[r,"n"', bend right=90, null]  \& B
							\end{cd}
							is a null 2-cell in $\N$;
							\item[(1+2)] for every 2-cell $\alpha\:m\aR{}n\:A\to B$ between null morphisms $m$ and $n$ in $\N$ the 2-cell 
							\begin{cd}[6][12.5]
								A  \arrow[r,iso,shift left=4.4ex,xshift=-0.8ex,"{\Xi_{A,B,m}}"'{pos=0.82, inner sep=-1.5ex}] \arrow[r,iso,xshift=-0.8ex,shift right=4.2ex,"{\Xi^{-1}_{A,B,n}}"'{pos=0.82, inner sep=-1.5ex}] \arrow[r,"m"{inner sep=0.4ex, pos=0.445}, ""{name=S}, bend left=22, null] \arrow[r,"m'"{inner sep=0.8ex}, bend left=90, null] 
								\arrow[r,"n"'{inner sep=0.4ex, pos=0.445}, ""'{name=T},bend right=22, null] \arrow[from=S, to=T, Rightarrow, "\alpha"{inner sep=0.9ex}, shorten <=0.8ex, shorten >=0.8ex] \arrow[r,"n'"', bend right=90, null]  \& B
							\end{cd}
							is a null 2-cell in $\N'$ if and only if $\alpha$ is a null 2-cell in $\N$;
							\item[(3)] for every pair of morphisms $(a,b)\: (A,B) \to (A',B')$, the 2-cell
							\begin{cd}[6][12.5]
								A'   \arrow[rrr,null, bend left=30] \arrow[rrr,null, bend right=70] \arrow[rrr,null,"\widehat{n}"'{inner sep=0.6ex}, bend right=32] \arrow[r,"a"'] \&[-2.5ex] A \arrow[r, iso, "\nu'"'{pos=0.66, inner sep=-1.5ex}, shift left=5.8ex] \arrow[r, iso, "\Xi^{-1}_{A',B',\widehat{n}}"'{pos=0.84, inner sep=-1.5ex}, xshift=-0.8ex,shift right=11.5ex]  \arrow[r, iso, "\nu"'{pos=0.66, inner sep=-1.5ex}, shift right=6ex] \ar[r,null,bend right=30,"n"'{inner sep=1ex,pos=0.53}] \ar[r,null,bend left=30,"n'"{inner sep=0.8ex}]\ar[r,shift left=0.11ex,iso,xshift=-0.8ex,"\Xi_{A,B,n}"'{pos=0.82, inner sep=-1.5ex}] \& B \arrow[r,"b"']\&[-2.5ex] B'; 
							\end{cd}
							is an invertible null 2-cell in $\N'$;
							\item [(4)] for every  null morphism $A \anull{n'} B$ in $\N'$ the 2-cell
							\begin{cd}[6][12.5]
								A  \arrow[r,"m",null] \arrow[r,iso,shift left=2.4ex,xshift=-1.9ex,"{{\Xi'}^{-1}_{A,B,n'}}"'{pos=0.85, inner sep=-1.5ex}] \arrow[r,iso,shift right=2.5ex,xshift=-1.9ex,"{\Xi^{-1}_{A,B,m}}"'{pos=0.82, inner sep=-1.5ex}]  
								\arrow[r,"m'"', bend right=60, null]   \arrow[r,"n'"{inner sep=0.7ex}, bend left=60, null] \& B
							\end{cd}
							is an invertible null 2-cell in $\N'$;
							\item [(4')] for every  null morphism $A \anull{m} B$ in $\N$ the 2-cell
							\begin{cd}[6][12.5]
								A  \arrow[r,"m'",null] \arrow[r,iso,shift left=2.4ex,xshift=-1.9ex,"{{\Xi}^{-1}_{A,B,m}}"'{pos=0.85, inner sep=-1.5ex}] \arrow[r,iso,shift right=2.5ex,xshift=-1.9ex,"{{\Xi'}^{-1}_{A,B,m'}}"'{pos=0.85, inner sep=-1.5ex}]  
								\arrow[r,"l"', bend right=60, null]   \arrow[r,"m"{inner sep=0.7ex}, bend left=60, null] \& B
							\end{cd}
							is an invertible null 2-cell in $\N$.
						\end{itemize}
					\end{definition}

					\begin{theorem}\label{theorchareq2ideals}
						Let $\L$ be a 2-category and let $\N=(N,\nu)$ and $\N'=(N',\nu')$ be 2-ideals of null morphisms and null 2-cells in $\L$. The following facts are equivalent:
						\begin{itemize}
							\item[(i)] $\N$ is equivalent to $\N'$;
							\item[(ii)] conditions (A) and (B) hold and the isomorphisms $\Xi$ and $\Xi'$ satisfy properties (1), (2), (3) and (4);
							\item[(iii)] conditions (A) and (B) hold and the isomorphisms $\Xi$ and $\Xi'$ satisfy properties (1+ 2), (3) and (4);
							\item[(iv)] conditions (A) and (B) hold and the isomorphisms $\Xi$ and $\Xi'$ satisfy properties (1), (1'), (3), (4) and (4').
						\end{itemize}
					\end{theorem}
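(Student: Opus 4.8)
The plan is to prove the core equivalence $(i)\Leftrightarrow(ii)$ directly, by unpacking the definition of a pseudonatural equivalence together with an invertible modification into the explicit data and axioms (A), (B), (1)--(4); the remaining equivalences $(ii)\Leftrightarrow(iii)$ and $(ii)\Leftrightarrow(iv)$ are then purely formal consequences that only use the invertibility of the $2$-cells $\Xi$, $\Xi'$.

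For $(i)\Rightarrow(ii)$: given a pseudonatural equivalence $\iota\:N\to N'$ and an invertible modification $\Xi\:\nu'\iota\iso\nu$, set $n':=\iota_{A,B}(n)$ and let $\Xi_{A,B,n}$ be the component of $\Xi$ at $n$, which gives (A). Since each $\iota_{A,B}$ is an equivalence of categories it is essentially surjective, so every null $n'$ in $\N'$ receives an isomorphism from some $\iota_{A,B}(m)$ with $m$ null in $\N$; composing in $\Hom{A}{B}$ with the relevant component of $\Xi$ yields (B) and its isomorphism $\Xi'$. Property (1) is then the naturality of the component $\Xi_{A,B}\:\nu'_{A,B}\iota_{A,B}\Rightarrow\nu_{A,B}$ at a null $2$-cell $\alpha$, together with the fact that $\iota_{A,B}$ carries null $2$-cells to null $2$-cells: the pasted $2$-cell of (1) is exactly $\iota_{A,B}(\alpha)$. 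Property (2) holds because $\iota_{A,B}$ is moreover fully faithful, so the pasted $2$-cell of (2) is the unique preimage in $N(A,B)$ of the null $2$-cell $\beta$. Property (3) is the pseudonaturality square of $\iota$ read through the modification axiom that ties it to the pseudonaturality squares of $\nu$ and $\nu'$, and (4) records that the essential-surjectivity isomorphism $n'\iso\iota(m)$ chosen for (B) already lives inside the subcategory of null $2$-cells, i.e.\ is an invertible null $2$-cell in $\N'$.

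For $(ii)\Rightarrow(i)$ one reconstructs $\iota$ and $\Xi$. Define $\iota_{A,B}$ on objects by $n\mapsto n'$ via (A), and on a null $2$-cell $\alpha\:m\Rightarrow n$ by the pasted $2$-cell of (1), which is null in $\N'$ exactly by (1); functoriality is immediate, since inserting a factor $\Xi\c\Xi^{-1}=\id{}$ splits composites. Each $\iota_{A,B}$ is faithful because it acts on $2$-cells by conjugation with invertible $2$-cells; full because, given a null $2$-cell $\beta\:m'\Rightarrow n'$ in $\N'$, its reverse conjugate is a null $2$-cell $m\Rightarrow n$ in $\N$ by (2) and is sent to $\beta$; and essentially surjective because (B) furnishes, for each $n'$, an isomorphism to $\iota(m)$ for some null $m$, which is an invertible null $2$-cell in $\N'$ by (4). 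Hence $\iota_{A,B}$ is an equivalence of categories. Its pseudonaturality $2$-cells are forced: their components are the canonical comparisons assembled from $\nu_{a,b}$, $\nu'_{a,b}$ and $\Xi$, and (3) is precisely the assertion that these comparisons are invertible null $2$-cells in $\N'$, hence define a pseudonatural transformation; the coherence axioms then follow from the pseudofunctoriality isomorphisms of $N$ and $N'$ and the axioms of \thex\ref{theorchar2ideal} for $\N$, $\N'$. Finally $\Xi$ is the modification with components $\Xi_{A,B,n}$: naturality is built in, the modification axiom follows from (3) and (1), and it is invertible since each $\Xi_{A,B,n}$ is.

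The equivalence $(ii)\Leftrightarrow(iii)$ is a formal exercise: restricting (1+2) to null $2$-cells gives (1), while applying (1+2) to the $\Xi$-conjugate of a null $2$-cell $\beta$ of $\N'$ --- and using invertibility to note that conjugating back returns $\beta$ --- gives (2); conversely (1) and (2) supply the ``if'' and ``only if'' halves of (1+2). For $(ii)\Leftrightarrow(iv)$ I would use that condition $(i)$ is symmetric in $(\N,\N')$: a pseudonatural equivalence $\iota$ with chosen pseudo-inverse $\iota'$ and modification $\Xi\:\nu'\iota\iso\nu$ determines a modification $\Xi'\:\nu\iota'\iso\nu'$, so applying the already-proved $(i)\Leftrightarrow(ii)$ to both $(\N,\N')$ and $(\N',\N)$ shows that $(i)$ entails (A), (B), (1), (1'), (4), (4') (and both forms of (3)), in particular $(iv)$; conversely, given $(iv)$ one builds $\iota$ from (A), (1) and $\iota'$ from (B), (1'), checks $\iota'\iota\iso\id{}$ and $\iota\iota'\iso\id{}$ using (4) and (4') so that $\iota$ is automatically an equivalence, and reads off pseudonaturality from (3). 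All the ideas are elementary; the main obstacle is the $2$-categorical bookkeeping inside $(i)\Leftrightarrow(ii)$ --- verifying the coherence axioms for the reconstructed $\iota$ and the modification axiom for $\Xi$, which forces one to paste the pseudofunctoriality isomorphisms of $N$, $N'$ against the structure isomorphisms $\nu_{a,b}$, $\nu'_{a,b}$ in the exact pattern prescribed by (3) and (4) --- and keeping the three ambient notions of null $2$-cell (of $\N$, of $\N'$, and the plain $2$-cells of $\L$) carefully separated throughout.
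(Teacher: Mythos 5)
Your treatment of $(i)\Leftrightarrow(ii)$ and $(ii)\Leftrightarrow(iii)$ matches the paper's proof essentially step for step: the $2$-cell of $(1)$ is $\iota_{A,B}(\alpha)$, $(2)$ is fullness, $(3)$ is the pseudonaturality datum, $(4)$ is essential surjectivity, and the passage between $(1),(2)$ and $(1{+}2)$ is the same conjugation argument. Where you genuinely diverge is in handling $(iv)$: the paper stays at the level of the explicit conditions, proving $(iii)\Rightarrow(iv)$ by showing that $(1{+}2)$ together with $(4)$ forces $(1')$ and $(4')$, and $(iv)\Rightarrow(ii)$ by showing that $(1')$ and $(4')$ recover $(2)$; you instead route through $(i)$, using the symmetry of the notion of equivalence of $2$-ideals to get $(i)\Rightarrow(iv)$, and, for $(iv)\Rightarrow(i)$, reconstructing $\iota$ from $(A),(1)$ and a candidate quasi-inverse $\iota'$ from $(B),(1')$, with $(4)$ and $(4')$ supplying the unit and counit isomorphisms. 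Both routes work; since the theorem is existential in the choice of $\Xi,\Xi'$, nothing is lost by passing through $(i)$. The paper's version is more self-contained at the combinatorial level, while yours makes visible why $(iv)$ is the symmetric form of the condition and lets you skip verifying fullness separately (a functor with a quasi-inverse is automatically an equivalence). The one point you should make explicit is that the isomorphisms furnished by $(4)$ and $(4')$ are \emph{natural} in the null morphism, so that they really assemble into natural isomorphisms $\iota'\iota\iso\operatorname{id}$ and $\iota\iota'\iso\operatorname{id}$; this does follow from the fact that $\iota$ and $\iota'$ act on $2$-cells by conjugation with the $\Xi$'s and $\Xi'$'s, but it is the step that replaces the paper's direct derivation of $(2)$ from $(1')$ and $(4')$ and deserves a sentence.
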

					
					\begin{proof}
						(i) $\Rightarrow $ (ii).  Condition (A) holds thanks to the existence of the invertible modification $\Xi$ and the isomorphisms $\Xi'$ of condition (B) are constructed by pasting the isomorphic diagrams related by $\Xi$ (as in \defx\ref{defeq2ideals}) with the pseudoinverse of the pseudonatural equivalence $\iota$. The 2-cell of property (1) is precisely the $\iota_{A,B}(\alpha)$ and so it is a null 2-cell in $\N'$. Moreover, property (2) is precisely the fullness of the equivalence of categories $\iota_{A,B}$. The 2-cell of property (3) is then precisely given by the action of structure 2-cell  $\iota_{(a,b)}$ on the null morphism $A \anull{n} B$ in $\N$ and thus it is a null 2-cell in $\N'$. Finally, (4) is precisely given by the essential surjectivity of the equivalence of categories $\iota_{A,B}$.
						
						(ii) $\Rightarrow $ (i). (A) allows us to define a function $\iota_{A,B}: N(A,B) \to N'(A,B)$, that then extends to a functor thanks to (A) and (1). $\iota_{A,B}$  is automatically faithful, it full thanks to (2) and it is essentially surjective thanks to (B) and (4). So $\iota_{A,B}$ is an equivalence of categories for every $a,b\in \L$. Property (3) then allows us to define the structure 2-cells of $\iota$ and these are automatically natural transformations. It is then straightforward to see that $\iota$ constructed this way is automatically a pseudonatural transformation. It remains to define the invertible modification $\Xi$.  Property (A) allows us to define the invertible components $\Xi_{A,B,m}$ that form a natural transformation $\Xi_{A,B}$ by construction of the action of $\iota_{A,B}$ on morphisms. Finally, $\Xi$ satisfies the axiom of modification by construction of the structure 2-cells of $\iota$.
						
						(ii) $\Rightarrow $ (iii).  We show that properties (1) and (2) imply property (1+2). Let $\alpha\:m\aR{}n\:A\to B$ be a 2-cell between null morphisms $m$ and $n$ in $\N$. If $\alpha$ is a null 2-cell in $\N$ then clearly the 2-cell of property (1+2) is a null 2-cell in $\N'$ by property (1). Conversely, if such 2-cell is null in $\N'$ then applying property (2) to it we  precisely conclude that $\alpha$ is a null 2-cell in $\N$.
						
						(iii) $\Rightarrow $ (ii). Property (1+2) clearly implies (1). Moreover,  given null morphisms $m,n\: A \anull{}B$ in $\N$ and a null 2-cell $\beta\:m'\aR{}n'\:A\to B$ in $\N'$ (where $m'$ and $n'$ are the morphisms in $\N'$ associated to $m$ and $n$), then the 2-cell of property (2) is such that its pasting with $\Xi$ and $\Xi^{-1}$ gives the null 2-cell $\beta$ in $\N'$ and so it is a null 2-cell in $\N$ by (1+2) .
						
						(iii) $\Rightarrow $ (iv).  We show that (1+2) together with (4) implies both (1') and (4'). Let $\beta\:n'\aR{}s'\:A\to B$ be a null 2-cell in $\N'$. Thanks to (1+2), to prove that (1') holds it suffices to show that the 2-cell of (1') is a null 2-cell in $\N'$ when pasted with $\Xi_{A,B,m}$ and $\Xi^{-1}_{A,B,r}$. And this is true because the obtained 2-cell is the pasting of the null 2-cell $\beta$ in $\N'$ with the invertible null 2-cells in $\N'$ given by (4) applied to $m$ and $r$. So (1') holds. Let now $A \anull{m} B$ be a null morphism in $\N$. Thanks to (1+2), is suffices to show that the 2-cell of (4') is an invertible null 2-cell in $\N'$ once pasted with $\Xi_{A,B,m}$ and $\Xi^{-1}_{A,B,l}$. But this is guaranteed by (4) and so (4') holds.

						(iv) $\Rightarrow $ (ii). We prove that (1') and (4') imply (2).  Let $m,n\: A \anull{}B$ be null morphisms in $\N$ and let $\beta\:m'\aR{}n'\:A\to B$ be a null 2-cell in $\N'$ (where $m'$ and $n'$ are the morphisms in $\N'$ associated to $m$ and $n$). Thanks to (1') the 2-cell $\beta$ pasted with ${\Xi'}^{-1}_{A,B,m'}$ and ${\Xi'}^{-1}_{A,B,n'}$ gives a null 2-cell in $\N$. Pasting this null 2-cell in $\N$ with the invertible null 2-cells given by property (4') for $m'$ and $n'$ we obtain the 2-cell involved in property (2) and thus we conclude that it is a null 2-cell in $\N$.
					\end{proof}
					
					We now prove that equivalent 2-ideals have same 2-kernels and 2-cokernels.
					
					\begin{theorem}\label{teorequividealshavesameker}
						Let $\L$ be a 2-category and let $\N=(N,\nu)$ and $\N'=(N',\nu')$ be equivalent 2-ideals of null morphisms and null 2-cells in $\L$, via a pseudonatural equivalence $\iota$ and an invertible modification $\Xi$. Then $\N$ and $\N'$ have same 2-kernels and 2-cokernels.
						
						More precisely, if $k\:K\to A$ is the $\N$-2-kernel of a morphism $f\:A\to B$ in $\L$ with structure 2-cell
						\begin{cd}
							K\ar[rr,null,bend right=35,"n"'{inner sep=0.8ex}]\ar[rr,iso,shift right=2ex,"\alpha"'{pos=0.6}]\ar[r,"k"] \& A \ar[r,"f"] \& B
						\end{cd}
						then $k$ is also the $\N'$-2-kernel of $f$, with structure 2-cell
						\begin{cd}[6][7.5]
							K \ar[rr,null,bend right=27,"n"'{inner sep=0.2ex, pos=0.42}] \ar[rr,null,bend right=85,"n'"'{inner sep=0.8ex}]\ar[rr,iso,shift right=2ex,"\alpha"'{pos=0.6, inner sep=-0.5ex}] \ar[rr,iso,shift right=5.6ex,xshift=-0.1ex,"\Xi^{-1}_{K,B,n}"'{pos=0.7, inner sep=-0.8ex}]\ar[r,"k"] \& A \ar[r,"f"] \& B
						\end{cd}
					\end{theorem}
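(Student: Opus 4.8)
The plan is to transport the universal property of the $\N$-2-kernel $k\colon K\to A$ of $f$ across the equivalence of 2-ideals, using the explicit description of such an equivalence provided by \thex\ref{theorchareq2ideals} --- in particular conditions (A), (B), (1+2), (3), (4) --- together with the modification axiom for $\Xi$.

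First one checks that the proposed $\N'$-data is legitimate. By condition (A) there exist a null morphism $n'\colon K\anull{}B$ in $\N'$ and an invertible $2$-cell $\Xi_{K,B,n}\colon n\aR{}n'$, and pasting $\alpha$ with $\Xi^{-1}_{K,B,n}$ yields an isomorphic $2$-cell $f\c k\aR{}n'$ into a null morphism of $\N'$; this is precisely the displayed structure $2$-cell. It remains to verify the $1$-dimensional and $2$-dimensional universal properties of \defx\ref{deftwokernel} for this $\N'$-2-kernel.

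For the $1$-dimensional property, let $z\colon Z\to A$ be such that $f\c z$ is isomorphic to a null morphism $\bar n'$ of $\N'$ via some $\beta'$. By condition (B), $\bar n'$ is isomorphic to a null morphism $\bar m$ of $\N$ through an invertible $2$-cell, so $f\c z$ becomes isomorphic to a null morphism of $\N$; the $1$-dimensional universal property of the $\N$-2-kernel then produces $u\colon Z\to K$ and $\gamma\colon z\aR{}k\c u$ for which the relevant pasting $P$ is an invertible null $2$-cell in $\N$. The $\N'$-version $P'$ of this pasting is obtained from $P$ by replacing its $\nu$-witnesses by the corresponding $\nu'$-witnesses and inserting the coherence cells coming from (A) and (B) in the appropriate slots; the modification axiom for $\Xi$ (equivalently condition (3)) identifies the $\nu$/$\nu'$ discrepancy with a pasting of $\Xi$-components, so that $P'$ arises from $P$ by conjugation with invertible null $2$-cells of $\N'$, and condition (1+2) upgrades the nullity of $P$ in $\N$ to the nullity of $P'$ in $\N'$. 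This yields the $1$-dimensional universal property; condition (4) guarantees the coherence of the round-trips through (A)/(B) so that the $2$-cell $\gamma$ fits as required.

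For the $2$-dimensional property, let $u,v\colon Z\to K$ and $\lambda\colon k\c u\aR{}k\c v$ be such that the pasting ${\Xi'}^{\lambda}$ formed from $\lambda$, the $\nu'$-witnesses and the $\N'$-structure $2$-cell is a null $2$-cell in $\N'$. Running the same conjugation argument in reverse --- converting $\nu'$-witnesses into $\nu$-witnesses via the modification axiom and using (1+2) to descend nullity from $\N'$ to $\N$ --- shows that the pasting $\Xi^{\lambda}$ formed from $\lambda$, the $\nu$-witnesses and $\alpha$ is a null $2$-cell in $\N$. The $2$-dimensional universal property of the $\N$-2-kernel then supplies the unique $2$-cell $\mu\colon u\aR{}v$ with $k\star\mu=\lambda$, and uniqueness is automatic because the equation $k\star\mu=\lambda$ does not mention the ideal. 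The assertion for $2$-cokernels follows by duality, and the reverse inclusion --- that $\N'$-2-kernels are $\N$-2-kernels --- follows from the symmetry of equivalence of $2$-ideals, applying the above to a pseudoinverse of $\iota$ and the inverse modification. The only genuine labour is the bookkeeping in the two conjugation arguments: matching the large pasting diagrams that define the universal properties and checking that each slot containing $\N$-data in one diagram is filled in the other by the $\Xi$- or $\Xi'$-conjugate together with exactly the coherence $2$-cell dictated by the modification axiom; beyond \thex\ref{theorchareq2ideals}, no new idea is required.
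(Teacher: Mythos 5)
Your proposal is correct and follows essentially the same route as the paper: both directions of the universal property are transported across the equivalence of 2-ideals using the explicit conditions (A), (B), (1)/(2) (equivalently your (1+2)), (3) and (4) from \thex\ref{theorchareq2ideals}, with condition (B) used to convert the hypothesis into $\N$-data, the $\N$-kernel's universal property applied, and the conclusion conjugated back into $\N'$ via the $\Xi$-components. The only cosmetic difference is that the paper invokes properties (1) and (2) separately where you invoke (1+2), which the cited theorem shows to be equivalent.
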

					\begin{proof}
						Let $l\:L\to A$ be a morphism such that $f\c l$ is isomorphic to a null morphism $s'$ in $\N'$, via an isomorphic 2-cell $\beta$. Pasting with $\Xi'_{L,B,s'}$, we obtain that $f\c l$ is also isomorphic to a null morphism $r$ in $\N$. Then, by the universal property of the $\N$-2-kernel, there exist a morphism $L \aar{u} K$ and an isomorphic 2-cell $\gamma\: l \aR{} k \c u$ such that the pasting
						\begin{cd}[6.3][6.3]
							{L} \ar[rrrd,null,bend left=65,"r",start anchor = north east, end anchor = north]\arrow[rrrd,""{name=C}, null, bend right=69] \arrow[rd,"{\exists u}"'{name=A}, dashed] \arrow[rrd,"l"{name=B}, bend left=20] \arrow[rrrd,"{s'}"'{pos=0.57,inner sep=0.17ex}, null, bend left] \&[-2ex] {} \&[-2ex] {} \&[-2ex] {}\\[-4ex]
							{} \& {K} \ar[rr,iso,shift left=12.4,"{\Xi'_{L,B,s'}}"{pos=0.68,inner sep=0.75ex}] \arrow[r,"{\exists \gamma}"'{pos=0.65}, iso, shift left= 3ex, start anchor={[xshift=-8ex]}] 
							\arrow[r,"{\nu}"'{pos=0.65}, iso, shift right= 2.8ex, start anchor={[xshift=-8ex]}] 
							\arrow[r,"k"] \arrow[rr,""{name=D}, bend right=42, null] \& {A} \arrow[r,"{\beta}"'{pos=0.65}, iso, shift left= 3.3ex, start anchor={[xshift=-8ex]}] \arrow[r,"{\alpha}"'{pos=0.65}, iso, shift right= 2ex, start anchor={[xshift=-8ex]}]  \arrow[r,"f"] \& {B}
						\end{cd}
						is an invertible null 2-cell in $\N$. Since $\N$ and $\N'$ are equivalent 2-ideals, by \thex\ref{theorchareq2ideals} properties (1), (2), (3) and (4) of \def\ref{listofproperties} all hold. Applying property (1) to the invertible null 2-cell in $\N$ above and pasting with the invertible null 2-cells of properties (4) and (3), we obtain that
						\begin{cd}[7][10]
							{L}  \arrow[rrrd,""{name=C}, null, bend right=90, ""]  \arrow[rd,"{u}"'{name=A}] \arrow[rrd,"z"{name=B}, bend left=20] \arrow[rrrd,"s'", null, bend left] \&[-2ex] {} \&[-2ex] {} \&[-2ex] {}\\[-4ex]
							{} \arrow[r,iso, shift right=3ex, xshift=2ex,"\nu'"'{pos=0.6}] \& {K} \arrow[r,"{ \gamma}"'{pos=0.6}, iso, shift left= 3ex, start anchor={[xshift=-10ex]}] 
							\arrow[r,"k"] \arrow[rr,""{name=D}, bend right=30, null] \arrow[rr,""{name=D}, bend right=66, null]\& {A} \arrow[r,"{\beta}"'{pos=0.6}, iso, shift left= 3.3ex, start anchor={[xshift=-10ex]}] \arrow[r,"{\alpha}"'{pos=0.6, inner sep=-0.8ex}, iso, shift right= 1.8ex, start anchor={[xshift=-10ex]}] \arrow[r,xshift=-0.9ex,"{\Xi^{-1}_{K,B,n}}"'{pos=0.72, inner sep=-3.8ex}, iso, shift right= 6ex, start anchor={[xshift=-10ex]}] 
							\arrow[r,"f"] \& {B}
						\end{cd}
						is an invertible null 2-cell in $\N'$. So that the 1-dimensional universal property of $k$ being the $N'$-2-kernel of $f$ holds.
						
						Consider now morphisms $u,v\:L\to K$ and a 2-cell $\lambda\:k\c u\aR{}k\c v$ such that $f\ast \lambda$, pasted with $\alpha,\alpha^{-1},\Xi^{-1}_{K,B,n},\Xi_{K,B,n}$ and copies of $\nu'$, gives a null 2-cell in $\N'$. Applying properties (3) and (2) of \defx\ref{listofproperties}, it is straightforward to show that $f\ast \lambda$ also gives, when pasted with $\alpha,\alpha^{-1}$ and copies of $\nu$, a null 2-cell in $\N$. Then, by the 2-dimensional universal property of the $\N$-2-kernel, there exists a unique 2-cell $\mu\:u\aR{}v$ such that $k\ast \mu=\lambda$. So that the 2-dimensional universal property of the $\N'$-2-kernel holds.
					\end{proof}

					\section{Grandis exact 2-categories}
					
					It is shown in \cite{JanWei16} that Grandis exact categories can be equivalently characterized as categories equipped with a proper factorization system such that the opfibration of subobjects relative to the factorization system is isomorphic to the fibration of relative quotients. In this section, we develop a 2-dimensional analogue of this fibrational approach to exactness. As an outcome, we propose a 2-dimensional notion of Grandis exact category. In particular we obtain a 2-dimensional notion of Puppe exact category, in the pointed case, which we will compare in the next section with existing similar notions developed by Dupont and Nakaoka.
					
					\begin{remark}
						We will need the notion of factorization system $(\E,\M)$ on a 2-category, studied by Dupont and Vitale in \cite{DupVit03}. We will also use their notion of $(1,1)$-proper factorization system on a 2-category. This means that every morphism in $\E$ is cofaithful and every morphism in $\M$ is faithful.
					\end{remark}
					
					We will then need the following notion of 2-dimensional fibration, which is a weaker version of the bicategorical fibrations studied by Bakovi\'{c} in \cite{Bak22}. More precisely, we will only ask weak 2-fibrations to be locally an isofibration rather than a fibration.
					
					\begin{definition}
						Let $P\:\mathfrak{K}\to {\L}$ be a 2-functor. We call $P$ a \dfn{weak 2-fibration} if it satisfies the following conditions:
						\begin{itemize}
							\item[-] for every $E\in \mathfrak{K}$ and every morphism $f\:L\to P(E)$ in $\L$, there exists a 2-cartesian lifting of $f$ to $E$, in the bicategorical sense described by Bakovi\'{c} in \cite[\defx{4.6}]{Bak22};
							\item[-] $P$ is locally an isofibration, i.e.\ for every $E,E'\in \mathfrak{K}$ the induced functor $\HomC{\mathfrak{K}}{E}{E'}\to \HomC{\L}{P(E)}{P(E')}$ between hom-categories is an isofibration.
						\end{itemize}
						
						Dually, we can define \dfn{weak 2-opfibrations}.
					\end{definition}
					
					We generalize to dimension 2 the important fact that subobjects relative to a factorization system form an opfibration and quotients relative to a factorization system form a fibration. For this, we will need the notion of pseudo arrow category of a 2-category $\L$, whose objects are arrows in $\L$, morphisms are squares in $\L$ filled with an isomorphism and 2-cells are pairs of two 2-cells in $\L$ coherent with the isomorphisms filling the squares.
					
					\begin{theorem}\label{teorfibofsubobj}
						Let $(\E,\M)$ be a factorization system on a 2-category $\L$. Consider $\Esq$ and $\Msq$ the full sub-2-categories of the pseudo arrow category of $\L$ on morphisms in $\E$ and $\M$ respectively. Then the domain 2-functor $\operatorname{dom}\:\Esq\to \L$ is a weak 2-fibration, called \dfn{the weak 2-fibration of 2-quotients} relative to $(\E,\M)$.
						
						Dually, the codomain 2-functor $\operatorname{cod}\:\Msq\to \L$ is a weak 2-opfibration, called \dfn{the weak 2-opfibration of 2-subobjects} relative to $(\E,\M)$.
					\end{theorem}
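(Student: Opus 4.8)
The plan is to verify directly the two defining conditions of a weak $2$-fibration for the $2$-functor $\operatorname{dom}\:\Esq\to\L$; the statement for $\operatorname{cod}\:\Msq\to\L$ then follows by a dual argument, equivalently by applying the first statement to the factorization system $(\M\op,\E\op)$ on $\L\op$ and dualizing, since there the domain $2$-functor corresponds to $\operatorname{cod}$ and weak $2$-fibrations become weak $2$-opfibrations under $(-)\op$.

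\emph{Existence of $2$-cartesian liftings.} Fix an object $e\:X\to Y$ of $\Esq$, so that $e\in\E$ and $\operatorname{dom}(e)=X$, together with a morphism $f\:L\to X$ of $\L$. Factorizing the composite $e\c f\:L\to Y$ through $(\E,\M)$ gives $L\aar{\bar e}\bar L\aar{m}Y$ with $\bar e\in\E$, $m\in\M$, and an isomorphism $m\c\bar e\iso e\c f$. The candidate $2$-cartesian lifting of $f$ to $e$ is the object $\bar e$ of $\Esq$, which indeed satisfies $\operatorname{dom}(\bar e)=L$, equipped with the morphism of the pseudo arrow category whose square has top $f$, left leg $\bar e$, right leg $e$ and bottom $m$, filled by the above isomorphism. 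Unwinding \cite[\defx{4.6}]{Bak22}, one must check that for every object $e''$ of $\Esq$ the canonical comparison functor from $\HomC{\Esq}{e''}{\bar e}$ to the pseudo-pullback of the cospan $\HomC{\Esq}{e''}{e}\to\HomC{\L}{\operatorname{dom}(e'')}{X}\leftarrow\HomC{\L}{\operatorname{dom}(e'')}{L}$ is an equivalence of categories. The key point is that an object $e''$ of $\Esq$ is itself a morphism of $\L$ lying in $\E$: given a morphism $e''\to e$ of $\Esq$ with components $g\:\operatorname{dom}(e'')\to X$, $k\:\operatorname{cod}(e'')\to Y$ (and filling isomorphism) together with a morphism $h\:\operatorname{dom}(e'')\to L$ and an isomorphism $g\iso f\c h$, the outer square with top $\bar e\c h$, left leg $e''$, right leg $m$ and bottom $k$ commutes up to a canonical isomorphism. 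The $\E$--$\M$ orthogonality of the pair $(e'',m)$ in the $2$-dimensional sense of \cite{DupVit03} then supplies an essentially unique diagonal $d\:\operatorname{cod}(e'')\to\bar L$ with coherent isomorphisms $d\c e''\iso\bar e\c h$ and $m\c d\iso k$, and this $d$ is precisely the bottom component of the sought lift $e''\to\bar e$. Essential surjectivity of the comparison functor is then the existence half of this orthogonality, while its fullness and faithfulness are the uniqueness ($2$-dimensional) half. The $2$-cells of $\Esq$ are treated in the same way, using that orthogonality in the sense of \cite{DupVit03} yields unique fillers at the level of $2$-cells as well.

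\emph{Local isofibration.} Fix $e\:X\to Y$ and $e'\:X'\to Y'$ in $\E$. An object of $\HomC{\Esq}{e}{e'}$ is a triple $(a,b,\phi)$ with $a\:X\to X'$, $b\:Y\to Y'$ and $\phi\:e'\c a\iso b\c e$, and the functor induced by $\operatorname{dom}$ sends it to $a$. Given an isomorphism $\theta\:a\iso a''$ in $\HomC{\L}{X}{X'}$, the triple $(a'',b,\phi\c(e'\star\theta^{-1}))$ is an object of $\HomC{\Esq}{e}{e'}$ lying over $a''$, and the pair $(\theta,\id{b})$ is an isomorphism $(a,b,\phi)\iso(a'',b,\phi\c(e'\star\theta^{-1}))$ lying over $\theta$; the required coherence reduces to the identity $\bigl(\phi\c(e'\star\theta^{-1})\bigr)\c(e'\star\theta)=\phi$. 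Hence this functor on hom-categories is an isofibration, and $\operatorname{dom}\:\Esq\to\L$ is a weak $2$-fibration.

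The step I expect to be the main obstacle is checking that the comparison functor above is an equivalence: this requires unwinding Bakovi\'{c}'s bicategorical notion of $2$-cartesian morphism, keeping careful track of all the isomorphisms filling the squares of the pseudo arrow category together with their coherences, and matching the full $2$-dimensional (uniqueness) part of the universal property with the $2$-dimensional orthogonality of \cite{DupVit03}. The local-isofibration condition and the dualization are routine.
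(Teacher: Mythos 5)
Your proposal is correct and follows essentially the same route as the paper: the (co)cartesian lift is built from the $(\E,\M)$-factorization of the composite, its universal property is verified via the 1- and 2-dimensional diagonal lifting (orthogonality) property of \cite{DupVit03}, and the local isofibration condition is checked by transporting the filling isomorphism. The only cosmetic difference is that the paper proves the $\operatorname{cod}\:\Msq\to\L$ half directly and obtains the $\operatorname{dom}$ half by duality, whereas you argue on the $\Esq$ side.
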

					\begin{proof}
						We prove that $\operatorname{cod}\:\Msq\to \L$ is a weak-2-opfibration. Then $\operatorname{dom}\:\Esq\to \L$ will be a weak 2-fibration, dually. Of course $\operatorname{cod}$ is a 2-functor, being a restriction of the codomain 2-functor from the pseudo arrow category. Consider $m\:A\tom B$ in $\M$ and a morphism $f\:B\to C$ in $\L$, we construct the 2-cocartesian lifting of $f$ to $m$ using the factorization of $f\c m$:
						\begin{cd}
							A \ar[d,"m"',rightarrowtail,""{name=A}] \ar[r,"{\ell^{f\c m}}",twoheadrightarrow] \& Q\ar[d,"{r^{f\c m}}",""'{name=B},rightarrowtail] \\
							B \ar[r,"f"']\& C
							\ar[from=A,to=B,iso,"{\theta_{f\c m}^{-1}}"{inner sep=1ex},shift right=0.8ex]
						\end{cd}
						We show that such morphism in $\Msq$ is indeed 2-cocartesian. So consider $(z,h,\phi)\:m\to n$ in $\Msq$ with $n\:X\tom Y$, a morphism $g\:C\to Y$ in $\L$ and an isomorphic 2-cell $\xi\:h\to g\c f$ in $\L$. Then the diagonal lifting property of the factorization system $(\E,\M)$ induces a fill-in for the square on the left below:
						\[\begin{tikzcd}
							A && X \\
							& B \\
							Q & C & Y
							\arrow["z", from=1-1, to=1-3]
							\arrow["m", tail, from=1-1, to=2-2]
							\arrow[""{name=0, anchor=center, inner sep=0}, "{\ell^{f\c m}}"', two heads, from=1-1, to=3-1]
							\arrow[""{name=1, anchor=center, inner sep=0}, "n", tail, from=1-3, to=3-3]
							\arrow[""{name=2, anchor=center, inner sep=0}, "f"', from=2-2, to=3-2]
							\arrow[""{name=3, anchor=center, inner sep=0}, "h", from=2-2, to=3-3]
							\arrow["{r^{f\c m}}"', tail, from=3-1, to=3-2]
							\arrow["g"', from=3-2, to=3-3]
							\arrow["{\theta_{f\c m}}"'{inner sep=1ex}, shift right=4, shorten <=5pt, iso, from=0, to=2-2]
							\arrow["\phi"{inner sep=1ex,pos=0.4}, shift left=3, shorten >=5pt, iso, from=2-2, to=1,start anchor={[xshift=-1.5ex]}]
							\arrow["\xi"'{inner sep=1ex}, shift right, shorten <=3pt, shorten >=3pt, iso, from=2, to=3]
						\end{tikzcd} \quad \h[2] = \quad
						\begin{tikzcd}
							A && X \\
							\\
							Q & C & Y
							\arrow["z", from=1-1, to=1-3]
							\arrow[""{name=0, anchor=center, inner sep=0}, "{\ell^{f\c m}}"', two heads, from=1-1, to=3-1]
							\arrow[""{name=1, anchor=center, inner sep=0}, "n", tail, from=1-3, to=3-3]
							\arrow[""{name=2, anchor=center, inner sep=0}, "d"', from=3-1, to=1-3]
							\arrow["{r^{f\c m}}"', tail, from=3-1, to=3-2]
							\arrow["g"', from=3-2, to=3-3]
							\arrow["\alpha"{inner sep=1ex}, shift left=3, iso, from=0, to=2]
							\arrow["\beta"'{inner sep=1ex}, shift right=3, iso, from=2, to=1]
						\end{tikzcd}
						\]
						Whence $(\alpha,\xi)$ is an isomorphic 2-cell in $\Msq$ between $\beta\c \theta_{f\c m}^{-1}$ and $\phi$.
						
						Consider now $(z,h,\phi),(z',h',\phi')\:m\to n$ in $\Msq$, $g,g'$ and $\xi,\xi'\:h\to g\c f$. And take a 2-cell $(\sigma,\tau)$ from $\phi$ to $\phi'$ in $\Msq$ and a 2-cell $\rho\:g\aR{}g'$ such that 
						\begin{eqD*}
							\begin{cd}*
								B \ar[rr,iso,shift left=1.85ex,"\xi"{inner sep=1ex},start anchor={[xshift=-2.3ex]}]\ar[r,"f"']\ar[rr,bend left=40,"h"]\& C \ar[r,bend left=27,"g",""{name=A}]\ar[r,bend right=27,"g'"',""'{name=B}] \& Y
								\ar[from=A,to=B,"\rho",Rightarrow,shorten <=0.5ex,shorten >=0.5ex]
							\end{cd}\quad \h[2] = \quad
							\begin{cd}*[4.5]
								B \ar[rr,iso,shift right=3.5ex,"\xi'"'{inner sep=1ex,pos=0.5}] \ar[rd,"f"']\ar[rr,bend left=18,"h",""'{name=A}]\ar[rr,bend right=18,"h'"'{pos=0.35,inner sep=0.1ex},""{name=B}]\&\& Y \\
								\& C \ar[ru,"g'"']
								\ar[from=A,to=B,"\tau",Rightarrow,shorten <=0ex,shorten >=0ex]
							\end{cd}
						\end{eqD*}
						Consider then also $d,\alpha,\beta$ associated to $\phi,g,\xi$ as above, and $d',\alpha',\beta'$ analogously associated to $\phi',g',\xi'$. We need to show that there exists a unique 2-cell $(\lambda,\rho)$ in $\Msq$ from $(d,g,\beta)$ to $(d',g',\beta')$ such that 
						\begin{eqD*}
							\begin{cd}*
								A \ar[rr,iso,shift left=1.85ex,"\alpha"{inner sep=1ex},start anchor={[xshift=-2.3ex]}]\ar[r,"{\ell^{f\c m}}"']\ar[rr,bend left=40,"z"]\& Q \ar[r,bend left=27,"d",""{name=A}]\ar[r,bend right=27,"d'"',""'{name=B}] \& X
								\ar[from=A,to=B,"\lambda",Rightarrow,shorten <=0.5ex,shorten >=0.5ex]
							\end{cd}\quad \h[2] = \quad
							\begin{cd}*[4.5]
								A \ar[rr,iso,shift right=3.5ex,"\alpha'"'{inner sep=1ex,pos=0.5}] \ar[rd,"{\ell^{f\c m}}"'{inner sep=0.1ex}]\ar[rr,bend left=18,"z",""'{name=A}]\ar[rr,bend right=18,"z'"'{pos=0.35,inner sep=0.1ex},""{name=B}]\&\& X \\
								\& Q \ar[ru,"d'"']
								\ar[from=A,to=B,"\sigma",Rightarrow,shorten <=0ex,shorten >=0ex]
							\end{cd}
						\end{eqD*}
						Notice that $(\sigma,\rho\ast r^{f\c m})$ is a 2-cell from the square formed by $\alpha$ and $\beta$ to the square formed by $\alpha'$ and $\beta'$. By the 2-dimensional diagonal lifting property, there exists a unique 2-cell $\lambda\:d\aR{}d'$ that works. We conclude that $(\ell^{f\c m},f,\theta_{f\c m}^{-1})$ is a 2-cartesian lifting of $f$ to $m$.
						
						It remains to prove that $\operatorname{cod}\:\Msq\to \L$ is locally an isofibration. So consider a morphism
						\begin{cd}
							A \ar[d,"m"',rightarrowtail,""{name=A}] \ar[r,"{a}"] \& A'\ar[d,"{m'}",""'{name=B},rightarrowtail] \\
							B \ar[r,"b"']\& B'
							\ar[from=A,to=B,iso,"\phi"{inner sep=1ex},shift right=0.8ex]
						\end{cd}
						and an isomorphic 2-cell $\beta\:b\aR{}b'\:B\to B'$. It is straightforward to show that we can lift $\beta$ to an isomorphic 2-cell in $\Msq$ with domain $(a,b,\phi)$.
					\end{proof}
					
					We are now ready to prove the main theorem of this section, which provides a fibrational approach to define \predfn{Grandis exact 2-categories}.
					
					\begin{theorem}\label{theorchargrandistwocat}
						Let $\L$ be a 2-category. The following conditions are equivalent:
						\begin{enumT}
							\item There exists a $(1,1)$-proper factorization system $(\E,\M)$ on the 2-category $\L$ and a biequivalence over $\L$
							\begin{cd}
								\Esq \ar[rd,"\operatorname{dom}"'{inner sep=0.2ex}]\ar[rr,bend left=16,"K"]\ar[rr,simeq]\&\& \Msq \ar[ld,"\operatorname{cod}"{inner sep=0.2ex}]\ar[ll,bend left=16,"C"]\\
								\& \L
							\end{cd}
							between the weak 2-fibration of 2-quotients relative to $(\E,\M)$ and the weak 2-opfibration of 2-subobjects relative to $(\E,\M)$, with $K,C$ normal pseudofunctors.
							\item $\L$ has a 2-ideal $\N$ of null morphisms and null 2-cells such that
							\begin{itemize}
								\item[-] $\L$ has all $\N$-2-kernels and $\N$-2-cokernels;
								\item[-] $\N$ is a closed 2-ideal;
								\item[-] every $\N$-2-kernel $m$ is an $\N$-2-kernel of its $\N$-2-cokernel $c_m$, with structure isomorphic 2-cell between $c_m\c m$ and a null morphism given up to an invertible null 2-cell by the $\N$-2-cokernel $c_m$ of $m$; and dually for $\N$-2-cokernels;
								\item[-] every morphism $f$ in $\L$ factorizes up to isomorphism as an $\N$-2-cokernel followed by an $\N$-2-kernel
								\begin{cd}[0]
									A \ar[rr,iso,shift right=0.75 ex]\ar[rr,bend left=10,"f"]\ar[rd,twoheadrightarrow,"2-\operatorname{coker}"'{pos=0.7}]\&\& B \\
									\& Q \ar[ru,rightarrowtail,"2-\operatorname{ker}"'{pos=0.3}]
								\end{cd}
							\end{itemize}
						\end{enumT}
					\end{theorem}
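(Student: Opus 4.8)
The plan is to prove the two implications separately, following the $1$-dimensional template of \cite{JanWei16} while keeping careful track of the coherence $2$-cells that the bicategorical setting forces on us, and using freely the results of Section~\ref{sectiontwoideals}.

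For $(ii)\Rightarrow(i)$, take $\M$ to be the class of morphisms isomorphic, via an invertible $2$-cell, to an $\N$-$2$-kernel, and $\E$ the class of those isomorphic to an $\N$-$2$-cokernel. Closure of $\E$ and $\M$ under composition with equivalences and under pasting with invertible $2$-cells is routine, and the last clause of $(ii)$ is exactly the existence of an $(\E,\M)$-factorization of every morphism, so the real content is the bicategorical diagonal lifting property of \cite{DupVit03}. Given a square formed by an $\E$-morphism $e$ on the left and an $\M$-morphism $m$ on the right, I would obtain the fill-in by precomposing with the $\N$-$2$-kernel of which $e$ is the $\N$-$2$-cokernel (the ``$2$-cokernel of its $2$-kernel'' clause of $(ii)$), using closedness of $\N$ --- that $\N$-$2$-kernels reflect null morphisms --- to see that the resulting composite is isomorphic to a null morphism, and then invoking the $1$-dimensional universal property of $e$ as an $\N$-$2$-cokernel; compatibility of the fill-in with $m$ uses that $e$ is cofaithful, and the $2$-dimensional uniqueness of the fill-in, both for morphisms and for $2$-cells, follows from the $2$-dimensional universal properties of $\N$-$2$-kernels and $\N$-$2$-cokernels together with \prox\ref{propkerarefaith}. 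That proposition also gives $(1,1)$-properness, since $\M$-morphisms are faithful and $\E$-morphisms are cofaithful. Finally, I would define $K\:\Esq\to\Msq$ on an object $e\:A\to Q$ of $\Esq$ by choosing an $\N$-$2$-kernel $K_e\to A$ of $e$, and $C\:\Msq\to\Esq$ on $m\:S\to A$ by choosing an $\N$-$2$-cokernel $A\to C_m$ of $m$; on morphisms and $2$-cells these are dictated by the bicategorical universal properties of $2$-(co)kernels, which is precisely what makes $K$ and $C$ pseudofunctors, and they can be normalised so as to preserve identities strictly. Both lie over $\L$, since $\operatorname{cod}\circ K=\operatorname{dom}$ and $\operatorname{dom}\circ C=\operatorname{cod}$ strictly, and the ``$2$-kernel of its $2$-cokernel'' clause of $(ii)$ together with its dual supply pseudonatural equivalences $C\circ K\simeq\Id{\Esq}$ and $K\circ C\simeq\Id{\Msq}$ over $\L$, their prescribed structure $2$-cells providing the invertible modifications; hence $K$ is a biequivalence over $\L$ with pseudoinverse $C$, both normal.

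For $(i)\Rightarrow(ii)$, I would recover the $2$-ideal as follows: call an object $Z$ of $\L$ a \emph{null object} when the $\M$-morphism $K(\id{Z})$ (which has codomain $Z$, since $K$ is over $\L$) is an equivalence of $\L$, call a morphism null when the middle object of its $(\E,\M)$-factorization is a null object, and define null $2$-cells analogously; one then checks, via \thex\ref{theorchar2ideal}, that this yields a $2$-ideal $\N$, with pseudonaturality of $\nu$ coming from $K,C$ being pseudofunctors over $\L$ and normality from their being normal. The heart of this direction is the lemma that for every $e\in\E$ the morphism $K(e)$ is an $\N$-$2$-kernel of $e$ --- its relative universal property in the sense of \defx\ref{deftwokernel} being exactly the $(\E,\M)$-diagonal fill-in read through the definition of ``null'' --- and dually that $C(m)$ is an $\N$-$2$-cokernel of $m\in\M$; since $K$ and $C$ are mutually pseudoinverse, this identifies $\M$ with the class of $\N$-$2$-kernels and $\E$ with that of $\N$-$2$-cokernels. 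Closedness of $\N$ is then immediate: if $k\circ f$ is null with $k\in\M$, then $k\circ m_f\in\M$ is the $\M$-part of the $(\E,\M)$-factorization of $k\circ f$, where $m_f$ is that of $f$, so $k\circ f$ and $f$ share their middle object and hence $f$ is null; similarly for $2$-cells and for $\N$-$2$-cokernels. The remaining clauses of $(ii)$ --- existence of all $\N$-$2$-kernels and $\N$-$2$-cokernels, each $\N$-$2$-kernel being the $\N$-$2$-kernel of its $\N$-$2$-cokernel with the prescribed structure $2$-cell, and the $2$-cokernel-then-$2$-kernel factorization of every morphism --- drop out of the factorization system and of $K,C$ being mutually pseudoinverse. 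Along the way, \thex\ref{teorequividealshavesameker} and \thex\ref{theorchareq2ideals} let me pass freely between the $2$-ideal $\N$ constructed above and any $2$-ideal equivalent to it without disturbing $2$-kernels, $2$-cokernels, $\M$, or $\E$.

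I expect the main obstacle to lie in the coherence bookkeeping of $(ii)\Rightarrow(i)$: assembling $K$ and $C$ into \emph{normal} pseudofunctors and producing the invertible modifications witnessing $C\circ K\simeq\Id{\Esq}$ and $K\circ C\simeq\Id{\Msq}$ out of the prescribed structure $2$-cells of the third clause of $(ii)$, together with reading off the $2$-dimensional part of the $(\E,\M)$-diagonal lifting property from the $2$-dimensional universal properties of $\N$-$2$-kernels and $\N$-$2$-cokernels; this is where the genuinely $2$-categorical work --- as opposed to a transcription of \cite{JanWei16} --- is concentrated.
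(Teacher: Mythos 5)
Your overall architecture coincides with the paper's: for $(ii)\Rightarrow(i)$ you take $\E,\M$ to be the $\N$-2-cokernels and $\N$-2-kernels, derive orthogonality from closedness plus the universal properties, and build $K,C$ by taking 2-(co)kernels; for $(i)\Rightarrow(ii)$ you extract the 2-ideal from $K(\id{})$, prove that $K(e)$ is a 2-kernel via the biequivalence and the diagonal fill-in, and identify $\M$ and $\E$ with the 2-kernels and 2-cokernels. However, there is a genuine gap in your treatment of closedness in $(i)\Rightarrow(ii)$. You claim it is ``immediate'' because, for $k\in\M$ and $k\c f$ null, the $(\E,\M)$-factorizations of $k\c f$ and $f$ share their middle object, so $f$ is null. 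This is only the $1$-dimensional shadow of what must be proved. By \defx\ref{defreflnull}, ``$k$ reflects null morphisms'' requires not merely that $f$ be isomorphic to a null morphism, but that one produce an isomorphic 2-cell $\psi\:f\aR{}\widehat{\psi}$ whose pasting with the structure isomorphism $\nu$ agrees with the \emph{given} 2-cell $\delta\:k\c f\aR{}n$ up to an invertible null 2-cell; and ``reflects null 2-cells'' is a further, separate condition. Your image argument produces no candidate $\psi$ compatible with $\delta$, and ``similarly for 2-cells'' hides an argument that in the paper occupies two substantial paragraphs (running the universal property of the $\N$-2-cokernel $c_{\id{D}}$ together with the 1- and 2-dimensional diagonal lifting properties to manufacture $\psi$ and to reflect 2-cells). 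Without this, the closedness clause of $(ii)$ is not established.

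Two secondary points. First, in the orthogonality argument of $(ii)\Rightarrow(i)$ you obtain $d$ with $d\c e\iso s$ from the universal property of the 2-cokernel $e$ and then assert that the compatibility $m\c d\iso t$ ``uses that $e$ is cofaithful.'' Cofaithfulness of $e$ makes $(-)\c e$ faithful, which gives uniqueness of a 2-cell $m\c d\aR{}t$ but not its existence; you must instead invoke the 2-dimensional universal property of the 2-cokernel $e$ (or, as in the paper's dual route, of the 2-kernel $m$), which requires first checking that the relevant comparison 2-cell is null --- this is exactly where the ``(co)reflects null 2-cells'' half of closedness enters, and it is absent from your sketch. Second, your definition of the 2-ideal in $(i)\Rightarrow(ii)$ (null $=$ the image object $Z$ has $K(\id{Z})$ an equivalence) differs from the paper's (null $=$ factors through $K(\id{B})\tom B$); the two should agree, but with your definition the construction of the null replacement $\widetilde{b\c n\c a}$ and of the coherence isomorphisms $\nu_{a,b,n}$ demanded by \thex\ref{theorchar2ideal} requires chasing how $(\E,\M)$-factorizations compose, whereas the paper obtains them in one step by applying $K$ to the morphism $\id{}^b$ of $\Esq$. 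You should either adopt the latter definition or supply that verification.
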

					\begin{proof}
						We prove $(i)\aR{}(ii)$. We define a 2-ideal $\N$ of null morphisms and null 2-cells in $\L$ as follows. Null morphisms are precisely those morphisms $n\:A\to B$ that factor through $K(\id{B})$, which we denote as $K(\id{B})\aar{k_{\id{B}}}B$:\v[-2]
						\begin{cd}[4]
							\& K(\id{B})\ar[d,rightarrowtail,"{k_{\id{B}}}"] \\
							A \ar[ru,dashed,"{\exists \overline{n}}"] \ar[r,"n"']\& B
						\end{cd}
						Null 2-cells between null morphisms $n=k_{\id{B}}\c \overline{n}\:A\anull{} B$ and $n'=k_{\id{B}}\c \overline{n'}\:A\anull{} B$ are precisely those 2-cells $\mu$ that factor through $K(\id{B})$:
						\begin{cd}[6][8]
							\& K(\id{B})\ar[d,rightarrowtail,"{k_{\id{B}}}"] \ar[d,dashed, Rightarrow,shift right=8ex,shorten <=1.5ex,shorten >=2ex,"\exists \overline{\mu}"{pos=0.4}]\ar[d, Rightarrow,shift right=7ex,shorten <=6.65ex,shorten >=-3.2ex,"{\mu}"{pos=1.31}]\\
							A \ar[ru,bend left=23,"{ \overline{n}}"{pos=0.7}] \ar[ru,bend right=15,"{ \overline{n'}}"'{inner sep=0.25ex, pos=0.8}]\ar[r,bend left=20,"n"{pos=0.67}]\ar[r,bend right=20,"n'"'{pos=0.67}]\& B
						\end{cd}
						Notice that $\overline{\mu}$ is unique because $K(\id{B})\in \M$ is faithful, as $(\E,\M)$ is $(1,1)$-proper. The idea behind this construction is that we call null those morphisms that make the largest subobject of $A$, which is $\id{A}$, factor through the ``smallest subobject" of $B$, which is given by the image through $K$ of the smallest quotient of $B$ which is $\id{B}$. We will show below that the 2-ideal constructed dually using $C(\id{A})$ is equivalent to the one above, so that 2-kernels and 2-cokernels are the same for the two 2-ideals.
						
						We show that $\N$ is a 2-ideal of null morphisms and null 2-cells in $\L$. Given $A'\aar{a} A$, $B\aar{b}B'$ and a null morphism $A\anull{n} B$, we construct the null replacement of $b\c n\c a$ with the following pasting:
						\begin{cd}[4]
							\& \& K(\id{B})\ar[r,"{\widehat{K(\id{}^b)}}"]\ar[d,rightarrowtail,"{k_{\id{B}}}",""{name=A}] \& K(\id{B'})\ar[d,rightarrowtail,"{k_{\id{B'}}}",""'{name=B}]\\
							A'\ar[r,"a"'] \& A \ar[ru,dashed,"{\exists \overline{n}}"] \ar[r,"n"']\& B \ar[r,"b"'] \& B'
							\ar[from=A,to=B,iso,"{K(\id{}^b)}"{inner sep=1ex},shift right=1ex]
						\end{cd}
						where $(\widehat{K(\id{}^b)},b,{K(\id{}^b)})$ is given by applying $K$ to the morphism $\id{}^b:=(b,b,\id{})$ in $\Esq$. It is straightforward to prove that the axioms of 2-ideal listed in \thex\ref{theorchar2ideal} are satisfied. We thus have a 2-ideal $\N=(N,\nu)$ in $\L$.
						
						We show that the 2-ideal $\N'$ constructed dually to $\N$ using $C(\id{A})$ rather than $K(\id{B})$ is equivalent to the one above (in the sense of \defx\ref{defeq2ideals}). Let $s'\:A\to B$ be a null morphism in $\N'$, i.e.\ a morphism $s'$ such that
						\begin{cd}[5][5]
							A \ar[r,"{s'}"] \ar[d,two heads,"{c_{\id{A}}}"'] \& B \ar[d,equal] \\
							C(\id{A}) \ar[r,"{\exists \widetilde{s'}}"'] \& B.
						\end{cd}
						We can apply $K$ to the identity 2-cell $\id{}^{s'}$ above, seen as a morphism in $\Esq$, and paste with the unit $\eta$ of the biadjunction to obtain the isomorphic 2-cell
						\begin{cd}[1.5][5]
							A \ar[dd,equal]\ar[r,"{\widehat{\eta_{\id{A}}}}"] \& K(c_{\id{A}}) \ar[dd,"{k_{c_{\id{A}}}}"] \ar[r,"{\widehat{K(\id{}^{s'})}}"] \& K(\id{B}) \ar[dd,"{k_{\id{B}}}"] \\
							\hphantom{.}\ar[r,iso,"{\eta_{\id{A}}}"'{inner sep=1ex}] \& \hphantom{.} \ar[r,iso,"{K(\id{}^{s'})}"'{inner sep=1ex}]\& \hphantom{.} \\
							A \ar[r,equal] \& A \ar[r,"{s'}"']\& B.
						\end{cd}
						Such 2-cell is an isomorphic 2-cell $\Xi_{A,B,s'}$ from a null morphism in $\N$ to $s'$, so that condition (A) of \defx\ref{listofproperties} is satisfied. Dually, given a null morphism $n$ in $\N$, we can use $C$ and the counit $\epsilon$ of the biadjunction to construct an isomorphic 2-cell $\Xi'_{A,B,n}$ from a null morphism in $\N'$ to $n$, so that also condition (B) is satisfied. In order to show that property (1) of \defx\ref{listofproperties} holds, we can view a null 2-cell $\tau\:l'\aR{}s'$ in $\N'$, i.e.\ a 2-cell that factors through $c_{\id{A}}$, as a 2-cell in $\Esq$ from $\id{}^{l'}$ to $\id{}^{s'}$ (where $\id{}^{l'}$ is analogous to $\id{s'}$, drawn above, for $l'$ in place of $s'$). Applying $K$ to such 2-cell in $\Esq$, we get a 2-cell in $\Msq$ with second component $\tau$, which shows that property (1) holds. Dually, we get that also (1') holds. It is then straightforward to prove properties (3) and (4), whence also (4') will hold as it is dual to (4), using similar arguments to the one above. The strategy is always to find a good 2-cell in $\Esq$ and apply $K$ to it. Thanks to \thex\ref{theorchareq2ideals}, we conclude that the 2-ideals $\N$ and $\N'$ are equivalent. Whence 2-kernels and 2-cokernels are the same for the two 2-ideals, by \thex\ref{teorequividealshavesameker}. This fact will be useful later in this proof.
						
						Consider now $f\:A\to B$ in $\L$. Since $(\E,\M)$ is a factorization system on the 2-category $\L$, $f$ factorizes up to an isomorphic 2-cell $\theta_f$ as $m\c e$ with $e\in \E$ and $m\in \M$. We prove that the morphism $K(e)\in \M$, denoted as $K(e)\aar{k_e} A$, is an $\N$-2-kernel of $f$. Notice that $\theta_f$ gives the following morphism in $\E$:
						\begin{cd}[5.5][5.5]
							A\ar[r,"f"]\ar[d,"e"'{inner sep=0.25ex},twoheadrightarrow,""{name=A}]\& B\ar[d,equal,""'{name=B}]\\
							Q\ar[r,"m"'{inner sep=0.25ex}] \& B
							\ar[from=A,to=B,iso,shift right=0.8ex,"{\theta_f}"{inner sep=0.75ex}]
						\end{cd}
						Then applying $K$ to it we obtain the structure isomorphic 2-cell for the would be $\N$-2-kernel $k_e$:
						\begin{cd}[0][5]
							\& A \ar[rd,"f"]  \\
							K(e) \ar[rr,shift right=1.4ex,iso,"{K(\theta_f)^{-1}}"{inner sep=1ex,pos=0.6}] \ar[rd,"{\widehat{K(\theta_f)}}"'{inner sep=0.1ex,pos=0.6}]\ar[ru,"{k_e}"] \& \& B \\
							\& K(\id{B}) \ar[ru,"{k_{\id{B}}}"'{pos=0.4}] 
						\end{cd}
						We first show the 1-dimensional part of the universal property of the $\N$-2-kernel. So consider $z\:Z\to A$ and an isomorphic 2-cell
						\begin{cd}[5][5]
							Z \ar[r,"h"] \ar[d,"z"',""{name=A}]\& K(\id{B}) \ar[d,"{k_{\id{B}}}",""'{name=B}] \\
							A \ar[r,"f"']\& B
							\ar[iso,from=A,to=B,shift right=0.8ex,"\beta"{inner sep=1ex}]
						\end{cd}
						We need to construct a morphism $u\:Z\to K(e)$ and an isomorphic 2-cell $\gamma\:z\to k_e\c u$, which we can view as a morphism
						\begin{cd}[5][5]
							Z \ar[r,"u"] \ar[d,equal,""{name=A}]\& K(e) \ar[d,"{k_{e}}",rightarrowtail,""'{name=B}] \\
							Z \ar[r,"z"']\& A
							\ar[iso,from=A,to=B,shift right=0.8ex,"{\gamma^{-1}}"{inner sep=1ex}]
						\end{cd}
						in $\Msq$. The biequivalence over $\L$ gives an equivalence of categories
						$$\HomC{\Msq}{\id{z}}{k_e}\simeq \HomC{\Esq}{c_{\id{z}}}{e}$$
						over $\HomC{\L}{Z}{A}$. So it suffices to construct a morphism $c_{\id{z}}\to e$ in $\Esq$. By the diagonal lifting property of the factorization system $(\E,\M)$ on $\L$, we obtain a fill-in for the isomorphic 2-cell on the left below:
						\[\begin{tikzcd}
							Z & A & Q \\[-2ex]
							& B & {} \\
							{C(\id{z})} & {C(k_{\id{B}})} & B
							\arrow["z", from=1-1, to=1-2]
							\arrow[""{name=0, anchor=center, inner sep=0}, "{c_{\id{z}}}"', two heads, from=1-1, to=3-1]
							\arrow["e", two heads, from=1-2, to=1-3]
							\arrow["f", from=1-2, to=2-2]
							\arrow["m", tail, from=1-3, to=3-3]
							\arrow["{\theta_f^{-1}}"{inner sep=1ex},iso, from=2-2, to=2-3]
							\arrow[""{name=1, anchor=center, inner sep=0}, "{c_{k_{\id{B}}}}"', from=2-2, to=3-2]
							\arrow[""{name=2, anchor=center, inner sep=0}, from=2-2, to=3-3,equal]
							\arrow["{\widehat{C(\beta)}}"', from=3-1, to=3-2]
							\arrow["{\widehat{\epsilon_{\id{B}}}}"', from=3-2, to=3-3]
							\arrow["{C(\beta)}"'{inner sep=1ex}, shorten <=7pt, iso, from=0, to=2-2]
							\arrow["{\epsilon_{\id{B}}}"'{inner sep=1ex}, shorten <=4pt, shorten >=4pt, iso, from=1, to=2]
						\end{tikzcd}\quad \h[2] = \quad
						\begin{tikzcd}
							Z & A & Q \\
							&& {} \\
							{C(\id{z})} & {C(k_{\id{B}})} & B
							\arrow["z", from=1-1, to=1-2]
							\arrow["{\exists \delta}"{inner sep=1ex}, iso, shift right=5ex, from=1-1, to=1-2]
							\arrow["{c_{\id{z}}}"', two heads, from=1-1, to=3-1]
							\arrow["e", two heads, from=1-2, to=1-3]
							\arrow["m", tail, from=1-3, to=3-3]
							\arrow["{\exists v}"{description}, from=3-1, to=1-3]
							\arrow["{\widehat{C(\beta)}}"', from=3-1, to=3-2]
							\arrow["{\widehat{\epsilon_{\id{B}}}}"', from=3-2, to=3-3]
							\arrow["{\exists\delta'}"'{inner sep=1ex},iso, shift left=5ex, from=3-2, to=3-3]
						\end{tikzcd}
						\]
						$\delta$ is a morphism $c_{\id{Z}}\to e$ in $\Esq$, which then corresponds to the desired morphism\linebreak $(u,z,\gamma^{-1})\:\id{z}\to k_e$ in $\Msq$. Precisely, $(u,z,\gamma^{-1})$ is given by $K(\delta)\c \eta_{\id{Z}}$. It is then straightforward to prove that this works, making the associated pasting into an invertible null 2-cell.
						
						For the 2-dimensional universal property of the $\N$-2-kernel, consider $u,v\:Z\to K(e)$ and $\lambda\:k_e\c u\aR{}k_e\c v$ such that the associated 2-cell $\Xi^{\lambda}$ is a null 2-cell. We need to find a 2-cell $\mu\:u\aR{} v$ such that $k_e\ast \mu=\lambda$. Equivalently, we need $\mu$ such that $(\mu,\lambda)$ is a 2-cell in $\Msq$
						$$\text{from }\quad \begin{cd}*[5][5]
							Z \ar[r,"u"]\ar[d,equal,""{name=A}] \& K(e) \ar[d,"k_e",rightarrowtail,""'{name=B}]\\
							Z \ar[r,"{k_e\c u}"'] \& A
							\ar[from=A,to=B,draw=none,"{\id{}^u}"{description}]
						\end{cd} \quad \text{ to } \quad \begin{cd}*[5][5]
							Z \ar[r,"v"]\ar[d,equal,""{name=A}] \& K(e) \ar[d,"k_e",rightarrowtail,""'{name=B}]\\
							Z \ar[r,"{k_e\c v}"'] \& A
							\ar[from=A,to=B,draw=none,"{\id{}^v}"{description}]
						\end{cd}$$
						The equivalence of categories
						$$\HomC{\Msq}{\id{z}}{k_e}\simeq \HomC{\Esq}{c_{\id{z}}}{e}$$
						over $\HomC{\L}{Z}{A}$ gives a bijection on homsets
						$$\HomC{\HomC{\Msq}{\id{z}}{k_e}}{\id{}^u}{\id{}^v}\aiso{} \HomC{\HomC{\Esq}{c_{\id{z}}}{e}}{\eps_e\c C(\id{}^u)}{\eps_e\c C(\id{}^v)}$$
						over $\HomC{\HomC{\L}{Z}{A}}{k_e\c u}{k_e\c v}$. So it suffices to construct a 2-cell $\sigma\:\widehat{\epsilon_e}\c \widehat{C(\id{}^u)}\aR{}\widehat{\epsilon_e}\c \widehat{C(\id{}^v)}$ such that $(\lambda,\sigma)$ is a 2-cell in $\Msq$ from $\eps_e\c C(\id{}^u)$ to $\eps_e\c C(\id{}^v)$. Since $\Xi^{\lambda}$ is a null 2-cell by assumption, there exists $\overline{\lambda}\:\widehat{K(\theta_f)}\c u \aR{} \widehat{K(\theta_f)}\c v$ such that $\Xi^{\lambda}=k_{\id{B}}\ast \overline{\lambda}$. Thus $(\overline{\lambda},f\ast \lambda)$ is a 2-cell in $\Msq$ from $K(\theta_f)\c \id{}^u$ to $K(\theta_f)\c \id{}^v$. Considering $C(\overline{\lambda},f\ast \lambda)$, one can obtain a 2-cell $(e\ast \lambda,\tau)$ from the filled-in square
						\[\begin{tikzcd}
							Z & A & Q \\
							& {C(k_e)} & {} \\
							{C(\id{z})} && B
							\arrow["{k_e\c u}", from=1-1, to=1-2]
							\arrow[""{name=0, anchor=center, inner sep=0}, "{c_{\id{z}}}"', two heads, from=1-1, to=3-1]
							\arrow["e", two heads, from=1-2, to=1-3]
							\arrow[""{name=1, anchor=center, inner sep=0}, "{c_{k_e}}"', from=1-2, to=2-2]
							\arrow["m", tail, from=1-3, to=3-3]
							\arrow[""{name=2, anchor=center, inner sep=0}, "{\widehat{\epsilon_e}}"', from=2-2, to=1-3]
							\arrow[""{name=3, anchor=center, inner sep=0}, "{\widehat{C(\id{}^u)}}"', from=3-1, to=2-2]
							\arrow["{m\c \widehat{\epsilon_e}\c \widehat{C(\id{}^u)}}"'{inner sep=0.1ex}, from=3-1, to=3-3]
							\arrow["{C(\id{}^u)}"{inner sep=1ex}, shorten <=5pt, iso, from=0, to=2-2]
							\arrow["{\epsilon_e}"{inner sep=1ex,pos=0.55}, shorten <=3pt, shorten >=3pt, iso, from=1, to=2]
							\arrow["{\id{}}"{description, pos=0.6}, shift right=4, draw=none, from=3, to=2-3]
						\end{tikzcd}\]
						to the analogous one with $v$ in place of $u$. By the 2-dimensional part of the diagonal lifting property, we obtain the desired 2-cell $\sigma$ that works. So we obtain $\mu$ such that $k_e\ast \mu=\lambda$. And such $\mu$ is unique because $k_e$ is faithful, as $(\E,\M)$ is $(1,1)$-proper. So $K(e)=(K(e)\aar{k_e} A)$ is an $\N$-2-kernel of $f$.
						
						Dually, $C(m)=(B\aar{c_m} C(m))$ is an $\N$-2-cokernel of $f$. Indeed, it is surely a 2-cokernel of $f$ with respect to the 2-ideal $\N'$ constructed using $C(\id{A})$, and we observed above that the 2-ideal $\N'$ is equivalent to $\N$. So that $C(m)$ is also an $\N$-2-cokernel of $f$. We have thus proved that $\L$ has all $\N$-2-kernels and $\N$-2-cokernels. Notice then that, since $e$ has trivial $(\E,\M)$-factorization $e=\id{}\c e$, we have that $k_e$ is also an $\N$-2-kernel of $e$. And dually $c_m$ is also an $\N$-2-cokernel of $m$. In particular we also have that $c_{k_{e}}$ is an $\N$-2-cokernel of the $\N$-2-kernel of $e$.
						
						Since $K$ and $C$ give a biequivalence over $\L$, there exists an equivalence $\epsilon_e$ in $\Esq$ between $C(K(e))$ and $e$. This gives in particular an equivalence $\widehat{\epsilon}_e\:C(k_e)\simeq Q$ in $\L$ and an isomorphic 2-cell
						\begin{cd}[4.5][4.5]
							A \ar[r,iso,shift right=5ex,"{\epsilon_e}"{inner sep =1ex}]\ar[d,"c_{k_e}"']\ar[r,equal] \& A \ar[d,"{e}"] \\
							C(k_e) \ar[r,simeq,"{\widehat{\epsilon}_e}"'{inner sep=1ex}] \& Q
						\end{cd}
						in $\L$. We then conclude that $e$ is an $\N$-2-cokernel, and precisely the $\N$-2-cokernel of the $\N$-2-kernel of $e$. Moreover, it is straightforward to see that the structure isomorphic 2-cell that exhibits such cokernel is given, up to an invertible null 2-cell, by the structure isomorphic 2-cell of the $\N$-2-kernel of $e$. Notice that the proof of this uses \thex\ref{teorequividealshavesameker}, to build the structure 2-cell of the $\N$-2-cokernel $c_{k_e}$ of $k_e$ from the structure 2-cell given by $c_{k_e}$ being the $\N'$-2-cokernel of $k_e$. Dually, $m$ is the $\N$-2-kernel of the $\N$-2-cokernel of $m$, with structure isomorphic 2-cell similarly good.
						
						We conclude that $f$ factorizes up to isomorphism as an $\N$-2-cokernel followed by an $\N$-2-kernel. We can also show that $\E$ and $\M$ coincide with the classes of all $\N$-2-cokernels and all $\N$-2-kernels respectively. Indeed, we already know that every morphism in $\M$ is an $\N$-2-kernel. Given then an $\N$-2-kernel $t\:T\to A$, say of $f$, then since also $k_e$ is an $\N$-2-kernel of $f$ there exists an equivalence $i\:T\simeq K(e)$ and an isomorphic 2-cell $k_e\c i\iso t$. Whence $t\in \M$ because $k_e\in \M$, by the axioms of factorization system on a 2-category. And dually for $\E$.
						
						Thanks to this, we deduce that every $\N$-2-kernel is an $\N$-2-kernel of its $\N$-2-cokernel, and that every $\N$-2-cokernel is an $\N$-2-cokernel of its $\N$-2-kernel. Indeed, we had already shown that this holds for morphisms in $\M$ and in $\E$ respectively.
						
						It remains to prove that $\N$ is a closed 2-ideal. We show that $\N$-2-kernels $m\:Q\to A$ reflect null morphisms. So consider $s\:D\to Q$ such that $m\c s$ is isomorphic to a null morphism $n\:D\anull{}A$, via an isomorphic 2-cell $\delta$. Then by the universal property of the $\N$-2-cokernel $c_{\id{D}}$ of $\id{D}$, there exist $w\:C(\id{D})\to A$ and an isomorphic $2$-cell as on the left below such that the pasting on the right below is a null 2-cell $\Gamma$:
						\begin{eqD*}
							\begin{cd}*[5.4][5.4]
								D \ar[r,iso,shift right=5ex,"{\gamma}"{inner sep=1ex}]\ar[d,twoheadrightarrow,"{c_{\id{D}}}"']\ar[r,"{s}"]\& Q \ar[d,rightarrowtail,"{m}"]\\
								C(\id{D}) \ar[r,"{w}"']\& A
							\end{cd}\quad\qquad
							\begin{cd}*
								{} \&[-2ex] {} \&[-1ex] {Q}\ar[r,"m",rightarrowtail] \&[-2ex] {Z}\\[-4ex]
								{D} \arrow[rr,"{\hspace{2ex}\beta}"'{pos=0.57, inner sep=0.01ex}, iso, shift right= 1.65ex, start anchor={[xshift=-0.1ex]}] \arrow[rr,"{\hspace{2ex}\delta^{-1}}"{pos=0.53, inner sep=0.9ex}, iso, shift left= 4.2ex, start anchor={[xshift=-0.1ex]}] \arrow[rrru,"", null, bend left=42] \arrow[rr,"", null, bend right=30] \arrow[rrru,"", null, bend right=72] \arrow[r,equal] \& {D} \ar[ru,"s"]\arrow[r,"{\hspace{2ex}\nu}"'{pos=0.99}, iso, shift right= 3.1ex, start anchor={[xshift=3.5ex]}] \arrow[r,"{\hspace{2ex}\gamma}"{pos=0.91}, iso, shift left= 2.1ex, start anchor={[xshift=4.9ex]}] \arrow[r,"{c_{\id{D}}}"{inner sep=0.15ex}] \& {C} \arrow[ru,"{\exists w}"', dashed] \& {}
							\end{cd}
						\end{eqD*}
						By the diagonal lifting property of the factorization system $(\E,\M)$, there exist $d\:C(\id{D})\to Q$ and isomorphic $2$-cells $\sigma\:s\iso d\c c_{\id{D}}$ and $\tau\:m\c d\iso w$ such that
						\begin{eqD*}
							\begin{cd}*[5.4][5.4]
								D \ar[r,iso,shift right=5ex,"{\gamma}"{inner sep=1ex}]\ar[d,twoheadrightarrow,"{c_{\id{D}}}"']\ar[r,"{s}"]\& Q \ar[d,rightarrowtail,"{m}"]\\
								C(\id{D}) \ar[r,"{w}"']\& A
							\end{cd}
							\quad\h[2] = \quad
							\begin{cd}*[5.4][5.4]
								D \ar[r,iso,shift right=3.5ex,"{\sigma}"{inner sep=1ex},start anchor={[xshift=-5ex]}]\ar[d,twoheadrightarrow,"{c_{\id{D}}}"']\ar[r,"{s}"]\& Q \ar[d,rightarrowtail,"{m}"]\\
								C(\id{D})\ar[r,iso,shift left=3.4ex,"{\tau}"{inner sep=1ex},start anchor={[xshift=4.65ex]}]\ar[ru,"d"'{inner sep=0.2ex}] \ar[r,"{w}"']\& A
							\end{cd}
						\end{eqD*}
						Since $c_{\id{D}}$ is isomorphic to a null morphism $y$, say via $\beta$, as it is the $\N$-2-cokernel of an identity, $\sigma$ (pasted with $\beta$ and $\nu$) provides an isomorphic 2-cell $\psi$ between $s$ and a null morphism. It is then straightforward to show that $\psi$ satisfies the required axiom written in \defx\ref{defreflnull}, pasting $\Gamma$ with the invertible null 2-cell given by $\tau\ast y$ and an invertible null 2-cell given by the pseudofunctoriality of $N$. Thus $\N$-2-kernels reflect null morphisms.
						
						We now show that $\N$-2-kernels $m$ reflect null 2-cells. So consider a $2$-cell $\mu\:s\aR{}s'\:D\anull{} Q$ such that $m\ast \mu$ (pasted with appropriate $\nu$'s) is a null 2-cell. We can then produce, as shown earlier, $w,\gamma,\Gamma,d,\sigma,\tau$ starting from $s$ and analogous $w',\gamma',\Gamma',d',\sigma',\tau'$ starting from $s'$. As $s$ and $s'$ are null, we can also choose $\tau$ and $\tau'$ to be identities. Pasting $\Gamma^{-1}$ with $m\ast \mu$ and $\Gamma'$, we obtain a null 2-cell which induces a unique 2-cell $\theta\:w\aR{}w'\:C(\id{D})\to A$ such that $\theta\ast\c_{\id{D}}=\gamma'\c m\ast \mu\c \gamma^{-1}$. But then, by the 2-dimensional diagonal lifting property applied to $\gamma$ and $\gamma'$, the pair of 2-cells $(\mu,\theta)$ induces a 2-cell $\iota\:d\aR{}d'\:C(\id{D})\to Q$ such that $\mu$ coincides with ${\sigma'}^{-1}\c \iota\ast c_{\id{D}}\c \sigma$. It is straightforward to see that the latter is a null 2-cell. We conclude that $\N$-2-kernels reflect null 2-cells. Dually, we obtain that $\N$-2-cokernels coreflect null morphisms and null 2-cells. So $\N$ is a closed 2-ideal.
						
						We prove $(ii)\aR{} (i)$. We define $\E$ and $\M$ to be the classes of all $\N$-2-cokernels and all $\N$-2-kernels respectively. We will denote as $K(e)\aar{k_e}A$ the $\N$-2-kernel of a morphism $e\:A\toe Q$ in $\E$, and as $B\aar{c_m} C(m)$ the $\N$-2-cokernel of a morphism $m\:Q\tom B$ in $\M$. By \prox\ref{propkerarefaith}, every morphism in $\E$ is cofaithful and every morphism in $\M$ is faithful. We prove that $(\E,\M)$ is a factorization system on $\L$, and thus a $(1,1)$-proper one. By assumption, every morphism $f$ in $\L$ factorizes up to isomorphism as a morphism in $\E$ followed by a morphism in $\M$. It is then easy to see that if $m\:Q\to B$ is an $\N$-2-kernel and $i\:R\simeq Q$ is an equivalence, then $m\c i$ is an $\N$-2-kernel, of the same morphism. And also that if $m$ is an $\N$-2-kernel, say of $g$ with structure isomorphic 2-cell $\alpha$, and $\ell$ is isomorphic to $m$, via $\psi$, then $l$ is as well an $\N$-2-kernel of $g$, with structure isomorphic 2-cell $\alpha\c g\ast \psi$. Dual conditions then hold for $\E$ as well.
						
						We prove the orthogonality condition for $(\E,\M)$. So let $e\in \E$ be the $\N$-2-cokernel $D\toe R$ of a morphism $g\:C\to D$ with structure isomorphic 2-cell $\beta$, and $m\in \M$ be the $\N$-2-kernel $Q\tom A$ of a morphism $f\:A\to B$ with structure isomorphic 2-cell $\alpha$. And consider an isomorphic 2-cell
						\begin{cd}[5][5]
							D \ar[r,"s"]\ar[d,"e"',twoheadrightarrow,""{name=A}] \& Q \ar[d,"m",rightarrowtail,""'{name=B}]\\
							R \ar[r,"t"'] \& A
							\ar[from=A,to=B,iso,"\phi"{inner sep=1ex},shift right=0.8ex]
						\end{cd}
						Then $f\c t\c e$ is isomorphic to a null morphism, via
						\begin{cd}[3][6]
							\&[-0.5ex] R \ar[r,"t"]\&[1.5ex] A \ar[r,"f"]\& B \\
							D \ar[rrru,null,bend right=47]\ar[r,"s"']\ar[ru,"e",twoheadrightarrow,""'{name=A}] \& Q \ar[rru,null,bend right=20]\ar[ru,"m"',rightarrowtail,""{name=B}] \& \hphantom{.}
							\ar[from=1-2,to=1-3,iso,"{\phi^{-1}}"{inner sep=0.8,pos=0.65},shift right=3.4ex,start anchor={[xshift=-4.65em]}]
							\ar[from=1-3,to=1-4,iso,"{\alpha}"{inner sep=0.8,pos=0.61},shift right=3.4ex,start anchor={[xshift=-4.65em]}]
							\ar[from=2-2,to=2-3,iso,"{\nu}"{inner sep=0.8,pos=0.63},shift right=3.2ex,start anchor={[xshift=-2em]}]
						\end{cd}
						But $e$ coreflects null morphisms, so there exists an isomorphic 2-cell $\psi\:f\c t\iso \widehat{\psi}$ with $\widehat{\psi}$ a null morphism such that $\nu\c \psi\ast e$ coincides with the pasting above, up to an invertible null 2-cell. $\psi$ then induces $d\:R\to Q$ and $\delta$ as below, by the universal property of the $\N$-2-kernel $m$, making the following pasting into an invertible null 2-cell:
						\begin{cd}[6.7][6.7]
							{R} \arrow[rrrd,""{name=C}, null, bend right=69] \arrow[rd,"{\exists d}"'{name=A}, dashed] \arrow[rrd,"t"{name=B}, bend left=20] \arrow[rrrd,"", null, bend left] \&[-2ex] {} \&[-2ex] {} \&[-2ex] {}\\[-4ex]
							{} \& {Q} \arrow[r,"{\exists \delta}"'{pos=0.65}, iso, shift left= 3ex, start anchor={[xshift=-8ex]}] 
							\arrow[r,"{\nu}"'{pos=0.65}, iso, shift right= 2.8ex, start anchor={[xshift=-8ex]}] 
							\arrow[r,"m"] \arrow[rr,""{name=D}, bend right=42, null] \& {A} \arrow[r,"{\psi^{-1}}"'{pos=0.65}, iso, shift left= 3.8ex, start anchor={[xshift=-9ex]}] \arrow[r,"{\alpha}"'{pos=0.65}, iso, shift right= 2ex, start anchor={[xshift=-8ex]}]  \arrow[r,"f"] \& {B}
						\end{cd}
						It is then straightforward to see that
						\begin{cd}
							D \ar[r,iso,shift right=3.75ex,"\phi"{pos=0.4,inner sep=1ex},start anchor={[xshift=-0.7ex]}]\ar[d,"e"']\ar[r,"s"] \& Q \ar[r,"m"] \& A \\
							R \ar[rr,iso,shift left=3.15ex,"\delta"{pos=0.5,inner sep=1ex},start anchor={[xshift=0ex]}]\ar[r,"d"']\ar[rru,"t"{inner sep=0.25ex},bend left=10] \& Q \ar[ru,"m"'] \& \hphantom{.}
						\end{cd}
						induces an isomorphic 2-cell $\sigma\:s\aR{}d\c e$ such that
						\begin{eqD*}
							\begin{cd}*[5][5]
								D \ar[r,"s"]\ar[d,"e"',twoheadrightarrow,""{name=A}] \& Q \ar[d,"m",rightarrowtail,""'{name=B}]\\
								R \ar[r,"t"'] \& A
								\ar[from=A,to=B,iso,"\phi"{inner sep=1ex},shift right=0.8ex]
							\end{cd}
							\quad\h[2] = \quad
							\begin{cd}*[5.4][5.7]
								D \ar[r,iso,shift right=3.5ex,"{\sigma}"{inner sep=1ex},start anchor={[xshift=-5ex]}]\ar[d,twoheadrightarrow,"{e}"']\ar[r,"{s}"]\& Q \ar[d,rightarrowtail,"{m}"]\\
								R\ar[r,iso,shift left=3ex,"{\delta^{-1}}"{inner sep=1ex},start anchor={[xshift=4.65ex]}]\ar[ru,"d"'{inner sep=0.1ex,pos=0.4}] \ar[r,"{t}"']\& A
							\end{cd}
						\end{eqD*}
						This concludes the 1-dimensional part of the orthogonality condition. For the 2-dimensional part, consider squares $(s,t,\phi)$ and $(s',t',\phi')$, filled by $(d,\sigma,\rho)$ and $(d',\sigma',\rho')$ respectively. Let then $(\lambda,\mu)$ be a 2-cell between such squares. We apply the 2-dimensional universal property of the $\N$-2-kernel $m$ to the 2-cell $\rho'^{-1}\c\mu\c \rho$. It is straightforward to see that this 2-cell gives indeed a null 2-cell, thanks to the fact that $e$ coreflects null 2-cells. It is then induced a 2-cell $\iota\:d\aR{}d'$ such that
						\begin{eqD*}
							\begin{cd}*[6][9]
								\& Q \ar[d,"m",rightarrowtail] \\
								R \ar[r,"t'"']\ar[ru,bend left=17,"d"{inner sep=0.1ex},""{name=A}] \ar[ru,bend right=17,"d'"'{inner sep=0.1ex},""'{name=B}] 
								\ar[r,iso,shift left=2.7ex,"\rho'"'{pos=0.6},start anchor={[xshift=7ex]}]
								\& A
								\ar[from=A,to=B,Rightarrow,"\iota",shorten <=0.5ex,shorten >=0.5ex]
							\end{cd}\quad \h[2] = \quad \h[-3]
							\begin{cd}*[6][9]
								\& Q \ar[d,"m",rightarrowtail] \\
								R \ar[r,"t",bend left=16,""{name=A}]\ar[r,"t'"',bend right=20,""'{name=B}]\ar[ru,bend left=17,"d"{inner sep=0.1ex},""]
								\ar[r,iso,shift left=3.7ex,"\rho"{pos=0.6,inner sep=1ex},start anchor={[xshift=4.5ex]}]
								\& A
								\ar[from=A,to=B,Rightarrow,"\mu",shorten <=0.5ex,shorten >=0.5ex]
							\end{cd}
						\end{eqD*}
						And the other axiom required to $\iota$ holds because $m$ is faithful. We conclude that $(\E,\M)$ is a factorization system on the 2-category $\L$.
						
						We define $K\:\Esq\to \Msq$ as follows. Given $e\:A\toe Q$, we send it to its $\N$-2-kernel $K(E)\aar{k_e} A$. Given a morphism
						\begin{cd}[5][5]
							A \ar[d,twoheadrightarrow,"e"'{name=A}]\ar[r,"a"] \& A' \ar[d,twoheadrightarrow,"e'"{name=B}] \\
							Q \ar[r,"q"'] \& Q'
							\ar[from=A,to=B,iso,shift right=0.5ex,"\lambda"{inner sep=1ex}]
						\end{cd}
						in $\Esq$, we send it to the square given by $(\widehat{K(\lambda)},a,K(\lambda))$ induced as below by the $\N$-2-kernel $k_{e'}$, making the following pasting into an invertible null 2-cell:
						\begin{cd}
							{K(e)} \ar[rr,iso,shift left=2ex,"{\alpha^{-1}}"{inner sep=1ex}]\arrow[rrrd,""{name=C}, null, bend right=60]\arrow[rrrd, null, bend left=69] \arrow[rd,"{\exists \widehat{K(\lambda)}}"'{name=A,inner sep=0.05ex}, dashed] \arrow[r,rightarrowtail,"k_e"{name=B}] \arrow[rr,"", null, bend left=40] \&[-2ex] {A}\ar[rr,iso,bend left=6ex,"\nu"'{pos=0.65}]\ar[rd,"a"{inner sep=0.15ex}] \ar[r,"e"]\&[-2ex] {Q} \ar[rd,"q"]\&[-2ex] {\hphantom{.}}\\[-4ex]
							{} \& {K(e')} \arrow[r,"{\exists K(\lambda)}"'{pos=0.65}, iso, shift left= 3.75ex, start anchor={[xshift=-14ex]}] 
							\arrow[r,"{\nu}"'{pos=0.59}, iso, shift right= 2.8ex, start anchor={[xshift=-13ex]}] 
							\arrow[r,rightarrowtail,"{k_{e'}}"] \arrow[rr,""{name=D}, bend right=40, null] \& {A'} \arrow[r,"{\lambda^{-1}}"'{pos=0.65}, iso, shift left= 3.3ex, start anchor={[xshift=-8ex]}] \arrow[r,"{\alpha'}"'{pos=0.6}, iso, shift right= 2ex, start anchor={[xshift=-8ex]}]  \arrow[r,"e'"] \& {Q'}
						\end{cd}
						Given a 2-cell $(\xi,\zeta)\:(a,q,\lambda)\aR{}(a',q',\lambda')\:e\to e'$ in $\Esq$, we send it to the 2-cell $([K(\xi,\zeta)],\xi)$, where $[K(\xi,\zeta)]\:\widehat{K(\lambda)}\aR{}\widehat{K(\lambda')}$ is the unique 2-cell induced by the 2-dimensional universal property of the $\N$-2-kernel $k_{e'}$ starting from the 2-cell $K(\lambda')\c \xi\ast k_e\c K(\lambda)^{-1}$. It is straightforward to show that $K$ is a normal pseudofunctor over $\L$. Dually, we have that $C\:\Msq \to \Esq$ defined by taking $\N$-2-cokernels is a normal pseudofunctor over $\L$.
						
						We prove that $K$ and $C$ give a biequivalence over $\L$. It suffices to construct pseudonatural transformations $\eta\:\Id{}\aR{}K\c C$ and $\epsilon\:C\c K\aR{}\Id{}$ over $\L$ such that the components of $\eta$ and $\eps$ are equivalences. Consider $m\:Q\tom B$. By assumption, $m$ is an $\N$-2-kernel of its $\N$-2-cokernel, with structure isomorphic 2-cell given, up to invertible null 2-cell, by the structure isomorphic 2-cell of the $\N$-2-cokernel of $m$. We define the component $\eta_m$ of $\eta$ on $m$ as the square (actually, triangle) $(\widehat{\eta_m},\id{},\eta_m)$ induced by the universal property of the $\N$-2-kernel $k_{c_m}$, making the following pasting into an invertible null 2-cell:
						\begin{cd}
							{Q} \arrow[rrrd,""{name=C}, null, bend right=69] \arrow[rd,"{\exists \widehat{\eta_m}}"'{name=A,inner sep=-0.05ex}, dashed] \arrow[rrd,rightarrowtail,"m"{name=B, inner sep=0.25ex}, bend left=20] \arrow[rrrd,"", null, bend left=27] \&[-2ex] {} \&[0ex] {} \&[-2ex] {}\\[-3ex]
							{} \& {K} \arrow[r,"{\exists \eta_m^{-1}}"{pos=0.36,inner sep=0.5ex}, iso, shift left= 3ex, start anchor={[xshift=-8ex]}] 
							\arrow[r,"{\nu}"'{pos=0.63}, iso, shift right= 2.8ex, start anchor={[xshift=-8ex]}] 
							\arrow[r,"k_{c_m}",rightarrowtail] \arrow[rr,""{name=D}, bend right=40, null] \& {B} \arrow[r,"{\beta^{-1}}"'{pos=0.65}, iso, shift left= 3.7ex, start anchor={[xshift=-8ex]}] \arrow[r,"{\alpha}"'{pos=0.65}, iso, shift right= 2.2ex, start anchor={[xshift=-8ex]}]  \arrow[r,"{c_m}"] \& {C(m)}
						\end{cd}
						where $\beta$ is given by the cokernel $c_m$ of $m$ and $\alpha$ is given by the kernel $k_{c_m}$ of $c_m$. Since both $m$ and $k_{c_m}$ are $\N$-2-kernels of $c_m$, we deduce that $\widehat{\eta_{m}}$ is an equivalence. It is straightforward to see that this implies that the morphism $\eta_m$ in $\Msq$ is an equivalence in $\Msq$. Given a morphism
						\begin{cd}[5][5]
							Q \ar[d,rightarrowtail,"m"'{name=A}]\ar[r,"q"] \& Q' \ar[d,rightarrowtail,"m'"{name=B}] \\
							B \ar[r,"b"'] \& B'
							\ar[from=A,to=B,iso,shift right=0.5ex,"\lambda"{inner sep=1ex}]
						\end{cd}
						in $\Msq$, we define the structure 2-cell $\eta_\lambda$ of $\eta$ on it as the isomorphic 2-cell $([\eta_{\lambda}],\id{})$ in $\Msq$ where $[\eta_{\lambda}]\:\widehat{K(C(\lambda))}\c \widehat{\eta_m}\aR{}\widehat{\eta_{m'}}\c q$ is the unique isomorphic 2-cell such that $k_{c_{m'}}\ast [\eta_{\lambda}]$ is equal to
						\[\begin{tikzcd}[row sep=3ex,column sep=4ex]
							& {K(c_m)} && {K(c_{m'})} \\
							Q && B && {B'} \\
							& {Q'} && {K(c_{m'})} & \hphantom{.}
							\arrow["{\widehat{K(C(\lambda))}}", from=1-2, to=1-4]
							\arrow[""{name=0, anchor=center, inner sep=0}, "{k_{c_m}}"{description}, from=1-2, to=2-3]
							\arrow[""{name=1, anchor=center, inner sep=0}, "{k_{c_{m'}}}", from=1-4, to=2-5]
							\arrow[""{name=2, anchor=center, inner sep=0}, "{\widehat{\eta_m}}", from=2-1, to=1-2]
							\arrow["m"', from=2-1, to=2-3]
							\arrow[""{name=3, anchor=center, inner sep=0}, "q"', from=2-1, to=3-2]
							\arrow["b"{description}, from=2-3, to=2-5]
							\arrow[""{name=4, anchor=center, inner sep=0}, "{m'}"{description}, from=3-2, to=2-5]
							\arrow["{\widehat{\eta_{m'}}}"', from=3-2, to=3-4]
							\arrow["{k_{c_{m'}}}"', from=3-4, to=2-5,bend right=20, shorten <=-0.5ex]
							\arrow["{K(C(\lambda))}"{inner sep=1ex},shift right=1ex,iso, from=0, to=1]
							\arrow["{\eta_m}"{inner sep=1ex}, shift right=1ex,iso, from=2, to=0]
							\arrow["{\lambda^{-1}}"{inner sep=1ex}, iso, from=3, to=4]
							\ar[from=3-2,to=3-5,"{\eta_{m'}^{-1}}"{inner sep=1ex,pos=0.6},iso, shift left=2ex,start anchor={[xshift=6em]}]
						\end{tikzcd}\]
						It is straightforward to see that such an isomorphic 2-cell $[\eta_{\lambda}]$ can be induced by the 2-dimensional universal property of the $\N$-2-kernel $k_{c_{m'}}$, by construction of $\eta_m$, $\eta_{m'}$, $C(\lambda)$ and $K(C(\lambda))$. And it is straightforward to prove that $\eta$ is a pseudonatural transformation over $\L$, thanks to the uniqueness of $[\eta_{\lambda}]$ and the fact that $\N$-2-kernels are faithful. Dually, we have a pseudonatural equivalence $\epsilon\:C\c K\aR{}\Id{}$ over $\L$. And we conclude that $K$ and $C$ give a biequivalence over $\L$.
					\end{proof}
					
					\begin{corollary}\label{corollpuppe}
						Let $\L$ be a 2-pointed 2-category, with a fixed choice $0$ of a representative for a bizero object. The following conditions are equivalent:
						\begin{enumT}
							\item $\L$ satisfies condition $(i)$ of \thex\ref{theorchargrandistwocat} with $K$ and $C$ sending identities to morphisms from $0$ and into $0$ respectively;
							\item $\L$ satisfies condition $(ii)$ of \thex\ref{theorchargrandistwocat} with $\N$ given by the 2-ideal $(0)$ canonically associated to $\L$.
						\end{enumT}
					\end{corollary}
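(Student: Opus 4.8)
The plan is to obtain the corollary by inspecting the proof of \thex\ref{theorchargrandistwocat}: in both directions the extra hypothesis on $K$ and $C$ will be seen to be precisely the statement that the $2$-ideal in play is the canonical $2$-ideal $(0)$ of \exax\ref{zero2ideal}. So the work amounts to matching ``$K(\id{})$ is a morphism from $0$ and $C(\id{})$ a morphism into $0$'' with ``$\N=(0)$''.

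For $(i)\aR{}(ii)$, I would start from a $(1,1)$-proper factorization system $(\E,\M)$ together with the biequivalence over $\L$ given by normal pseudofunctors $K,C$ that send identities to morphisms from and into $0$. The proof of $(i)\aR{}(ii)$ in \thex\ref{theorchargrandistwocat} produces a $2$-ideal $\N$ whose null morphisms are those factoring through $K(\id{B})\aar{k_{\id{B}}}B$ and whose null $2$-cells are those factoring through $K(\id{B})$ as well. By hypothesis $k_{\id{B}}$ has domain $0$; since $\HomC{\L}{A}{0}$ is equivalent to the terminal category, a morphism $A\to B$ factors through $k_{\id{B}}$ exactly when it factors through $0$, and --- using that $K(\id{B})\in\M$ is faithful together with the uniqueness of isomorphic $2$-cells into $0$ --- a $2$-cell between two such morphisms factors through $K(\id{B})$ exactly when it is the one induced by the $2$-dimensional universal property of $0$ as in \exax\ref{zero2ideal}. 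Thus $\N$ and $(0)$ have the same classes of null morphisms and null $2$-cells. Since every condition appearing in part (ii) of \thex\ref{theorchargrandistwocat} --- existence of $\N$-2-kernels and $\N$-2-cokernels, closedness, the kernel/cokernel self-matching, and the $\N$-2-cokernel-then-$\N$-2-kernel factorization --- refers only to these classes (the structure isomorphisms of a $2$-ideal being themselves invertible null $2$-cells, hence absorbable), the conditions transfer from $\N$ to $(0)$; alternatively one notes that $\N$ is equivalent to $(0)$ in the sense of \defx\ref{defeq2ideals} and invokes \thex\ref{teorequividealshavesameker} together with the easy invariance of closedness under equivalence of $2$-ideals. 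Hence $\L$ satisfies condition (ii) of \thex\ref{theorchargrandistwocat} with $\N=(0)$.

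For $(ii)\aR{}(i)$, I would take the $2$-ideal $(0)$ satisfying the four bullets of condition (ii) and run the corresponding construction from the proof of \thex\ref{theorchargrandistwocat}: this yields $\E$ as the class of all $(0)$-2-cokernels, $\M$ as the class of all $(0)$-2-kernels, and $K\:\Esq\to\Msq$ sending $e\:A\toe Q$ to a chosen $(0)$-2-kernel $K(e)\aar{k_e}A$, dually for $C$. It then remains to observe that the choices can be made compatibly with the decoration. Using that in the $2$-pointed case $(0)$-2-kernels are biisoinserters of the morphism with a null morphism (and dually for $(0)$-2-cokernels), the $(0)$-2-kernel of $\id{B}$ is the biisoinserter of $\id{B}$ with a chosen $B\to 0\to B$; since $\HomC{\L}{0}{B}$ and $\HomC{\L}{Z}{0}$ are contractible, $0\aar{i}B$ (with the evident structure isomorphic $2$-cell) satisfies the universal property of \defx\ref{deftwokernel}, so it may be taken as $K(\id{B})$ --- a morphism from $0$; dually $C(\id{A})$ may be taken to be a morphism $A\to 0$. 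Hence $\L$ satisfies condition (i) of \thex\ref{theorchargrandistwocat} with the required decoration on $K$ and $C$.

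The routine parts are the coherence bookkeeping behind the phrase ``the conditions transfer'' and the verification that $0\aar{i}B$ has the $(0)$-2-kernel universal property; the one place deserving genuine care --- and the likely main obstacle --- is showing that the class of null $2$-cells of $\N$ really coincides with that of $(0)$, which is where the faithfulness of $K(\id{B})\in\M$ (equivalently, the $(1,1)$-properness of $(\E,\M)$) and the contractibility of the hom-categories into and out of $0$ are both needed.
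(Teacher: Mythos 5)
Your proposal follows essentially the same route as the paper: in $(i)\Rightarrow(ii)$ one identifies the $2$-ideal $\N$ built in the proof of \thex\ref{theorchargrandistwocat} with the canonical $2$-ideal $(0)$ and transfers the four bullets of condition $(ii)$; in $(ii)\Rightarrow(i)$ one observes that $0$ itself furnishes $(0)$-$2$-kernels and $(0)$-$2$-cokernels of identities, so the construction of $K$ and $C$ can be made with the required decoration. One correction, though: your primary claim that $\N$ and $(0)$ have \emph{the same} classes of null morphisms and null $2$-cells is too strong. A null morphism of $(0)$ is any composite $A\to 0\aar{i}B$, and since $\HomC{\L}{0}{B}$ is only \emph{equivalent} (not isomorphic) to the terminal category, such a composite need not literally factor through the particular morphism $k_{\id{B}}\:0\to B$; one only gets $\N$ \emph{contained in} $(0)$, with the two $2$-ideals \emph{equivalent} in the sense of \defx\ref{defeq2ideals} via the unique isomorphic $2$-cells supplied by the bizero object. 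So the route you offer as an ``alternative'' --- equivalence of $2$-ideals plus \thex\ref{teorequividealshavesameker} --- is in fact the one that is needed, and it is exactly the argument the paper gives.
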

					\begin{proof}
						We prove $(i)\aR{} (ii)$. The 2-ideal $\N$ constructed in the proof of \thex\ref{theorchargrandistwocat} is contained in the 2-ideal $(0)$. It is straightforward to see that $\N$ is equivalent to $(0)$, in the sense of \defx\ref{defeq2ideals}, thanks to the universal property of the bizero object. Whence 2-kernels and 2-cokernels for the two 2-ideals are the same, by \thex\ref{teorequividealshavesameker}. Finally, it is straightforward to show that $\L$ satisfies condition $(ii)$ of \thex\ref{theorchargrandistwocat} with respect to the 2-ideal $(0)$, knowing that it satisfies it with respect to the 2-ideal $\N$.
						
						We prove $(ii)\aR{}(i)$. It is easy to see that $0$ gives $(0)$-2-kernels and $(0)$-2-cokernels of identities. We can thus choose in the construction of $K$ and $C$ of \thex\ref{theorchargrandistwocat} to send identities to morphisms from $0$ and into $0$ respectively.
					\end{proof}
					
					Since the 1-dimensional analogue of \thex\ref{theorchargrandistwocat} characterizes Grandis exact categories, we propose the following definition. Recall that the pointed version of Grandis exact categories is known as Puppe exact category.
					
					\begin{definition}\label{def2exactness}
						We call a 2-category $\L$ \dfn{Grandis 2-exact} if it satisfies one of the two equivalent conditions of \thex\ref{theorchargrandistwocat}. A 2-pointed 2-category $\L$ is \dfn{Puppe 2-exact} if it satisfies one of the two equivalent conditions of \corx\ref{corollpuppe}.
					\end{definition}
					
					
					In the next section we will compare our notion of Puppe exactness in dimension 2 with the existing literature.
					
					\begin{corollary}\label{corollckkc}
						In a Grandis 2-exact (or Puppe 2-exact) 2-category $\L$, every morphism $f$ factorizes up to isomorphism as the $\N$-2-cokernel of its $\N$-2-kernel followed by the $\N$-2-kernel of its $\N$-2-cokernel:
						\begin{cd}[0]
							A \ar[rr,iso,shift right=0.75 ex]\ar[rr,bend left=10,"f"]\ar[rd,twoheadrightarrow,"{c_{k_f}}"'{pos=0.7}]\&\& B \\
							\& Q \ar[ru,rightarrowtail,"{k_{c_f}}"'{pos=0.3}]
						\end{cd}
					\end{corollary}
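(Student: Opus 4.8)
The plan is to deduce the claim from the proof of \thex\ref{theorchargrandistwocat}. Recall from that proof that in a Grandis 2-exact (equivalently, in the pointed case, a Puppe 2-exact) 2-category the associated $(1,1)$-proper factorization system $(\E,\M)$ has $\E$ equal to the class of all $\N$-2-cokernels and $\M$ equal to the class of all $\N$-2-kernels, and that every morphism $f\:A\to B$ factorizes up to an isomorphic 2-cell as $f\simeq m\c e$ with $e\:A\toe Q$ in $\E$ and $m\:Q\tom B$ in $\M$. It is moreover shown there that the $\N$-2-kernel $k_e$ of $e$ is again an $\N$-2-kernel of $f$; that $e$ is the $\N$-2-cokernel of $k_e$ (equivalently, this is the third bullet of condition $(ii)$ applied to the $\N$-2-cokernel $e$, whose trivial $(\E,\M)$-factorization makes $k_e$ its own $\N$-2-kernel); and dually that the $\N$-2-cokernel $c_m$ of $m$ is an $\N$-2-cokernel of $f$ and that $m$ is the $\N$-2-kernel of $c_m$.

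The next step is to propagate these identifications through one further layer of $\N$-2-kernel and $\N$-2-cokernel, using that $\N$-2-kernels and $\N$-2-cokernels are unique up to equivalence (the proposition asserting this, stated just before the biisoinserter material) together with the observation, recorded in the proof of \thex\ref{theorchargrandistwocat}, that being an $\N$-2-kernel, resp.\ $\N$-2-cokernel, of a fixed morphism is invariant under composing with an equivalence and under replacing the morphism by an isomorphic one. Since both $k_e$ and the $\N$-2-kernel $k_f$ of $f$ are $\N$-2-kernels of $f$, they are equivalent over $A$, so any $\N$-2-cokernel of $k_f$ is an $\N$-2-cokernel of $k_e$; hence $c_{k_f}\simeq c_{k_e}\simeq e$. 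Dually, $c_m$ and the $\N$-2-cokernel $c_f$ of $f$ are both $\N$-2-cokernels of $f$, whence $k_{c_f}\simeq k_{c_m}\simeq m$. Pasting the isomorphic 2-cells witnessing these two equivalences with the factorization isomorphism $f\simeq m\c e$ then produces an isomorphic 2-cell $f\simeq k_{c_f}\c c_{k_f}$, which is precisely the asserted factorization, and the triangle in the statement is filled by this composite 2-cell.

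The only point needing care is the bookkeeping of objects: $c_{k_f}$ has as codomain the $\N$-2-cokernel object of $k_f$ while $k_{c_f}$ has as domain the $\N$-2-kernel object of $c_f$, and a priori these are only equivalent (both equivalent to $Q$) rather than literally equal, so the composite $k_{c_f}\c c_{k_f}$ must be understood up to the comparison equivalence between them, which is absorbed into the isomorphic 2-cell of the statement. Apart from this, I expect no substantial obstacle: all the hard work — the factorization system, the biequivalence, and the mutual $\N$-2-kernel/$\N$-2-cokernel relations between morphisms in $\E$ and $\M$ — has already been carried out in \thex\ref{theorchargrandistwocat}, and what remains is a routine chase of equivalences.
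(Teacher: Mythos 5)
Your proposal is correct and follows essentially the same route as the paper's own proof: factor $f$ as $m\c e$ via the $(\E,\M)$-factorization system of \thex\ref{theorchargrandistwocat}, identify $e$ with $c_{k_e}\simeq c_{k_f}$ and $m$ with $k_{c_m}\simeq k_{c_f}$ using the mutual kernel--cokernel relations and uniqueness of $\N$-2-(co)kernels up to equivalence, and paste. Your extra remark about the comparison equivalence between the two copies of $Q$ is a point the paper elides by writing equalities, but it is the same argument.
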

					\begin{proof}
						By definition of Grandis 2-exact 2-category, every morphism $f$ factorizes as an $\N$-2-cokernel $e$ followed by an $\N$-2-kernel $m$. Since every $\N$-2-cokernel is an $\N$-2-cokernel of its $\N$-2-kernel, we obtain that $e=c_{k_e}$; and dually $m=k_{c_m}$. By the proof of \thex\ref{theorchargrandistwocat}, the $\N$-2-kernel $k_e$ of $e$ coincides with the $\N$-2-kernel of $f$, and the $\N$-2-cokernel $c_m$ of $m$ coincides with the $\N$-2-cokernel of $f$. Whence
						$$f=m\c e=k_{c_m}\c c_{k_e}=k_{c_f}\c c_{k_f}.$$
						Analogously for a Puppe 2-exact (2-pointed) $\L$.
					\end{proof}
					
					In fact, we can prove the following.
					
					\begin{proposition}\label{propfactthreepieces}
						Let $\L$ be a 2-category equipped with a 2-ideal $\N$ of null morphisms and null 2-cells, such that $\L$ has all $\N$-2-kernels and $\N$-2-cokernels and $\N$ is a closed 2-ideal. Then every morphism $f$ in $\L$ has a three-pieces factorization up to isomorphism as
						\begin{cd}[5.25][5.25]
							A \arrow[r,"{f}"] \arrow[d,two heads,"{c_{k_f}}"',""{name=A}] \& B \\
							C(k_f) \arrow[r,"z"'] \& K(c_f) \arrow[u,rightarrowtail,"{k_{c_f}}"',,""{name=B}]
							\arrow[from=A,to=B, iso]
						\end{cd}
						where $c_{k_f}$ is the $\N$-2-cokernel of the $\N$-2-kernel of $f$ and $k_{c_f}$ is the $\N$-2-kernel of the $\N$-2-cokernel of $f$.
					\end{proposition}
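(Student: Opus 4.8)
The plan is to reproduce, in dimension $2$, the classical construction of the factorization $\operatorname{coker}(\ker f)\to\bullet\to\ker(\operatorname{coker} f)$ of a morphism in a Puppe exact category. Write $k_f\:K(f)\to A$ for the $\N$-2-kernel of $f$, with structure isomorphic $2$-cell $\alpha\: f\c k_f\Rightarrow n$ onto a null morphism $n$, and $c_f\:B\to C(f)$ for the $\N$-2-cokernel of $f$, which comes with a structure isomorphic $2$-cell exhibiting $c_f\c f$ as isomorphic to a null morphism. The first step is to factor $f$ through $k_{c_f}$: since $c_f\c f$ is isomorphic to a null morphism, the $1$-dimensional universal property of the $\N$-2-kernel $k_{c_f}\:K(c_f)\to B$ of $c_f$ (\defx\ref{deftwokernel}) yields a morphism $f'\:A\to K(c_f)$ together with an isomorphic $2$-cell $\gamma\: f\Rightarrow k_{c_f}\c f'$.

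The second step is to push $f'$ through the $\N$-2-cokernel $c_{k_f}$ of $k_f$. To this end I first check that $f'\c k_f\:K(f)\to K(c_f)$ is isomorphic to a null morphism: pasting $\gamma^{-1}\ast k_f$ with the structure $2$-cell $\alpha$ produces an isomorphic $2$-cell $k_{c_f}\c(f'\c k_f)\Rightarrow n$ onto the null morphism $n$, so $k_{c_f}\c(f'\c k_f)$ is isomorphic to a null morphism. Since $\N$ is a closed $2$-ideal, the $\N$-2-kernel $k_{c_f}$ reflects null morphisms in the sense of \defx\ref{defreflnull}; applying this to $f'\c k_f$ produces a null morphism $\widehat{\psi}\:K(f)\to K(c_f)$ and an isomorphic $2$-cell $\psi\: f'\c k_f\Rightarrow\widehat{\psi}$. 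Now that $f'\c k_f$ is isomorphic to a null morphism, the $1$-dimensional universal property of the $\N$-2-cokernel $c_{k_f}\:A\to C(k_f)$ of $k_f$ (the dual of \defx\ref{deftwokernel}) gives a morphism $z\:C(k_f)\to K(c_f)$ and an isomorphic $2$-cell $\zeta\: f'\Rightarrow z\c c_{k_f}$. Composing, the isomorphic $2$-cell $(k_{c_f}\ast\zeta)\c\gamma\: f\Rightarrow k_{c_f}\c z\c c_{k_f}$ exhibits the claimed three-pieces factorization, with $c_{k_f}$ the $\N$-2-cokernel of the $\N$-2-kernel of $f$ and $k_{c_f}$ the $\N$-2-kernel of the $\N$-2-cokernel of $f$.

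The argument is a routine $2$-dimensional diagram chase once the right universal properties are invoked, so the main point to be careful about is the bookkeeping of $2$-cells. In particular, each time a universal property or a reflection property is applied I must verify that its hypotheses hold verbatim: the $2$-cells witnessing nullity of $c_f\c f$, of $f\c k_f$, hence of $k_{c_f}\c(f'\c k_f)$, and finally of $f'\c k_f$ itself are all genuine isomorphic $2$-cells (they arise by pasting structure isomorphisms with $\gamma^{\pm 1}$ and with $\psi$), so that the full ``reflects null morphisms'' property of \defx\ref{defreflnull} — rather than the weaker ``weakly reflects'' variant of \defx\ref{defweaklyreflect} — applies to $k_{c_f}$. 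Since the statement only asserts a factorization up to an isomorphic $2$-cell, there is no need to track the accompanying invertible-null-$2$-cell coherence data that the universal properties also produce, which keeps the verification light; when $\L$ is moreover Grandis $2$-exact, the middle morphism $z$ turns out to be an equivalence, recovering \corx\ref{corollckkc}, but that refinement is not needed here.
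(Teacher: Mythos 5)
Your construction is correct, but it is the formal dual of the one the paper gives: the paper first applies the universal property of the $\N$-2-cokernel $c_{k_f}$ to produce $u\:C(k_f)\to B$ with $f\cong u\c c_{k_f}$, then uses that $c_{k_f}$ \emph{coreflects} null morphisms to see that $c_f\c u$ is isomorphic to a null morphism, and finally factors $u$ through the $\N$-2-kernel $k_{c_f}$; you instead start from the kernel side, producing $f'\:A\to K(c_f)$ first and using that $k_{c_f}$ \emph{reflects} null morphisms before factoring through the cokernel. Both routes are legitimate because the closedness hypothesis is self-dual, and the paper itself records your variant in \remx\ref{remdualthreepiecesfact}, where the resulting middle morphism is called $z'$ and is shown to agree with the paper's $z$ up to an isomorphic 2-cell; so your argument proves the stated proposition just as well. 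You also correctly isolate the one genuine subtlety, namely that the isomorphism $k_{c_f}\c(f'\c k_f)\Rightarrow n$ obtained by pasting $\gamma^{-1}\ast k_f$ with $\alpha$ carries no compatibility with the structure 2-cell of $k_{c_f}$ as a kernel, so the full reflection property of \defx\ref{defreflnull} (rather than the weak variant of \defx\ref{defweaklyreflect}) is what must be invoked — this is exactly the point the paper emphasizes in the remark following the proposition, stated there for the dual coreflection. The only thing your write-up loses relative to the paper's is the explicit record of the intermediate morphism on the cokernel side ($u$, respectively your $v$ in the notation of \remx\ref{remdualthreepiecesfact}) and its coherence data, which the paper later needs in \corx\ref{corollfactingrandis2exact}; for the proposition as stated this omission is harmless.
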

					\begin{proof}
						By the universal property of the $\N$-2-cokernel $c_{k_f}$ (with structure 2-cell $\delta$), there exist $u\:C(k_f)\to B$ and $\mu\:f\aR{}u\c c_{k_f}$ such that the pasting
						\begin{cd}
							{} \&[-2ex] {} \&[-2ex] {} \&[-2ex] {B}\\[-4ex]
							{K(f)} \arrow[rr,"{\hspace{2ex}\delta}"'{pos=0.57, inner sep=0.01ex}, iso, shift right= 1.65ex, start anchor={[xshift=-0.1ex]}] \arrow[rr,"{\hspace{2ex}\alpha^{-1}}"'{pos=0.57, inner sep=0.001ex}, iso, shift left= 4.5ex, start anchor={[xshift=-0.1ex]}] \arrow[rrru,"", null, bend left=40] \arrow[rr,"", null, bend right=30] \arrow[rrru,"", null, bend right=72] \arrow[r,rightarrowtail,"{k_f}"] \& {A} \arrow[r,"{\hspace{2ex}\nu}"'{pos=0.99}, iso, shift right= 4.25ex, start anchor={[xshift=3.5ex]}] \arrow[r,"{\hspace{2ex}\mu}"'{pos=0.69,inner sep=0.1ex}, iso, shift left= 2.8ex, xshift=3.9ex] \arrow[rru,"f", bend left=21]\arrow[r,two heads,"{c_{k_f}}"{inner sep=0.2ex}] \& {C(k_f)} \arrow[ru,"{\exists u}"', dashed] \& {}
						\end{cd}
						is an invertible null 2-cell, where $\alpha$ is the structure 2-cell of the $\N$-2-kernel $k_f$ of $f$. Then $c_f\c u\c c_{k_f}$ is isomorphic to null, pasting $\mu^{-1}$ with the structure 2-cell $\beta$ of the $\N$-2-cokernel $c_f$. Since $\N$ is a closed 2-ideal, the $\N$-2-cokernel $c_{k_f}$ coreflects null morphisms, and thus $c_f\c u$ is isomorphic to a null morphism via a nice invertible 2-cell $\psi$. We can then apply the universal property of the $\N$-2-kernel $k_{c_f}$ (with structure 2-cell $\gamma$) to induce $z\:C(k_f)\to K(c_f)$ and $\xi\:u\aR{}k_{c_f}\c z$ such that the pasting
						\begin{cd}
							{C(k_f)} \arrow[rrrd,""{name=C}, null, bend right=69] \arrow[rd,"{\exists z}"'{name=A}, dashed] \arrow[rrd,"u"{name=B}, bend left=20] \arrow[rrrd,"", null, bend left] \&[-2ex] {} \&[-2ex] {} \&[-2ex] {}\\[-4ex]
							{} \& {K(c_f)} \arrow[r,"{\exists \xi}"'{pos=0.65}, iso, shift left= 3ex, start anchor={[xshift=-8ex]}] 
							\arrow[r,"{\nu}"'{pos=0.7}, iso, shift right= 2.8ex, xshift=-9.3ex] 
							\arrow[r,rightarrowtail,"{k_{c_f}}"'{inner sep=0.17ex}] \arrow[rr,""{name=D}, bend right=42, null] \& {B} \arrow[r,"{\psi^{-1}}"'{pos=0.65}, iso, shift left= 4ex, start anchor={[xshift=-8ex]}] \arrow[r,"{\gamma}"'{pos=0.65}, iso, shift right= 2ex, start anchor={[xshift=-8ex]}]  \arrow[r,two heads,"{c_f}"] \& {C(f)}
						\end{cd}
						is an invertible null 2-cell. We have thus obtained a three-pieces factorization of $f$ as 
						\begin{cd}[5.75][5.75]
							A \arrow[r,"{f}"]\arrow[r,iso,"{\mu}"'{pos=0.33},shift right=2.5ex,xshift=-2.7ex] \arrow[d,two heads,"{c_{k_f}}"',""{name=A}] \& B \\
							C(k_f) \arrow[r,iso,"{\xi}"{pos=0.67},shift left=2.5ex,xshift=2.7ex]\arrow[ru,"u"{inner sep=0.2ex}]\arrow[r,"z"'] \& K(c_f) \arrow[u,rightarrowtail,"{k_{c_f}}"',,""{name=B}]
						\end{cd}
					\end{proof}
					
					\begin{remark}\label{remdualthreepiecesfact}
						Dually to the proof of \prox\ref{propfactthreepieces}, we also obtain a three-pieces factorization of $f$ as
						\begin{cd}[5.75][5.75]
							A \arrow[rd,"v"{inner sep=0.2ex}]\arrow[r,"{f}"]\arrow[r,iso,"{\lambda}"'{pos=0.67},shift right=2.5ex,xshift=2.7ex] \arrow[d,two heads,"{c_{k_f}}"',""{name=A}] \& B \\
							C(k_f) \arrow[r,iso,"{\xi'}"{pos=0.33},shift left=2.5ex,xshift=-2.7ex]\arrow[r,"z'"'] \& K(c_f) \arrow[u,rightarrowtail,"{k_{c_f}}"',,""{name=B}]
						\end{cd}
						applying first the universal property of the $\N$-2-kernel $k_{c_f}$ and then the universal property of the $\N$-2-cokernel $c_{k_f}$.
						
						It is straightforward to prove that there exists an isomorphic 2-cell $\eta\:z'\aR{}z$ such that
						\begin{eqD*}
							\begin{cd}*[6.75][6.75]
								A\arrow[rd,"{f}"'{description}] \arrow[r,"{v}"]\arrow[r,iso,"{\lambda^{-1}}"'{pos=0.67},shift right=2.5ex,xshift=2.9ex] \arrow[d,two heads,"{c_{k_f}}"',""{name=A}] \& K(c_f)\arrow[d,rightarrowtail,"{k_{c_f}}",,""{name=B}] \\
								C(k_f) \arrow[r,iso,"{\mu}"{pos=0.33,inner sep=0.6ex},shift left=2.5ex,xshift=-2.9ex]\arrow[r,"u"'] \& B
							\end{cd}
							\quad \h[1.5] = \h[1.5] \quad
							\begin{cd}*[6.75][6.75]
								A \arrow[r,"{v}"]\arrow[r,iso,"{\xi'}"'{pos=0.33},shift right=2.3ex,xshift=-2.9ex] \arrow[d,two heads,"{c_{k_f}}"',""{name=A}] \& K(c_f)\arrow[d,rightarrowtail,"{k_{c_f}}",,""{name=B}] \\
								C(k_f) \arrow[r,iso,"{\xi^{-1}}"{pos=0.67,inner sep=0.6ex},shift left=2.3ex,xshift=2.9ex]\arrow[ru,bend left=15,"z'"{inner sep=0.02ex},""'{name=A}]\arrow[ru,bend right=15,"z"'{pos=0.4,inner sep=0.12ex},""{name=B}]\arrow[r,"u"'] \& B
								\arrow[r,iso,"{\eta}",from=A,to=B]
							\end{cd}
						\end{eqD*}
						using that $k_{c_f}$ reflects null 2-cells since $\N$ is a closed 2-ideal.
					\end{remark}
					
					\begin{remark}
						In \cite{Dup08}, Dupont described the problem of finding a three-pieces factorization of morphisms in dimension 2, using 2-cokernels and 2-kernels. Here we presented a solution to this problem, adding the assumption that the 2-ideal is closed. Notice that in the proof of \prox\ref{propfactthreepieces} we needed that $c_{k_f}$ coreflects null morphisms rather than just, for example, weakly coreflects null morphisms, since in general 
						\begin{cd}[5][5]
							{K(f)} \& A \& B \& {C(f)}
							\arrow[rightarrowtail,"{k_f}"', from=1-1, to=1-2]
							\arrow[null,bend left=35, from=1-1, to=1-3]
							\arrow[iso,"{\alpha^{-1}}"{pos=0.34, inner sep=0ex},shift left=4, from=1-1, to=1-3]
							\arrow[null,bend left=43, from=1-1, to=1-4]
							\arrow[null,bend right=43, from=1-1, to=1-4]
							\arrow[iso,"\nu"{pos=0.56}, shift left=9, from=1-1, to=1-4]
							\arrow[iso,"\nu"{pos=0.44}, shift right=9, from=1-1, to=1-4]
							\arrow["f", from=1-2, to=1-3]
							\arrow[null,bend right=35, from=1-2, to=1-4] \arrow[iso,"\beta"'{pos=0.6}, shift right=4, from=1-2, to=1-4]
							\arrow[two heads,"{c_f}", from=1-3, to=1-4]
						\end{cd}
						need not be an invertible null 2-cell, where $\alpha$ and $\beta$ are the structure 2-cells of $k_f$ and $c_f$ respectively. The relation between the closedness condition of a 2-ideal and Dupont's work is explored in next section.
					\end{remark}
					
					\begin{corollary}[of \thex\ref{theorchargrandistwocat}]\label{corollfactingrandis2exact}
						In a Grandis 2-exact (or Puppe 2-exact) 2-category $\L$, the morphisms $z$ of the three-pieces factorizations of morphisms described in \prox\ref{propfactthreepieces} are equivalences, so that every morphism $f$ factorizes up to isomorphism as the $\N$-2-cokernel of its $\N$-2-kernel followed by the $\N$-2-kernel of its $\N$-2-cokernel.
					\end{corollary}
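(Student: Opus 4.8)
The plan is to identify the interpolating morphism $z$ of the three-pieces factorization with an equivalence coming from the $(\E,\M)$-factorization supplied by \thex\ref{theorchargrandistwocat}. First I would recall from the proof of \thex\ref{theorchargrandistwocat} that, when $\L$ is Grandis 2-exact, every morphism $f\:A\to B$ has an $(\E,\M)$-factorization $f\iso m\c e$ with $e\in\E$ and $m\in\M$, and that there $e$ is an $\N$-2-cokernel of the $\N$-2-kernel $k_f$ of $f$ while $m$ is an $\N$-2-kernel of the $\N$-2-cokernel $c_f$ of $f$. Since $\N$-2-kernels and $\N$-2-cokernels are unique up to equivalence over the relevant object, this yields an equivalence $i_e\:C(k_f)\to P$ over $A$ with $i_e\c c_{k_f}\iso e$ and an equivalence $i_m\:P\to K(c_f)$ over $B$ with $k_{c_f}\c i_m\iso m$, where $P$ is the middle object of this factorization. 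In particular $i_m\c i_e\:C(k_f)\to K(c_f)$ is an equivalence.

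Next I would use the explicit construction in the proof of \prox\ref{propfactthreepieces}: there $z$ is the morphism induced by the universal property of the $\N$-2-kernel $k_{c_f}$ from a morphism $u\:C(k_f)\to B$, which is itself the morphism induced by the universal property of the $\N$-2-cokernel $c_{k_f}$ from $f$. The point is that both of these induced morphisms are unique up to isomorphic 2-cell, by the 2-dimensional parts of the respective universal properties together with the lemma following \prox\ref{propkerarefaith}. So it suffices to verify that $i_m\c i_e$ meets the same two universal characterizations. On the one hand, $(m\c i_e)\c c_{k_f}\iso m\c e\iso f$, and this isomorphism is compatible with the structure 2-cells of $k_f$ and $c_{k_f}$, so $m\c i_e$ is isomorphic to the morphism $u$ induced through $c_{k_f}$. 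On the other hand, $k_{c_f}\c(i_m\c i_e)\iso m\c i_e\iso u$ compatibly with the structure 2-cell of $k_{c_f}$, so $i_m\c i_e$ is isomorphic to the morphism induced through $k_{c_f}$, namely $z$. Hence $z\iso i_m\c i_e$ is an equivalence. The last assertion of the corollary then follows at once: since $z$ is an equivalence, $k_{c_f}\c z$ is again an $\N$-2-kernel of $c_f$, so the three-pieces factorization $f\iso k_{c_f}\c z\c c_{k_f}$ collapses to a factorization of $f$ as the $\N$-2-cokernel of its $\N$-2-kernel followed by the $\N$-2-kernel of its $\N$-2-cokernel, i.e.\ to \corx\ref{corollckkc}.

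The main obstacle I anticipate is the coherence bookkeeping in the second paragraph: one must check that the chains of isomorphic 2-cells exhibiting $m\c i_e$ and $i_m\c i_e$ as the relevant factorizations are genuinely compatible with the structure 2-cells of the $\N$-2-kernels and with the components of $\nu$, so that the uniqueness-up-to-iso clauses of the universal properties of $c_{k_f}$ and $k_{c_f}$ really do apply. This is the kind of pasting computation the paper elsewhere calls straightforward, but it carries all the content; everything else is formal manipulation of equivalences. An alternative would be to factor $z\iso m_z\c e_z$ and compare the $(\E,\M)$-factorizations $f\iso(k_{c_f}\c m_z)\c(e_z\c c_{k_f})$ and $f\iso m\c e$ through their essential uniqueness; but this route stalls because morphisms in $\E$ need not be epimorphic, so one is forced back onto the universal properties anyway, and I expect the first route to be cleaner.
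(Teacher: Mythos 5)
Your argument is correct, and it reaches the conclusion by a genuinely different mechanism from the paper's, even though both start from the same place. Both proofs first identify the outer pieces of the three-pieces factorization with the $(\E,\M)$-factorization of $f$ coming from \thex\ref{theorchargrandistwocat} and \corx\ref{corollckkc} (the paper phrases this as ``$u$ coincides up to isomorphism with $k_{c_f}$, and dually $v$ with $c_{k_f}$''). From there the paper finishes by reading the equation of \remx\ref{remdualthreepiecesfact} as exhibiting $z$ (up to the isomorphism $\eta\:z'\aR{}z$) as the diagonal lifting connecting the two $(\E,\M)$-factorizations $f\iso u\c c_{k_f}$ and $f\iso k_{c_f}\c v$, and then invokes the standard fact that such comparison liftings are equivalences; this route goes through the orthogonality of the factorization system and, via \remx\ref{remdualthreepiecesfact}, through the closedness of $\N$ (needed there to produce $\eta$). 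You instead construct the equivalence directly as $i_m\c i_e$, composing the comparison equivalences between the two $\N$-2-cokernels of $k_f$ and between the two $\N$-2-kernels of $c_f$, and identify it with $z$ by two applications of the uniqueness clauses of the universal properties. What your route buys is that it bypasses the dual three-pieces factorization of \remx\ref{remdualthreepiecesfact} and the orthogonality of $(\E,\M)$ altogether, using only the essential uniqueness of 2-(co)kernels together with their 2-dimensional universal properties; what it costs is precisely the coherence bookkeeping you flag, namely that each invocation of a uniqueness clause requires verifying that the invertible 2-cell being lifted satisfies the null-compatibility hypothesis of the relevant 2-dimensional universal property. Those verifications are of the same ``straightforward'' pasting nature as the ones the paper leaves implicit, so I do not regard them as a gap; your closing observation that $k_{c_f}\c z$ is again an $\N$-2-kernel of $c_f$ then correctly recovers the final assertion.
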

					\begin{proof}
						We already proved in \corx\ref{corollckkc} that every morphism $f$ factorizes as 
						\begin{cd}[0]
							A \ar[rr,iso,shift right=0.75 ex]\ar[rr,bend left=10,"f"]\ar[rd,twoheadrightarrow,"{c_{k_f}}"'{pos=0.7}]\&\& B \\
							\& Q \ar[ru,rightarrowtail,"{k_{c_f}}"'{pos=0.3}]
						\end{cd}
						It is straightforward to prove that $k_{c_f}\:Q\to B$ coincides (up to isomorphism) with the induced morphism $u$ constructed in the proof of \prox\ref{propfactthreepieces}, so that $u$ is an $\N$-2-kernel. Dually, $c_{k_f}\:A\to Q$ coincides (up to isomorphism) with the induced morphism $v$ described in \remx\ref{remdualthreepiecesfact}, and $v$ is thus an $\N$-2-cokernel. By \thex\ref{theorchargrandistwocat}, $\L$ has a factorization system given by all $\N$-2-cokernels and all $\N$-2-kernels. The equation 
						\begin{eqD*}
							\begin{cd}*[6.75][6.75]
								A\arrow[rd,"{f}"'{description}] \arrow[r,two heads,"{v}"]\arrow[r,iso,"{\lambda^{-1}}"'{pos=0.67},shift right=2.5ex,xshift=2.9ex] \arrow[d,two heads,"{c_{k_f}}"',""{name=A}] \& K(c_f)\arrow[d,rightarrowtail,"{k_{c_f}}",,""{name=B}] \\
								C(k_f) \arrow[r,iso,"{\mu}"{pos=0.33,inner sep=0.6ex},shift left=2.5ex,xshift=-2.9ex]\arrow[r,rightarrowtail,"u"'] \& B
							\end{cd}
							\quad \h[1.5] = \h[1.5] \quad
							\begin{cd}*[6.75][6.75]
								A \arrow[r,two heads,"{v}"]\arrow[r,iso,"{\xi'}"'{pos=0.33},shift right=2.3ex,xshift=-2.9ex] \arrow[d,two heads,"{c_{k_f}}"',""{name=A}] \& K(c_f)\arrow[d,rightarrowtail,"{k_{c_f}}",,""{name=B}] \\
								C(k_f) \arrow[r,iso,"{\xi^{-1}}"{pos=0.67,inner sep=0.6ex},shift left=2.3ex,xshift=2.9ex]\arrow[ru,bend left=15,"z'"{inner sep=0.02ex},""'{name=A}]\arrow[ru,bend right=15,"z"'{pos=0.4,inner sep=0.12ex},""{name=B}]\arrow[r,rightarrowtail,"u"'] \& B
								\arrow[r,iso,"{\eta}",from=A,to=B]
							\end{cd}
						\end{eqD*}
						of \remx\ref{remdualthreepiecesfact} then shows that $z$ is the lifting given by such factorization system that connects two different factorizations of $f$ (up to iso) as an $\N$-2-cokernel followed by an $\N$-2-kernel. And we conclude that $z$ is an equivalence.
					\end{proof}
					
					\begin{remark}
						We can view \corx\ref{corollckkc} and \corx\ref{corollfactingrandis2exact} as saying that a Grandis 2-exact (or Puppe 2-exact) 2-category satisfies a 2-categorical generalization of the first isomorphism theorem from algebra.
					\end{remark}
					
					We conclude this section by showing that our theory captures as examples all Grandis exact categories in the 1-dimensional sense, and thus all abelian categories.
					
					\begin{proposition}
						A locally discrete 2-category $\L$ is Grandis 2-exact (respectively, Puppe 2-exact) if and only if its underlying 1-category is Grandis exact (in the usual 1-dimensional sense).
					\end{proposition}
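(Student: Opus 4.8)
The plan is to use the ideal-theoretic condition~$(ii)$ of \thex\ref{theorchargrandistwocat} and to observe that, when $\L$ is locally discrete, each of its $2$-dimensional ingredients degenerates to the corresponding $1$-dimensional notion for the underlying $1$-category $\underline{\L}$; condition~$(ii)$ then becomes the $1$-dimensional characterization of Grandis exact categories from \cite{JanWei16}, and similarly in the Puppe case through \corx\ref{corollpuppe}. First I would record the degeneration of $2$-ideals. As $\L$ is locally discrete, so is $\L\op\x\L$, and every hom-category $\Hom{A}{B}$ is discrete; a faithful functor that is injective on objects and lands in a discrete category has discrete domain (two parallel arrows have equal image, so coincide; an arrow between distinct objects would have image between distinct objects of a discrete category), hence each $N(A,B)$ is discrete, $N$ is an ordinary functor and $\nu$ an ordinary natural transformation. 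By the opening remark of Section~\ref{sectiontwoideals}, a $2$-ideal on $\L$ is therefore exactly an ideal of null morphisms on $\underline{\L}$.

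Next I would treat $\N$-$2$-kernels, and dually $\N$-$2$-cokernels. Since all $2$-cells of $\L$ are identities, in \defx\ref{deftwokernel} the structure $2$-cell $\alpha$ is an identity, so $f\c k$ is literally a null morphism; every ``isomorphic to a null morphism'' clause means ``equal to'', and every ``is an (invertible) null $2$-cell'' clause holds automatically, since it concerns an identity $2$-cell on a null morphism, which is null by the first bullet of \thex\ref{theorchar2ideal}. Condition~$(1)$ of \defx\ref{deftwokernel} then says that every $z$ with $f\c z$ null factors as $z=k\c u$; condition~$(2)$, whose hypothesis forces $k\c u=k\c v$, says that $k\c u=k\c v$ implies $u=v$, i.e.\ that $k$ is monic. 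Together these are exactly the $1$-dimensional universal property of an $\N$-kernel of $f$, so $\L$ has all $\N$-$2$-kernels and $\N$-$2$-cokernels if and only if $\underline{\L}$ has all $\N$-kernels and $\N$-cokernels.

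Then I would check closedness and the remaining two axioms. Reflecting and coreflecting null $2$-cells is vacuous in $\L$, the only $2$-cells being identities, which are null on null morphisms; for a morphism $k$, reflecting null morphisms in the sense of \defx\ref{defreflnull} degenerates to ``$k\c s$ null $\Rightarrow$ $s$ null'', and dually coreflecting to ``$s\c c$ null $\Rightarrow$ $s$ null''. Hence, given that $\underline{\L}$ has all kernels and cokernels, $\N$ is a closed $2$-ideal if and only if it is closed in the sense of \prox\ref{propcharclosed}. Finally, ``every $\N$-$2$-kernel is the $\N$-$2$-kernel of its $\N$-$2$-cokernel, and dually'' becomes the corresponding $1$-dimensional statement (its structure-$2$-cell clause being automatic), and ``every morphism factorizes up to isomorphism as an $\N$-$2$-cokernel followed by an $\N$-$2$-kernel'' becomes ``every morphism factorizes as an $\N$-cokernel followed by an $\N$-kernel''.

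Putting everything together, condition~$(ii)$ of \thex\ref{theorchargrandistwocat} holds for $\L$ if and only if $\underline{\L}$, equipped with the ideal $\N$, satisfies the $1$-dimensional analogue of condition~$(ii)$, which characterizes Grandis exact categories \cite{JanWei16}; this settles the Grandis case. For the Puppe case I would note that a biterminal object that is also biinitial in a locally discrete $2$-category is a zero object of $\underline{\L}$, and that the canonical $2$-ideal $(0)$ of \exax\ref{zero2ideal} restricts to the ideal of morphisms of $\underline{\L}$ factoring through the zero object; running the same chain of equivalences through \corx\ref{corollpuppe} then shows that $\L$ is Puppe $2$-exact if and only if $\underline{\L}$ is Puppe exact. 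I expect the main obstacle to be purely bookkeeping rather than conceptual: one must verify, clause by clause, that every side condition involving invertible null $2$-cells holds automatically in the locally discrete case and that all the $2$-dimensional universal properties collapse exactly as claimed; once that is done, matching the surviving $1$-dimensional conditions with the characterization of \cite{JanWei16} is immediate.
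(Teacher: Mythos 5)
Your proposal is correct and follows essentially the same route as the paper: both reduce to condition $(ii)$ of \thex\ref{theorchargrandistwocat} and check clause by clause that each $2$-dimensional ingredient (2-ideal, 2-(co)kernel, closedness, factorization) collapses to its $1$-dimensional counterpart in a locally discrete $2$-category, then match with the characterization of \cite{JanWei16}. Your extra observation that an injective-on-objects faithful functor into a discrete category has discrete domain is a nice explicit justification of the degeneration of $2$-ideals, and the paper additionally remarks that the ``kernel of its cokernel'' clause is automatic in dimension~1, which reconciles the surviving conditions with the $1$-dimensional characterization; otherwise the arguments coincide.
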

					\begin{proof}
						In a locally discrete 2-category, all 2-cells are identities and thus all pasting diagrams are commutative diagrams. A 2-ideal of null morphisms and null 2-cells is precisely an ideal of null morphisms on the underlying 1-category of $\L$. Moreover, 2-kernels and 2-cokernels are precisely kernels and cokernels in the usual 1-dimensional sense. Indeed, the 1-dimensional universal property of a 2-kernel gives the existence condition and the 2-dimensional universal property gives the uniqueness condition of the usual universal property of kernels. Reflecting null morphisms in the 2-dimensional sense means precisely reflecting null morphisms in the ordinary sense; while all morphisms automatically reflect null 2-cells. So closed 2-ideals on $\L$ are precisely closed ideals in the ordinary sense. Notice that here there is no difference between weakly reflecting null morphisms and reflecting null morphisms. It is now straightforward to see that $\L$ satisfies condition $(ii)$ of \thex\ref{theorchargrandistwocat} precisely when its underlying 1-category is Grandis exact. Notice that every kernel being the kernel of its cokernel, as well as the dual condition, is automatic in dimension 1.
						
						For the Puppe exact case, notice that a bizero object in $\L$ is precisely a zero object in the underlying 1-category of $\L$. Moreover, the canonical 2-ideal $(0)$ associated to a 2-pointed locally discrete 2-category is exactly the canonical ideal $(0)$ associated to the underlying 1-category (which is pointed). So we conclude. 
					\end{proof}
					
					We then have the following corollary.
					
					\begin{corollary}
						Every Grandis exact category in the ordinary sense is a Grandis 2-exact locally discrete 2-category.
						
						Every Puppe exact category in the ordinary sense, and thus every abelian category, is a Puppe 2-exact locally discrete 2-category.
					\end{corollary}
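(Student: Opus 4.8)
The plan is to deduce both statements directly from the preceding Proposition, which characterizes when a locally discrete 2-category is Grandis 2-exact (resp.\ Puppe 2-exact) in terms of its underlying 1-category. First I would make precise the passage from a 1-category to a locally discrete 2-category: given a category $\C$, let $\L$ denote the 2-category with the same objects and morphisms as $\C$ and with only identity 2-cells, i.e.\ with each hom-set $\HomC{\C}{A}{B}$ regarded as a discrete category. By construction, the underlying 1-category of $\L$ is $\C$ itself.

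If $\C$ is Grandis exact in the ordinary sense, the Proposition applies verbatim: since the underlying 1-category of the locally discrete 2-category $\L$ is $\C$, which is Grandis exact by hypothesis, $\L$ is Grandis 2-exact. This proves the first assertion.

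For the pointed case, I would first recall that a Puppe exact category is by definition a pointed Grandis exact category, i.e.\ a Grandis exact category possessing a zero object. When $\C$ is such a category, its zero object is simultaneously biterminal and biinitial in $\L$ (the hom-categories into and out of it being singleton discrete categories), so $\L$ is 2-pointed, and the canonical 2-ideal $(0)$ of $\L$ is exactly the canonical ideal $(0)$ of $\C$, as already noted in the proof of the Proposition. Hence by the ``respectively'' clause of the Proposition, $\L$ is Puppe 2-exact. Finally, invoking the classical fact that every abelian category is Puppe exact, the same conclusion applies to an arbitrary abelian category, completing the proof.

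The only point that genuinely requires attention is that the ambient constructions match under the embedding of $\C$ into $\L$ --- that ordinary kernels and cokernels become $\N$-2-kernels and $\N$-2-cokernels, that closed ideals become closed 2-ideals, and that the zero object becomes a bizero object --- but all of this verification is already carried out in the proof of the Proposition, so no further obstacle arises.
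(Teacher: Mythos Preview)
Your proposal is correct and follows exactly the intended route: the paper states this result as an immediate corollary of the preceding Proposition with no separate proof, and your argument simply unpacks that deduction. The extra care you take in spelling out that the zero object becomes a bizero object and that the matching of constructions is already handled in the Proposition's proof is accurate and entirely in the spirit of the paper.
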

					
					\begin{remark}
						For a locally discrete 2-category $\L$, condition $(i)$ of \thex\ref{theorchargrandistwocat} precisely means that there exists an orthogonal factorization system $(\E,\M)$ on the underlying 1-category of $\L$ and an equivalence over $\L$ (which becomes an isomorphism when taking isomorphism classes) between the fibration of quotients relative to $(\E,\M)$ and the opfibration of subobjects relative to $(\E,\M)$ (with $K$ and $C$ functors). Indeed, the 1-dimensional orthogonality condition gives the existence of liftings and the 2-dimensional orthogonality condition gives uniqueness of liftings. The pseudo arrow category of $\L$ is just the usual arrow category of the underlying 1-category of $\L$, seen as a locally discrete 2-category. And a biequivalence over $\L$ is the same as an equivalence over the underlying 1-category of $\L$.
						
						Notice that the condition of having a factorization system which is $(1,1)$-proper is void for a locally discrete 2-category $\L$.
						
						Our proof of \thex\ref{theorchargrandistwocat} gives then a new proof for the 1-dimensional result of the second author and Weighill in \cite{JanWei16}, and shows that the properness condition that they ask is redundant.
					\end{remark}

					\section{Comparison with the literature and applications}
					
					In this section, we compare our 2-dimensional notion of Puppe exactness (\defx\ref{def2exactness}) with the notions developed by Dupont (\cite{Dup08}) and Nakaoka (\cite{Nak08,Nak10}) in the pointed case. To do so, we introduce a slightly weaker notion of 2-dimensional Puppe exactness in a natural way and prove that this generalizes most of the theory and the examples developed in \cite{Dup08,Nak08,Nak10}. Furthermore, we show that such weaker notion of Puppe exactness still implies a biequivalence over $\L$ given by taking kernels and cokernels. So we deduce that many examples in the literature enjoy this fundamental exactness property. 
					
					We start by recalling the 2-dimensional Puppe exactness properties studied by Dupont and Nakaoka, in the particular case of (strictly described) categories enriched over pointed groupoids. We refer the reader to the references written below for the notions of $0$-monomorphism, $0$-faithful, fully $0$-faithful, strictly described category enriched over $\mathrm{Gpd}^\ast$ (that is, over pointed groupoids), locally SCG category, root, pip and dual notions. Following the convention 3.2 of \cite{Nak10}, we consider locally SCG categories without requiring condition (LS3+).
					
					\begin{proposition}
						Let $\L$ be a strictly described category enriched over $\mathrm{Gpd}^\ast$ with a zero object (see \cite{Dup08}). Then for a morphism $f$ in $\L$ the following are equivalent:
						\begin{enumT}
							\item $f$ is 0-faithful;
							\item $f$ reflects null 2-cells.
						\end{enumT}
						Moreover, also the following are equivalent:
						\begin{enumT}
							\item $f$ is fully 0-faithful;
							\item $f$ reflects null morphisms and null 2-cells.
						\end{enumT}
					\end{proposition}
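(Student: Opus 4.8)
The plan is to deduce both equivalences by unwinding \defx\ref{defreflnull} in the present setting and comparing term by term with the definitions of $0$-faithful and fully $0$-faithful recalled from \cite{Dup08,Nak08}. The preliminary step is to describe the canonical $2$-ideal $(0)$ explicitly when $\L$ is a strictly described category enriched over $\mathrm{Gpd}^\ast$ with a (necessarily strict) zero object: for all $X,Y$ the hom-groupoids $\HomC{\L}{X}{0}$ and $\HomC{\L}{0}{Y}$ are terminal, so there is a single zero morphism $0_{X,Y}\:X\to 0\to Y$, it is the basepoint of the pointed groupoid $\HomC{\L}{X}{Y}$, the $(0)$-null morphisms from $X$ to $Y$ are exactly $\{0_{X,Y}\}$, and by \exax\ref{zero2ideal} a $(0)$-null $2$-cell between zero morphisms is exactly $\id{0_{X,Y}}$, since the $2$-cell \refs{nulltwocell} is built out of the unique $2$-cells of terminal groupoids. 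In particular every structure isomorphism $\nu$ is an identity, and the pastings with $\nu$'s together with the ``up to an invertible null $2$-cell'' clauses in \defx\ref{defreflnull} collapse to plain equalities of $2$-cells in $\L$; by \remx\ref{remcomparisonkernels} this $(0)$ is the zero ideal implicit in \cite{Dup08,Nak08,Nak10}.

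For the first equivalence I would unwind the second half of \defx\ref{defreflnull}: by the above, $f\:A\to B$ reflects $(0)$-null $2$-cells precisely when, for every $X$, every $\mu\in\operatorname{Aut}_{\HomC{\L}{X}{A}}(0_{X,A})$ with $f\star\mu=\id{0_{X,B}}$ equals $\id{0_{X,A}}$; that is, postcomposition with $f$ induces an injection $\operatorname{Aut}_{\HomC{\L}{X}{A}}(0_{X,A})\to\operatorname{Aut}_{\HomC{\L}{X}{B}}(0_{X,B})$ for every $X$. It then remains only to recognise this as the defining property of a $0$-faithful morphism; if the references phrase $0$-faithfulness in terms of all $2$-cells between morphisms isomorphic to $0$, one observes that transporting such a $2$-cell along chosen isomorphisms to the zero morphisms turns it into an automorphism of $0_{X,A}$, so that the hom-groupoids outside the basepoint component contribute nothing and the two formulations coincide.

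For the second equivalence I would unwind the first half of \defx\ref{defreflnull} in the same way: $f$ reflects $(0)$-null morphisms precisely when, for every $X$ and every $s\:X\to A$ such that $f\c s$ lies in the basepoint component of $\HomC{\L}{X}{B}$ (this is what ``$f\c s$ isomorphic to a null morphism'' means here), $s$ already lies in the basepoint component of $\HomC{\L}{X}{A}$ \emph{and} the given isomorphism $\delta\:f\c s\Rightarrow 0_{X,B}$ is of the form $f\star\psi$ for some isomorphism $\psi\:s\Rightarrow 0_{X,A}$. The first clause is the $0$-monomorphism condition, while (in the light of the first equivalence) the second clause amounts to surjectivity of $\operatorname{Aut}_{\HomC{\L}{X}{A}}(0_{X,A})\to\operatorname{Aut}_{\HomC{\L}{X}{B}}(0_{X,B})$. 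Hence ``reflects null morphisms and null $2$-cells'' translates into ``$f$ is a $0$-monomorphism and postcomposition with $f$ is bijective on automorphism groups of zero morphisms'', which is exactly the conjunction of $0$-fullness and $0$-faithfulness, i.e.\ full $0$-faithfulness as defined in \cite{Dup08,Nak08}. I expect the main obstacle to be precisely this last identification: checking that the compatibility clause of \defx\ref{defreflnull} — that $\delta$ itself, and not merely a null replacement of it, be recovered from $\psi$ — corresponds to the ``full'' in ``fully $0$-faithful'', and more broadly keeping the dictionary between the $2$-categorical coherence data of \defx\ref{defreflnull} and the set-level definitions of Dupont and Nakaoka under control; the reduction to the explicit form of $(0)$ carried out in the first step is what makes this dictionary routine.
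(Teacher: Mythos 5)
Your proposal is correct and follows essentially the same route as the paper: the paper's (very terse) proof likewise observes that in the strictly described case all null morphisms collapse to the unique zero morphism and the only null $2$-cell between parallel null morphisms is the identity, so that the coherence data ($\nu$'s and ``up to invertible null $2$-cell'' clauses) in \defx\ref{defreflnull} disappear, and then matches the resulting conditions directly against Dupont's Definitions 78 and 80. Your write-up simply makes explicit the dictionary (injectivity, respectively bijectivity, of postcomposition on the hom-data at the zero morphism) that the paper leaves as ``straightforward''.
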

					\begin{proof}
						Since $\L$ is strictly described, all morphisms through the zero object are the special zero morphism $0$ between the right source and target. The unique null 2-cell between parallel null morphisms is then the identity. It is now straightforward to conclude from \cite[{\defx{78}, \defx{80}}]{Dup08}.
					\end{proof}
					
					\begin{definition}[{\cite[{\defx{165}}]{Dup08}}] \label{defdupabelian}
						An \emph{abelian $\mathrm{Gpd}^\ast$-category} is a category $\L$ enriched over $\mathrm{Gpd}^\ast$ with a zero object, finite products and coproducts such that
						\begin{enum}
							\item $\L$ has all 2-kernels and 2-cokernels (see \remx\ref{remcomparisonkernels});
							\item every 0-monomorphism is a 2-kernel of its 2-cokernel (with appropriate structure 2-cell), and dually every 0-epimorphism is a 2-cokernel of its 2-kernel;
							\item fully 0-faithful arrows and 0-monomorphisms are stable under pushout, and dually fully 0-cofaithful arrows and 0-epimorphisms are stable under pullback.
						\end{enum}
					\end{definition}
					
					\begin{definition}[{\cite[\defx{3.7}]{Nak08}}]
						A \emph{relatively exact $\mathrm{Gpd}^\ast$-category} is a locally SCG category $\L$ (where SCG stands for symmetric categorical group) such that:
						\begin{enum}
							\item $\L$ has all 2-kernels and 2-cokernels;
							\item every faithful morphism is a 2-kernel of its 2-cokernel (with appropriate structure 2-cell), and dually every cofaithful morphism is a 2-cokernel of its 2-kernel.
						\end{enum}
					\end{definition}
					
					\begin{definition}[{\cite[\prox{179}, \defx{183}]{Dup08}}]
						A \emph{2-Puppe-exact $\mathrm{Gpd}^\ast$-category} (\'a la Dupont) is a category $\L$ enriched over $\mathrm{Gpd}^\ast$ with a zero object such that
						\begin{enum}
							\item $\L$ has all 2-kernels and 2-cokernels;
							\item every 0-faithful arrow is a 2-kernel of its 2-cokernel (with appropriate structure 2-cell), and dually every 0-cofaithful arrow is a 2-cokernel of its 2-kernel;
							\item every fully 0-faithful arrow is a root of its copip (canonically), and dually every fully 0-cofaithful arrow is a coroot of its pip.
						\end{enum}
						
						A \emph{2-abelian $\mathrm{Gpd}^\ast$-category} is a 2-Puppe-exact $\mathrm{Gpd}^\ast$-category (\'a la Dupont) that has all finite products and coproducts.
					\end{definition}
					
					\begin{remark}\label{Nakaokacomparison}
						The above notions studied by Dupont and Nakaoka are compared in \cite{Nak10}. It is shown there that
						\begin{enum}
							\item every strictly described 2-abelian $\mathrm{Gpd}^\ast$-category is a relatively exact $\mathrm{Gpd}^\ast$-category;
							\item every relatively exact $\mathrm{Gpd}^\ast$-category is an abelian $\mathrm{Gpd}^\ast$-category.
						\end{enum}
					\end{remark}
					
					In order to compare our 2-dimensional notion of Puppe exactness with the above definitions, we introduce a slightly weaker version of \defx\ref{def2exactness}. In particular, we need to weaken the concept of closed ideal, using the weaker form of reflection of null morphisms that we introduced in \defx\ref{defweaklyreflect}. Recall also \prox\ref{charactweaklyreflect}.
					
					\begin{definition}
						Let $\L$ be a 2-category and $\N=(N,\nu)$ be a 2-ideal of null morphisms and null 2-cells in $\L$. Assume that $\L$ has all $\N$-2-kernels and $\N$-2-cokernels. We call $\N$ a \emph{weakly closed 2-ideal} if all $\N$-2-kernels weakly reflect null morphisms and reflect null 2-cells, and all $\N$-2-cokernels weakly coreflect null morphisms and coreflect null 2-cells.
					\end{definition}
					
					\begin{remark}
						Clearly every closed 2-ideal is weakly closed.
						
						Recall from \prox\ref{charactweaklyreflect} that all $\N$-2-kernels weakly reflecting null morphisms is equivalent to all $\N$-2-cokernels weakly coreflecting null morphisms.
					\end{remark}
					
					\begin{proposition}\label{zeroisweaklyclosed}
						Let $\L$ be a 2-pointed 2-category, with a fixed choice $0$ of a representative for the bizero object. The 2-ideal $(0)$ canonically associated to $\L$ is weakly closed. 
					\end{proposition}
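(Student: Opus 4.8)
The plan is to check directly the four conditions defining a weakly closed $2$-ideal for $\N=(0)$; by the self-duality of the construction of $(0)$ it is enough to prove that every $(0)$-$2$-kernel weakly reflects null morphisms (in the sense of \defx\ref{defweaklyreflect}) and reflects null $2$-cells (in the sense of \defx\ref{defreflnull}). Two remarks do most of the work. First, $0$ is a null object for $(0)$: every morphism $0\to 0$ trivially factors through $0$, so $\id{0}$ is itself a null morphism. Second, by \exax\ref{zero2ideal} each category $(0)(A,B)$ is equivalent to the terminal category, hence a contractible groupoid, so between any two parallel null morphisms there is exactly one null $2$-cell and it is invertible. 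I shall also use that $(0)$-$2$-kernels are faithful and $(0)$-$2$-cokernels cofaithful (\prox\ref{propkerarefaith}), and that the components $\nu_{a,b,n}$ of the ideal $(0)$ are identities (\exax\ref{zero2ideal}).

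To show that every $(0)$-$2$-kernel weakly reflects null morphisms I would invoke \prox\ref{charactweaklyreflect}: since $\L$ has all $(0)$-$2$-kernels (this is part of the standing hypothesis in the definition of ``weakly closed''), it suffices to verify condition $(ii)$ of that proposition, namely that every morphism $h\:A\to B$ isomorphic to a null morphism $n$ factors, up to an isomorphic $2$-cell, through a null object, with the prescribed pasting an invertible null $2$-cell. But by definition $n$ factors as $A\aar{t}0\aar{i}B$, so one may take the null object to be $0$ itself, with legs $t$ and $i$ and with $\xi^{0}$ the identity; the required factorization of $h$ is then built from the given isomorphism $n\cong h$. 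Since every $2$-cell occurring in the prescribed pasting has a leg factoring through $0$, and the only components of $\nu$ involved are identities, the pasting is a composite of the canonical isomorphisms supplied by $0$ being simultaneously biterminal and biinitial; hence it is again a $2$-cell induced by the universal property of $0$, i.e.\ an invertible null $2$-cell. Thus condition $(ii)$ holds, so $(0)$-$2$-kernels weakly reflect null morphisms and, dually, $(0)$-$2$-cokernels weakly coreflect null morphisms.

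For reflection of null $2$-cells, let $k\:K\to A$ be a $(0)$-$2$-kernel and let $\mu\:s\aR{}s'\:D\to K$ be a $2$-cell between null morphisms such that the pasting of $k\star\mu$ with copies of $\nu$ is a null $2$-cell. As the $\nu$'s are identities and $k\c s$, $k\c s'$ are themselves null (so that their null replacements are themselves), this pasting is simply $k\star\mu$, now a null $2$-cell from $k\c s$ to $k\c s'$. Let $\mu_{0}\:s\aR{}s'$ be the unique null $2$-cell between $s$ and $s'$; then $k\star\mu_{0}$ is also a null $2$-cell from $k\c s$ to $k\c s'$ (null $2$-cells being stable under whiskering by morphisms), so by uniqueness $k\star\mu=k\star\mu_{0}$, and since $k$ is faithful we conclude $\mu=\mu_{0}$ is null. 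Dually, using that $(0)$-$2$-cokernels are cofaithful, every $(0)$-$2$-cokernel coreflects null $2$-cells. Combining the four statements, $(0)$ is weakly closed.

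The only step that I expect to require genuine care is the coherence bookkeeping in the second paragraph: one must make sure that the diagram of \prox\ref{charactweaklyreflect}$(ii)$ can be assembled entirely from instances of the canonical isomorphisms coming from $0$ being biterminal and biinitial (together with the trivial $\nu$'s), so that the resulting pasting really is a null $2$-cell and not merely isomorphic to one. Everything else is formal once one has recorded that $0$ is a null object and that each $(0)(A,B)$ is a contractible groupoid.
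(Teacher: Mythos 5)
Your proposal is correct and follows essentially the same route as the paper: weak reflection of null morphisms is reduced to condition $(ii)$ of \prox\ref{charactweaklyreflect} using that every null morphism factors through the null object $0$, and reflection of null $2$-cells is obtained from the uniqueness of the (null) $2$-cell between parallel null morphisms supplied by the universal property of $0$, combined with the faithfulness of the $2$-kernel. The only cosmetic difference is that the paper phrases the uniqueness step as uniqueness of the $2$-cells $k\c i\aR{}k\c i'\:0\to A$ rather than contractibility of $(0)(D,A)$, which amounts to the same use of the bizero object's universal property.
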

					\begin{proof}
						By \prox\ref{charactweaklyreflect}, all 2-kernels weakly reflect null morphisms. Indeed all null morphisms factor through $0$, which is a null object, and it is easy to see that condition (ii) of \prox\ref{charactweaklyreflect} is satisfied. We prove that 2-kernels reflect null 2-cells. So let $\mu$ be a 2-cell as on the left hand side below such that $k\ast \mu$ is a null 2-cell:
						\[\begin{tikzcd}[ampersand replacement=\&,row sep=2ex]
							\& 0 \&\&\&[-3ex]\&[-3ex]\& 0 \& K \\
							D \&\& K \& A \& {=} \& D \&\& {} \& A \\
							\& 0 \&\&\&\&\& 0 \& K
							\arrow["i", from=1-2, to=2-3]
							\arrow["\mu", Rightarrow, from=1-2, to=3-2, shorten <=1.5ex, shorten >=1.5ex]
							\arrow["i", from=1-7, to=1-8]
							\arrow[""{name=0, anchor=center, inner sep=0}, equals, from=1-7, to=3-7]
							\arrow["k", from=1-8, to=2-9]
							\arrow["t", from=2-1, to=1-2]
							\arrow["{t'}"', from=2-1, to=3-2]
							\arrow["k", from=2-3, to=2-4]
							\arrow["t", from=2-6, to=1-7]
							\arrow["{t'}"', from=2-6, to=3-7]
							\arrow["{i'}"', from=3-2, to=2-3]
							\arrow["{i'}"', from=3-7, to=3-8]
							\arrow["k"', from=3-8, to=2-9]
							\arrow["{\exists ! \cong}"{description}, draw=none, from=0, to=2-9]
							\arrow["{\exists ! \cong}"{description}, draw=none, from=2-6, to=0]
						\end{tikzcd}\]
						By uniqueness of the 2-cells $k\c i\aR{} k\c i'\:0\to A$, the pasting on the right hand side above is equal to
						\[\begin{tikzcd}[ampersand replacement=\&, row sep=2ex]
							\& 0 \\
							D \&\& K \& A \\
							\& 0
							\arrow["i", from=1-2, to=2-3]
							\arrow[""{name=0, anchor=center, inner sep=0}, equals, from=1-2, to=3-2]
							\arrow["t", from=2-1, to=1-2]
							\arrow["{t'}"', from=2-1, to=3-2]
							\arrow["k", from=2-3, to=2-4]
							\arrow["{i'}"', from=3-2, to=2-3]
							\arrow["{\exists ! \cong}"{description}, draw=none, from=0, to=2-3]
							\arrow["{\exists ! \cong}"{description}, draw=none, from=2-1, to=0]
						\end{tikzcd}\]
						Since $k$ is faithful, we conclude that $\mu$ is null. Dually for 2-cokernels. 
					\end{proof}
					
					\begin{remark}
						The 2-ideal $(0)$ canonically associated to a 2-pointed 2-category is not always a closed 2-ideal. Moreover, it does not seem to automatically satisfy any of the properties listed in condition $(ii)$ of \thex\ref{theorchargrandistwocat}. With regards to kernels not always being kernels of their cokernels, a counter-example can be found in \cite[\prox{97}]{Dup08}.
					\end{remark}
					
					\begin{definition}
						Let $\L$ be a 2-pointed 2-category, with a fixed choice $0$ of a representative for the bizero object. We say that $0$ is a \emph{strong bizero object} if for every $D\aar{t}0\aar{i} K$ and $D\aar{t'}0\aar{i'} K$ morphisms in $\L$, there is a unique 2-cell
						\[\begin{tikzcd}[ampersand replacement=\&, row sep=1ex]
							\& 0 \\
							D \&\& K \\
							\& 0
							\arrow["i", from=1-2, to=2-3]
							\arrow["{\exists ! }", Rightarrow, from=1-2, to=3-2, shorten <=1.5ex, shorten >=1.5ex]
							\arrow["t", from=2-1, to=1-2]
							\arrow["{t'}"', from=2-1, to=3-2]
							\arrow["{i'}"', from=3-2, to=2-3]
						\end{tikzcd}\]
						In other words, $0$ is a strong bizero object if all 2-cells between null morphisms are null 2-cells.
					\end{definition}
					
					\begin{proposition}
						Let $\L$ be a 2-pointed 2-category with a strong bizero object $0$. Then the 2-ideal $(0)$ canonically associated to $\L$ is a closed 2-ideal.
					\end{proposition}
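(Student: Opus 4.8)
The plan is to deduce the statement from \prox\ref{zeroisweaklyclosed} together with the strongness of $0$, rather than to reprove everything from scratch. By \prox\ref{zeroisweaklyclosed} the 2-ideal $(0)$ is already weakly closed: its $(0)$-2-kernels weakly reflect null morphisms and reflect null 2-cells, and dually its $(0)$-2-cokernels weakly coreflect null morphisms and coreflect null 2-cells. Being a closed 2-ideal differs from being a weakly closed one only in that one demands \emph{full} reflection of null morphisms by $(0)$-2-kernels (and dually full coreflection by $(0)$-2-cokernels) in the sense of \defx\ref{defreflnull}, instead of the weaker form of \defx\ref{defweaklyreflect}. So the whole task reduces to removing the extra hypothesis that the weak form carries.

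To do that, I would unwind \defx\ref{defweaklyreflect} in the case $\N=(0)$. The extra hypothesis there requires that a particular 2-cell — obtained by pasting $\delta$, the structure 2-cell $\alpha$ of the $(0)$-2-kernel, and components of $\nu$ — be an invertible null 2-cell; the definition records that this 2-cell runs between two null morphisms $D\to B$. For the 2-ideal $(0)$, by \exax\ref{zero2ideal}, null means \emph{factoring through $0$}, and the components $\nu_{a,b,n}$ are identities; hence that 2-cell is a pasting of invertible 2-cells, so it is invertible, and it runs between morphisms of the form $D\to 0\to B$. The hypothesis that $0$ is a strong bizero object says precisely that every 2-cell between such morphisms is a null 2-cell. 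Therefore the extra hypothesis of \defx\ref{defweaklyreflect} is automatically satisfied, so for $(0)$ over a strong bizero object weak reflection of null morphisms coincides with reflection of null morphisms; dually for coreflection by $(0)$-2-cokernels.

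Finally I would assemble the pieces: $(0)$-2-kernels reflect null morphisms (just obtained) and reflect null 2-cells (\prox\ref{zeroisweaklyclosed}), and dually $(0)$-2-cokernels coreflect null morphisms and null 2-cells — which is exactly the statement that $(0)$ is a closed 2-ideal. I do not expect a genuine obstacle here: the only point requiring care is the bookkeeping in \defx\ref{defweaklyreflect}, namely confirming that the 2-cell whose nullity is hypothesised really does run between morphisms that literally factor through $0$ (which is forced by the null decorations in that diagram) and that every 2-cell composing it is invertible, after which the strongness of $0$ does all the work.
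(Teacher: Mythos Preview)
Your proposal is correct and is exactly the argument the paper has in mind: the paper's own proof is the single line ``The proof is clear after \prox\ref{zeroisweaklyclosed}'', and what you have written is a faithful unpacking of why it is clear, namely that the extra hypothesis in \defx\ref{defweaklyreflect} is a 2-cell between null morphisms which is automatically invertible (as a pasting of isos) and automatically null by strongness of $0$, so weak reflection upgrades to reflection for free.
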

					\begin{proof}
						The proof is clear after \prox\ref{zeroisweaklyclosed}. 
					\end{proof}
					
					We are now ready to introduce a slightly weaker version of \defx\ref{def2exactness}.
					
					\begin{definition}\label{defweaklygrandispuppe}
						A \emph{weakly Grandis 2-exact} 2-category is a 2-category $\L$ equipped with a 2-ideal $\N$ of null morphisms and null 2-cells such that
						\begin{itemize}
							\item[-] $\L$ has all $\N$-2-kernels and $\N$-2-cokernels;
							\item[-] $\N$ is a weakly closed 2-ideal;
							\item[-] every $\N$-2-kernel $m$ is an $\N$-2-kernel of its $\N$-2-cokernel $c_m$, with structure isomorphic 2-cell between $c_m\c m$ and a null morphism given up to an invertible null 2-cell by the $\N$-2-cokernel $c_m$ of $m$; and dually for $\N$-2-cokernels;
							\item[-] every morphism $f$ in $\L$ factorizes up to isomorphism as an $\N$-2-cokernel followed by an $\N$-2-kernel
							\begin{cd}[0]
								A \ar[rr,iso,shift right=0.75 ex]\ar[rr,bend left=10,"f"]\ar[rd,twoheadrightarrow,"2-\operatorname{coker}"'{pos=0.7}]\&\& B \\
								\& Q \ar[ru,rightarrowtail,"2-\operatorname{ker}"'{pos=0.3}]
							\end{cd}
						\end{itemize}
						
						A \emph{weakly Puppe 2-exact} 2-category is a 2-pointed 2-category $\L$, with a fixed choice $0$ of a representative for a bizero object, that satisfies the conditions above of weakly Grandis 2-exactness with respect to the 2-ideal $(0)$ canonically associated to $\L$. Weakly closedness of the 2-ideal $(0)$ is automatic by \prox\ref{zeroisweaklyclosed}.
					\end{definition}
					
					\begin{remark}
						The only difference between weakly Grandis (respectively, weakly Puppe) 2-exactness and Grandis (respectively, Puppe) 2-exactness is that we only require the 2-ideal to be weakly closed rather than closed. Remember that weakly reflection of null morphisms arose naturally in Section \ref{sectiontwoideals}, when noticing that two equivalent characterizations of closed ideal in dimension 1 are not equivalent anymore in dimension 2.
						
						Notice that, of course, a 2-pointed 2-category with a strong bizero object is weakly Puppe 2-exact precisely when it is Puppe 2-exact.
					\end{remark}
					
					A large part of condition $(i)$ of \thex\ref{theorchargrandistwocat} still holds for weakly Grandis 2-exact 2-categories. One of the most delicate aspects is the orthogonality condition of the factorization system. We will need to weaken such condition as follows.
					
					\begin{definition}\label{ROFS}
						Let $\L$ be a 2-category and let $\N$ be a 2-ideal of null morphisms and null 2-cells. We say that $(\E,\M)$ is a \emph{relatively orthogonal cokernel-kernel factorization system} on $\L$ if the following conditions are satisfied:
						\begin{enum}
							\item every morphism in $\E$ is an $\N$-2-cokernel and every morphism in $\M$ is an $\N$-2-kernel;
							\item every morphism in $\L$ factors up to isomorphism as a morphism in $\E$ followed by a morphism in $\M$;
							\item if $m\in \M$ and $i$ is an equivalence then $m\circ i \in \M$, and dually for $\E$;
							\item if $m\in \M$ and $\alpha: m \Rightarrow f$ is invertible then $f \in \M$, and dually for $\E$;
							\item given a square $(s,t,\phi)$ between $e\in \E$ and $m\in \M$
							\begin{cd}
								{D} \ar[r,"s"]  \ar[d,"e"', two heads]  \ar[r,"\phi"'{inner sep=1.3ex}, iso, shift right=4.5ex]\& {Q}  \ar[d,"m",rightarrowtail]  \\
								{R}  \ar[r,"t"'] \& {A}
							\end{cd}
							where $m$ is the $\N$-2-kernel of $f\:A\to B$ with structure 2-cell $\alpha$ and $e$ is the $\N$-2-cokernel of $g\:C \to D$ with structure 2-cell $\beta$, such that the 2-cell 
							\begin{cd}
								{C} \ar[r, shift right=7ex, "\beta"'{pos=0.8}, iso] \ar[rdd,null, bend right=20] \ar[rrrddd,null, bend right=55] \ar[rrrddd,null, bend left=55] \ar[rd, "g"] \&[-2ex] {} \& {} \& {}\\[-4ex]
								{} \&  {D} \ar[r,"s"] \ar[r, iso, shift left=3ex, "\nu"'{pos=0.8}] \ar[d,"e"', two heads]  \ar[r,"\phi"'{inner sep=1.3ex}, iso, shift right=4.5ex]\& {Q} \ar[rdd, bend left=20, null]   \ar[d,"m",rightarrowtail] \& {} \\
								{} \& {R} \ar[r, iso, shift right=3ex, "\nu"'{pos=0.8}]   \ar[r,"t"'] \& {A} \ar[r, shift left=2ex,xshift=-2ex, "\alpha^{-1}"'{pos=0.8},iso] \ar[rd, "f"']\& {}\\[-4ex]
								{} \& {} \& {} \& {B}
							\end{cd}
							is an invertible null 2-cell, there exist an arrow $d\:R \to Q$ and invertible 2-cells $\sigma$ and $\rho$ as below such that 
							\begin{eqD*}
								\begin{cd}*
									{D} \ar[r,"s"]  \ar[d,"e"', two heads]  \ar[r,"\phi"'{inner sep=1.3ex}, iso, shift right=4.5ex]\& {Q}  \ar[d,"m",rightarrowtail]  \\
									{R}  \ar[r,"t"'] \& {A}
								\end{cd}
								\quad = \quad
								\begin{cd}*
									{D} \ar[r,"s"]  \ar[d,"e"', two heads]  \ar[r,"\sigma"'{inner sep=-0.5ex, pos=0.8}, iso, shift right=2.5ex, xshift=-2ex]\& {Q}  \ar[d,"m",rightarrowtail]  \\
									{R} \ar[r,"\rho"'{inner sep=-0.2ex, pos=0.8}, iso, shift left=2.5ex, xshift=1.2ex]\ar[ru,"d"{inner sep=0.2ex, pos=0.45}] \ar[r,"t"'] \& {A}
								\end{cd}
							\end{eqD*}
							Moreover, given squares $(s,t,\phi)$ and $(s',t',\phi')$, filled by $(d,\sigma,\rho)$ and $(d',\sigma',\rho')$ respectively and a 2-cell $(\lambda,\mu)$ between such squares, there exists a unique 2-cell $\iota\:d\aR{}d'$ such that
							\begin{eqD*}
								\begin{cd}*[6][9]
									\& Q \ar[d,"m",rightarrowtail] \\
									R \ar[r,"t'"']\ar[ru,bend left=17,"d"{inner sep=0.1ex},""{name=A}] \ar[ru,bend right=17,"d'"'{inner sep=0.1ex},""'{name=B}] 
									\ar[r,iso,shift left=2.7ex,"\rho'"'{pos=0.6},start anchor={[xshift=7ex]}]
									\& A
									\ar[from=A,to=B,Rightarrow,"\iota",shorten <=0.5ex,shorten >=0.5ex]
								\end{cd}\quad \h[2] = \quad \h[-3]
								\begin{cd}*[6][9]
									\& Q \ar[d,"m",rightarrowtail] \\
									R \ar[r,"t",bend left=16,""{name=A}]\ar[r,"t'"',bend right=20,""'{name=B}]\ar[ru,bend left=17,"d"{inner sep=0.1ex},""]
									\ar[r,iso,shift left=3.7ex,"\rho"{pos=0.6,inner sep=1ex},start anchor={[xshift=4.5ex]}]
									\& A
									\ar[from=A,to=B,Rightarrow,"\mu",shorten <=0.5ex,shorten >=0.5ex]
								\end{cd}
							\end{eqD*}
							and 
							\begin{eqD*}
								\begin{cd}*[6][9]
									D \ar[d, "e"', two heads] \ar[r,"s"] \ar[r,iso,shift right=2.1ex,"\sigma"'{pos=0.6},start anchor={[xshift=-7ex]}] \& Q \\
									R \ar[ru,bend left=17,"d"{inner sep=0.1ex, pos=0.65},""{name=A}] \ar[ru,bend right=17,"d'"'{inner sep=0.1ex},""'{name=B}] 
									\& 
									\ar[from=A,to=B,Rightarrow,"\iota",shorten <=0.5ex,shorten >=0.5ex]
								\end{cd}\quad \h[2] = \quad \h[-3]
								\begin{cd}*[6][9]
									D \& Q \\
									R
									\arrow[""{name=0, anchor=center, inner sep=0}, "s", bend left=18, from=1-1, to=1-2]
									\arrow[""{name=1, anchor=center, inner sep=0}, "{s'}"', bend right=18, from=1-1, to=1-2]
									\arrow["{\sigma'}"',iso, shift right=7.5, xshift =-2ex,from=1-1, to=1-2]
									\arrow["e"', two heads, from=1-1, to=2-1]
									\arrow["{d'}"', bend right=20, from=2-1, to=1-2]
									\arrow["\lambda", Rightarrow, from=0, to=1,shorten <=0.5ex,shorten >=0.5ex]
								\end{cd}
							\end{eqD*}
						\end{enum}
					\end{definition}
					
					\begin{remark}
						A relatively orthogonal cokernel-kernel factorization system is a factorization system with a weaker orthogonality condition relative to the 2-ideal $\N$. Indeed, condition (v) of Definition \ref{ROFS} is the usual orthogonality required for a 2-dimensional orthogonal factorization system restricted to the squares from $e\in \E$ to $m\in \M$ that are compatible with the structure 2-cells that present $e$ and $m$ as a cokernel and a kernel respectively.
					\end{remark}
					\begin{remark}
						Given a relatively orthogonal cokernel-kernel factorization system $(\E,\M)$ on a 2-category $\L$, we still have the 2-functors $\operatorname{dom}\:\Esq\to \L$ and $\operatorname{cod}\:\Msq\to \L$ introduced in \thex\ref{teorfibofsubobj}, that consider the 2-quotients and the 2-subobjects relative to $(\E,\M)$. However, they might not be weak 2-(op)fibrations anymore.
					\end{remark}
					
					We now show that a large part of the proof of $(ii)\aR{}(i)$ in \thex\ref{theorchargrandistwocat} can still be carried out when we relax the assumption of a closed 2-ideal to that of a weakly closed 2-ideal.
					
					\begin{theorem}
						Let $\L$ be a weakly Grandis 2-exact 2-category, with respect to a weakly closed 2-ideal $\N$. Then the classes $\E$ of all $\N$-2-cokernels and $\M$ of all $\N$-2-kernels form a relatively orthogonal cokernel-kernel factorization system on $\L$ such that there exists a biequivalence over $\L$ \begin{cd}
							\Esq \ar[rd,"\operatorname{dom}"'{inner sep=0.2ex}]\ar[rr,bend left=16,"K"]\ar[rr,simeq]\&\& \Msq \ar[ld,"\operatorname{cod}"{inner sep=0.2ex}]\ar[ll,bend left=16,"C"]\\
							\& \L
						\end{cd}
						between the 2-functors $\operatorname{dom}$ and $\operatorname{cod}$  associated to $(\E,\M)$, with $K$ and $C$ normal pseudofunctors.
						
						Analogously for $\L$ weakly Puppe 2-exact. 
					\end{theorem}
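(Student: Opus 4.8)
The plan is to transcribe, almost line by line, the argument for the implication $(ii)\aR{}(i)$ in the proof of \thex\ref{theorchargrandistwocat}, isolating the single place where full closedness of $\N$ was actually used and replacing the input there by what weak closedness provides. Concretely, I would set $\E$ to be the class of all $\N$-2-cokernels and $\M$ the class of all $\N$-2-kernels, as in that proof, and then verify in turn the five conditions of \defx\ref{ROFS} and, separately, the existence of the biequivalence.

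For conditions (i)--(iv) of \defx\ref{ROFS} nothing changes. Condition (i) is the definition of $\E$ and $\M$; condition (ii) is the cokernel-then-kernel factorization assumed in the definition of weakly Grandis 2-exactness (\defx\ref{defweaklygrandispuppe}); and conditions (iii) and (iv) --- stability of $\N$-2-kernels under precomposition with an equivalence and under replacement up to an invertible 2-cell, and dually for $\N$-2-cokernels --- were already established in the proof of \thex\ref{theorchargrandistwocat} from the universal property of $\N$-2-kernels and \prox\ref{propkerarefaith}, with no appeal to closedness.

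The crux is condition (v). Given a square $(s,t,\phi)$ between $e\in\E$ and $m\in\M$, with $m$ the $\N$-2-kernel of $f\:A\to B$ (structure 2-cell $\alpha$) and $e$ the $\N$-2-cokernel of $g\:C\to D$ (structure 2-cell $\beta$), and with the displayed pasting from $C$ to $B$ an invertible null 2-cell, I would run exactly the orthogonality argument from the proof of \thex\ref{theorchargrandistwocat}: the pasting exhibits $f\c t\c e$ as isomorphic to a null morphism, and --- this is the key observation --- the hypothesis that that pasting is an \emph{invertible null} 2-cell is precisely the compatibility condition required by the weak coreflection property of \defx\ref{defweaklyreflect}. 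Hence one may apply the fact that the $\N$-2-cokernel $e$ weakly coreflects null morphisms to deduce that $f\c t$ is isomorphic to a null morphism via a 2-cell $\psi$ of the right shape, and then the universal property of the $\N$-2-kernel $m$ induces $d\:R\to Q$ together with the invertible 2-cells $\sigma$ and $\rho$ satisfying the required equation. For the 2-dimensional half of (v): given squares filled by $(d,\sigma,\rho)$ and $(d',\sigma',\rho')$ and a 2-cell $(\lambda,\mu)$ between them, the 2-cell $\rho'^{-1}\c\mu\c\rho$ is a null 2-cell because the $\N$-2-cokernel $e$ coreflects null 2-cells --- and coreflection of null 2-cells is still demanded by weak closedness --- so the 2-dimensional universal property of $m$ produces the unique $\iota\:d\aR{}d'$ satisfying both displayed equations, uniqueness coming from faithfulness of $m$ (\prox\ref{propkerarefaith}).

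Finally, the biequivalence is constructed verbatim as in the proof of \thex\ref{theorchargrandistwocat}: define $K\:\Esq\to\Msq$ by sending $e$ to its $\N$-2-kernel and extending to morphisms and 2-cells via the 1- and 2-dimensional universal properties of $\N$-2-kernels, and dually $C\:\Msq\to\Esq$ by taking $\N$-2-cokernels, obtaining normal pseudofunctors over $\L$; none of these constructions uses closedness. Then build the pseudonatural equivalences $\eta\:\Id{}\aR{}K\c C$ and $\epsilon\:C\c K\aR{}\Id{}$ exactly as before, using that every $\N$-2-kernel is an $\N$-2-kernel of its $\N$-2-cokernel and dually --- part of the definition of weakly Grandis 2-exactness --- to see that the components $\eta_m$, $\epsilon_e$ are equivalences, pseudonaturality being checked via faithfulness of $\N$-2-kernels. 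The weakly Puppe case follows by specializing to $\N=(0)$, which is weakly closed by \prox\ref{zeroisweaklyclosed}. I expect the only real obstacle to be bookkeeping: carefully matching the shape of the invertible null 2-cell appearing in \defx\ref{ROFS}(v) against the precise compatibility hypothesis in \defx\ref{defweaklyreflect}, so that weak coreflection is legitimately applicable; once that match is confirmed, every remaining step is a faithful copy of the closed case.
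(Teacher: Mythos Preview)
Your proposal is correct and follows essentially the same approach as the paper: reuse the $(ii)\Rightarrow(i)$ argument of \thex\ref{theorchargrandistwocat}, observe that the extra invertible-null-2-cell hypothesis in \defx\ref{ROFS}(v) supplies exactly the compatibility needed to invoke weak coreflection of null morphisms in place of full coreflection, and note that the 2-dimensional orthogonality and the biequivalence go through unchanged since they only use coreflection of null 2-cells and the kernel-of-cokernel axiom respectively. The bookkeeping obstacle you flag is precisely the one step the paper spells out, namely breaking up the $\nu$ for $s\c g$ into the pasting of those for $s$ and $g$ up to an invertible null 2-cell so that the resulting 2-cell is compatible with $\beta$.
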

					\begin{proof}
						We start by proving that $(\E.\M)$ is a relatively orthogonal cokernel-kernel factorization system. Condition (i) and (ii) of \defx \ref{ROFS} are satisfied by construction and conditions (iii) and (iv) are straightforward to prove using the properties of $\N$-2-kernels and $\N$-2-cokernels. To prove (iv), consider a square $(s,t,\phi)$ between $e\in \E$ and $m\in \M$ as below, where $m$ is the $\N$-2-kernel of $f\:A\to B$ with structure 2-cell $\alpha$ and $e$ is the $\N$-2-cokernel of $g\:C \to D$ with structure 2-cell $\beta$, such that the 2-cell 
						\begin{cd}
							{C} \ar[r, shift right=7ex, "\beta"'{pos=0.8}, iso] \ar[rdd,null, bend right=20] \ar[rrrddd,null, bend right=55] \ar[rrrddd,null, bend left=55] \ar[rd, "g"] \&[-2ex] {} \& {} \& {}\\[-4ex]
							{} \&  {D} \ar[r,"s"] \ar[r, iso, shift left=3ex, "\nu"'{pos=0.8}] \ar[d,"e"', two heads]  \ar[r,"\phi"'{inner sep=1.3ex}, iso, shift right=4.5ex]\& {Q} \ar[rdd, bend left=20, null]   \ar[d,"m",rightarrowtail] \& {} \\
							{} \& {R} \ar[r, iso, shift right=3ex, "\nu"'{pos=0.8}]   \ar[r,"t"'] \& {A} \ar[r, shift left=2ex,xshift=-2ex, "\alpha^{-1}"'{pos=0.8},iso] \ar[rd, "f"']\& {}\\[-4ex]
							{} \& {} \& {} \& {B}
						\end{cd}
						is an invertible null 2-cell. Notice that given this invertible null 2-cell we can break up the $\nu$ relative to $s\c g$ into the pasting of the one relative to $s$ with the one relative to $g$ up to an isomorphic null 2-cell and so the 2-cell 
						\begin{cd}
							{D} \ar[rrdd, bend left=80, null]  \ar[r,"s"]  \ar[d,"e"', two heads]  \ar[r,"\phi"'{inner sep=1.3ex}, iso, shift right=4.5ex]\& {Q} \ar[r,iso,"\nu"', shift right=4ex, xshift=3ex]\ar[rdd, bend left=20, null]   \ar[d,"m",rightarrowtail] \&[2ex] {} \\
							{R}    \ar[r,"t"'] \& {A} \ar[rd,"f"'] \ar[r, shift left=2ex,xshift=-2ex, "\alpha^{-1}"'{pos=0.8},iso] \& {}\\[-6ex]
							{} \& {} \& {B}
						\end{cd}
						is compatible with $\beta$. Hence, the fact that $e$ weakly coreflects null morphisms induces a 2-cell 
						\[\begin{tikzcd}[ampersand replacement=\&]
							R \& A \& B
							\arrow["t", from=1-1, to=1-2]
							\arrow["\phi"', shift right=3,  iso, from=1-1, to=1-3]
							\arrow[bend right=40, from=1-1, null,to=1-3, "\hat{\phi}"']
							\arrow["f", from=1-2, to=1-3]
						\end{tikzcd}\]
						with the same properties as the one of the proof of \thex \ref{theorchargrandistwocat}. From here on the same argument of the proof of \thex \ref{theorchargrandistwocat} allows us to conclude the 1-dimensional part of the orthogonality property. The proof of the 2-dimensional part of the orthogonality property remains precisely the same as in \thex \ref{theorchargrandistwocat} since we still have that $e$ coreflects null 2-cells. So $(\E.\M)$ is a relatively orthogonal cokernel-kernel factorization system. Moreover, the existence of the biequivalence between $K$ and $C$ follows from the same argument of the proof of \thex\ref{theorchargrandistwocat}, since that part of the proof does not use that $\N$ is a closed ideal (not even the weak closedness of $\N$ is is needed there).
					\end{proof}
					
					We now compare our theory with the 2-dimensional Puppe exactness properties studied by Dupont and Nakaoka. Notice first that all the considered notions of Puppe 2-exactness require the 2-category to have all 2-kernels and 2-cokernels. There is then some variation on which morphisms are required to be 2-kernels of their 2-cokernels and dually 2-cokernels of their 2-kernels.
					
					\begin{remark}\label{remcompareabelian}
						It is almost true that every abelian $\mathrm{Gpd}^\ast$-category is weakly Puppe 2-exact. Indeed, in an abelian $\mathrm{Gpd}^\ast$-category, 0-monomorphisms coincide with 2-kernels, by \cite[{\corx{158}, \prox{166}}]{Dup08}. So the first three out of four conditions of weakly Puppe 2-exactness (see \defx\ref{defweaklygrandispuppe}) are satisfied. Moreover, by \cite[{\corx{160}}]{Dup08}, every abelian $\mathrm{Gpd}^\ast$-category has a factorization system given by the epimorphisms (in Dupont's terminology) and the fully 0-faithful arrows. While epimorphisms are precisely the 2-cokernels in an abelian $\mathrm{Gpd}^\ast$-category, fully 0-faithful arrows are not necessarily 2-kernels. So in general not every abelian $\mathrm{Gpd}^\ast$-category is weakly Puppe 2-exact, nor the converse holds. However, the stronger Nakaoka's notion of relatively exactness also ensures that fully 0-faithful arrows are 2-kernels. We obtain the following theorem.
					\end{remark}
					
					\begin{theorem}
						For $\L$ a locally SCG category, the following are equivalent:
						\begin{enumT}
							\item $\L$ is a relatively exact $\mathrm{Gpd}^\ast$-category;
							\item $\L$ is weakly Puppe 2-exact, every faithful morphism is a 2-kernel and every cofaithful morphism is a 2-cokernel.
						\end{enumT}
					\end{theorem}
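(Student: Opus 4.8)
The plan is to prove the two implications by matching, one clause at a time, the defining conditions of a relatively exact $\mathrm{Gpd}^\ast$-category with those of weak Puppe $2$-exactness (\defx\ref{defweaklygrandispuppe}). I would first record the enabling observations: a locally SCG category is in particular a $2$-pointed category enriched over $\mathrm{Gpd}^\ast$, its zero object being a bizero object, so ``weakly Puppe $2$-exact'' for $\L$ refers to the canonical $2$-ideal $(0)$; by \remx\ref{remcomparisonkernels} the classical $2$-kernels and $2$-cokernels in $\L$ coincide with the $(0)$-$2$-kernels and $(0)$-$2$-cokernels in our sense; and by \prox\ref{zeroisweaklyclosed} the $2$-ideal $(0)$ is automatically weakly closed. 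After these identifications, the condition ``$\L$ has all $2$-kernels and $2$-cokernels'' common to both sides becomes literally the same statement, and weak closedness of $(0)$ costs nothing.

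For $(i)\aR{}(ii)$ I would then argue as follows. Since $(0)$-$2$-kernels are faithful by \prox\ref{propkerarefaith}, Nakaoka's axiom that every faithful morphism is a $2$-kernel of its $2$-cokernel applies in particular to every $(0)$-$2$-kernel, which yields the clause of \defx\ref{defweaklygrandispuppe} asking that each $(0)$-$2$-kernel $m$ be a $(0)$-$2$-kernel of its $(0)$-$2$-cokernel; here one needs the small check that the ``appropriate structure $2$-cell'' in Nakaoka's condition agrees, up to an invertible null $2$-cell, with the comparison $2$-cell carried by the $2$-cokernel, which follows from the bizero-object property of $0$ (essential uniqueness of morphisms into and out of $0$, hence of null morphisms and null $2$-cells through it). The same axiom gives directly the two extra requirements of $(ii)$, that faithful morphisms are $2$-kernels and cofaithful morphisms are $2$-cokernels. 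The only clause of \defx\ref{defweaklygrandispuppe} that is not purely formal is the cokernel–kernel factorization of an arbitrary morphism: I would obtain it from Dupont's factorization system by epimorphisms and fully $0$-faithful arrows \cite{Dup08} (available because a relatively exact $\mathrm{Gpd}^\ast$-category is abelian, by \remx\ref{Nakaokacomparison}), together with the facts recalled in \remx\ref{remcompareabelian} that in this setting epimorphisms are precisely the $2$-cokernels and --- using relative exactness --- fully $0$-faithful arrows are $2$-kernels. This factorization step is the one I expect to require the most care, since it is where relative exactness (and not mere abelianness) is genuinely used.

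For $(ii)\aR{}(i)$ the argument is shorter and entirely formal. The first defining condition of relative exactness is immediate from the first clause of \defx\ref{defweaklygrandispuppe} and the identification of $2$-kernels with $(0)$-$2$-kernels. For the second, given a faithful morphism $m$, the hypothesis in $(ii)$ makes $m$ a $(0)$-$2$-kernel, and then the clause of \defx\ref{defweaklygrandispuppe} saying that every $(0)$-$2$-kernel is a $(0)$-$2$-kernel of its $(0)$-$2$-cokernel --- with structure $2$-cell given up to an invertible null $2$-cell by the $2$-cokernel --- upgrades this to exactly Nakaoka's ``$m$ is a $2$-kernel of its $2$-cokernel with appropriate structure $2$-cell''; dually, every cofaithful morphism is a $2$-cokernel of its $2$-kernel, so $\L$ is relatively exact. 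In this direction the factorization clause and the weak closedness of $(0)$ are not needed: the substance of the theorem is concentrated in the factorization step of $(i)\aR{}(ii)$ and in the bookkeeping identifying the two notions of $2$-(co)kernel and their structure $2$-cells.
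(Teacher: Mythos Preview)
Your proposal is correct and follows essentially the same route as the paper: both directions hinge on the identifications you list at the start, and the substantive work in $(i)\Rightarrow(ii)$ is the cokernel--kernel factorization, obtained via the abelian $\mathrm{Gpd}^\ast$-structure (\remx\ref{Nakaokacomparison}) and Dupont's (epi, fully $0$-faithful) factorization, upgraded to (2-cokernel, 2-kernel) using relative exactness as in \remx\ref{remcompareabelian}. One small difference worth noting: for the clause ``every $(0)$-$2$-kernel is a $(0)$-$2$-kernel of its $(0)$-$2$-cokernel'' you argue directly from \prox\ref{propkerarefaith} that $2$-kernels are faithful and then invoke Nakaoka's axiom, whereas the paper routes this through the abelian $\mathrm{Gpd}^\ast$-category (where $0$-monomorphisms coincide with $2$-kernels); your path is slightly more direct. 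For the factorization step the paper makes explicit the chain fully $0$-faithful $\Rightarrow$ $0$-faithful $\Rightarrow$ faithful (the last implication by \cite[Lemma~5.2]{Nak10}) $\Rightarrow$ $2$-kernel, which is the content behind your phrase ``using relative exactness''; you may want to cite that lemma when fleshing out the argument.
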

					\begin{proof}
						We prove $(i)\aR{} (ii)$. By \remx\ref{remcompareabelian} and \remx\ref{Nakaokacomparison}, we know that every relatively exact $\mathrm{Gpd}^\ast$-category satisfies the first three out of four conditions of weakly Puppe 2-exactness, because every abelian $\mathrm{Gpd}^\ast$-category does so. Moreover, again by \remx\ref{remcompareabelian} and \remx\ref{Nakaokacomparison}, we know that every morphism factorizes up to isomorphism as a 2-cokernel followed by a fully 0-faithful morphism. But fully 0-faithful morphisms are 0-faithful and, by Lemma 5.2 of \cite{Nak10}, in a relatively exact $\mathrm{Gpd}^\ast$-category  0-faithful morphisms are faithful. Moreover by definition of relatively exact $\mathrm{Gpd}^\ast$-category, every faithful morphism is a 2-kernel and thus we conclude that every morphism factorizes up to isomorphism as a 2-cokernel followed by a 2-kernel. 
						
						We now prove $(ii)\aR{} (i)$. By assumption if $\L$ is weakly Puppe 2-exact it has all 2-kernels and all 2-cokernels. Moreover, in a weakly Puppe 2-exact 2-category every 2-kernel is a 2-kernel of its 2-cokernel and so, thanks to the assumption that every faithful monomorphism is a 2-kernel, we conclude that every faithful morphism is a 2-kernel of its 2-cokernel. And dually for cofaithful morphisms.  
					\end{proof}

					\begin{remark}
						In the context of locally SCG categories, the distance between our weakly 2-Puppe exactness and Nakaoka's relatively exactness is thus the request that all faithful morphisms, i.e.\ all 2-dimensional monomorphisms, are 2-kernels and all cofaithful morphisms, i.e.\ all 2-dimensional epimorphisms, are 2-cokernels. In dimension 1, if every morphism factorizes as a cokernel followed by a kernel, then automatically all monomorphisms are kernels and all epimorphisms are cokernels. But the same argument does not extend to dimension 2.
					\end{remark}
					
					\begin{theorem}
						Let $\L$ be a 2-Puppe-exact $\mathrm{Gpd}^\ast$-category (\'{a} la Dupont). Then $\L$ is weakly Puppe 2-exact.
					\end{theorem}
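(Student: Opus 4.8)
The plan is to verify directly the four defining conditions of weak Puppe $2$-exactness (\defx\ref{defweaklygrandispuppe}) for $\L$, regarded as a $2$-category via the forgetful functor $\mathrm{Gpd}^\ast\to\Cat$ and equipped with its canonically associated $2$-ideal $(0)$.

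First I would check that $\L$ is $2$-pointed with bizero object $0$: since the terminal object of $\mathrm{Gpd}^\ast$ is the one-object, one-morphism pointed groupoid, a $\mathrm{Gpd}^\ast$-enriched zero object $0$ satisfies $\HomC{\L}{A}{0}\iso\HomC{\L}{0}{A}\iso\mathbf 1$ for every $A$, so $0$ is simultaneously biterminal and biinitial. Next I would identify Dupont's $2$-kernels and $2$-cokernels with the $(0)$-$2$-kernels and $(0)$-$2$-cokernels of \defx\ref{deftwokernel}: in a $\mathrm{Gpd}^\ast$-category every $2$-cell is invertible and, up to a unique invertible $2$-cell, there is a unique null morphism $A\to 0\to B$, so the universal property of Dupont's $2$-kernel of $f$ is precisely that of the biisoinserter of $f$ with such a null morphism, which is the $(0)$-$2$-kernel (this is \remx\ref{remcomparisonkernels} for strictly described categories, and follows in general from the biisoinserter description recalled earlier). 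Granting this identification, the existence axiom in Dupont's definition gives that $\L$ has all $(0)$-$2$-kernels and $(0)$-$2$-cokernels, which is the first bullet of \defx\ref{defweaklygrandispuppe}; the second bullet, that $(0)$ be weakly closed, holds automatically by \prox\ref{zeroisweaklyclosed}.

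For the third bullet I would argue that every $(0)$-$2$-kernel $m$ is a $(0)$-$2$-kernel of its $(0)$-$2$-cokernel. By \prox\ref{propkerarefaith} the morphism $m$ is faithful, hence reflects every $2$-cell, hence in particular reflects null $2$-cells, i.e.\ it is $0$-faithful. The second axiom of Dupont's definition --- every $0$-faithful arrow is a $2$-kernel of its $2$-cokernel with the canonical structure $2$-cell --- then says exactly that $m$ is a $(0)$-$2$-kernel of its $(0)$-$2$-cokernel $c_m$, and a routine inspection shows that Dupont's canonical structure $2$-cell between $c_m\c m$ and a null morphism is, up to an invertible null $2$-cell, the one induced by $c_m$, as required in \defx\ref{defweaklygrandispuppe}. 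The statement for $(0)$-$2$-cokernels is dual.

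The main obstacle is the fourth bullet: every $f\:A\to B$ must factor, up to an isomorphic $2$-cell, as a $(0)$-$2$-cokernel followed by a $(0)$-$2$-kernel. I would take the $(0)$-$2$-cokernel $c_{k_f}\:A\to C(k_f)$ of the $(0)$-$2$-kernel $k_f$ of $f$; since the structure $2$-cell of $k_f$ makes $f\c k_f$ isomorphic to a null morphism, the universal property of $c_{k_f}$ produces $m\:C(k_f)\to B$ with $f\iso m\c c_{k_f}$, and it remains to see that $m$ is a $(0)$-$2$-kernel. By the second axiom of Dupont's definition it suffices that $m$ be $0$-faithful, and establishing this is exactly where the $2$-Puppe-exactness hypotheses (including the third axiom, via the copip/pip and root constructions of \cite{Dup08}) are needed: Dupont's canonical-factorization theory shows that the comparison morphism from the coimage $C(k_f)$ to the image $K(c_f)$ is $0$-faithful, and, since $K(c_f)\to B$ is a $(0)$-$2$-kernel and hence $0$-faithful and $0$-faithful morphisms compose, $m$ is $0$-faithful. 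I expect this to be the delicate step: in dimension $1$ a cokernel-then-kernel factorization follows as soon as every monomorphism is a kernel, whereas in dimension $2$ the $0$-faithfulness of the coimage comparison genuinely rests on the exactness axioms. Once the four bullets are established --- their duals coming from the evident dualization throughout --- $\L$ is weakly Puppe $2$-exact.
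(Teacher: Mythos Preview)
Your proposal is correct and takes essentially the same approach as the paper's proof: verify the four bullets of \defx\ref{defweaklygrandispuppe}, handling the first three exactly as the paper does (existence of $2$-kernels and $2$-cokernels by definition, weak closedness of $(0)$ by \prox\ref{zeroisweaklyclosed}, and the kernel-of-its-cokernel condition via $m$ faithful $\Rightarrow$ $m$ $0$-faithful $\Rightarrow$ Dupont's axiom applies). The only difference is in the factorization step: the paper simply invokes Proposition~179 of \cite{Dup08} directly to obtain the cokernel-then-kernel factorization, whereas you sketch a longer route through the coimage--image comparison and its $0$-faithfulness, which ultimately rests on the same material in Dupont's thesis.
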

					
					\begin{proof}
						By definition of 2-Puppe-exact $\mathrm{Gpd}^\ast$-category (\'{a} la Dupont), $\L$ has all 2-kernels and all 2-cokernels with respect to the ideal $(0)$. Moreover, the ideal $(0)$ is weakly closed by \prox \ref{zeroisweaklyclosed}. 
						
						We prove that every 2-kernel is a 2-kernel of its 2-cokernel with appropriate structure 2-cell. Let $m$ be a 2-kernel. Thanks to the universal property of the 2-kernel $m$, is faithful and thus it is 0-faithful. But then, by Proposition 179 of \cite{Dup08}, $m$ is canonically the 2-kernel of its 2-cokernel. Analogously, the dual property holds for 2-cokernels.
						
						Finally, the fact that every morphism in $\L$ factorizes up to isomorphism as a 2-cokernel followed by a 2-kernel follows from Proposition 179 of \cite{Dup08}.
					\end{proof}
					
					\begin{remark}
						Our notion of weakly Puppe 2-exactness thus generalizes both Nakaoka's relatively exactness and Dupont's 2-Puppe-exactness, and as a consequence also Dupont's 2-abelianness. So all the examples of relatively exact $\mathrm{Gpd}^\ast$-category studied in \cite{Nak08} and all the examples of 2-Puppe-exact (or 2-abelian) $\mathrm{Gpd}^\ast$-category (\'{a} la Dupont) studied in \cite{Dup08} are examples of our theory.
					\end{remark}
					
					The following diagram condenses the comparison between the considered 2-dimensional notions of Puppe exactness:
					\begin{cd}[1]
						\&\& {\underset{\mathrm{Gpd}^\ast\text{-category}}{\text{abelian}}} \\
						\& {\underset{\mathrm{Gpd}^\ast\text{-category}}{\text{relatively exact}}} \\
						{\underset{\mathrm{Gpd}^\ast\text{-category}}{\text{2-abelian}}} \&\& {\text{weakly Puppe 2-exact}} \\
						\& {\underset{\text{(\'{a} la Dupont)}}{\underset{\mathrm{Gpd}^\ast\text{-category}}{\text{2-Puppe-exact}}}}
						\arrow[Rightarrow, from=2-2, to=1-3]
						\arrow[Rightarrow, from=2-2, to=3-3]
						\arrow[Rightarrow, from=3-1, to=2-2]
						\arrow[Rightarrow, from=3-1, to=4-2]
						\arrow[Rightarrow, from=4-2, to=3-3]
					\end{cd}

					Our work applies the theory of (2-dimensional) fibrations, as well as that of factorization systems, to abstractly deduce how to define Grandis and Puppe exactness in dimension 2, as opposed to trying to extract the right conditions from inspecting the examples. This paper also sheds new light on the 2-dimensional cohomology theory started by Dupont and Nakaoka.
					
					


\begin{thebibliography}{99}
						\bibitem{Bak22} I.~Bakovi\'{c}, Fibrations of bicategories, preprint, https://www2.irb.hr/korisnici/ibakovic/groth2fib. pdf, last retrieved December 2022.
						
						\bibitem{BaezCrans} J.~C.~Baez and A.~S.~Crans, Higher-dimensional algebra VI: Lie 2-algebras, Theory Appl. Categ., 12, 2004, 492--538
						
						\bibitem{DupVit03} M.~Dupont and E. M.~Vitale, Proper factorization systems in 2-categories, J. Pure Appl. Algebra 179, No. 1--2, 2003, 65--86.
						
						\bibitem{Dup08} M.~Dupont, Abelian categories in dimension 2, PhD  thesis, Université catholique de Louvain, 2008.
						
						\bibitem{Ehr64} C.~Ehresmann, Sur une notion générale de cohomologie, C. R. Acad. Sci. Paris 259, 1964, 2050--2053. 
						
						\bibitem{JanRei24} E.~Caviglia, Z.~Janelidze, L.~Mesiti and \"U.~Reimaa, Serre functors between Puppe exact categories and $2$-dimensional exactness, in preparation.
						\bibitem{JanWei16} Z.~Janelidze and T.~Weighill, Duality in non-abelian algebra II. From Isbell bicategories to Grandis exact categories, Journal of Homotopy and Related Structures 11, 2016, 553--570.
						
						
						\bibitem{Mes25} L.~Mesiti, 2-classifiers via dense generators and Hofmann-Streicher universe in stacks, Canadian Journal of Mathematics, 2025, pp. 1–52, 2025.
						
						\bibitem{Nak08} H.~Nakaoka, Cohomology theory in 2-categories, Theory Appl. Categ. 20, 2008, 543--604.
						
						\bibitem{Nak10} H.~Nakaoka, Comparison of the definitions of abelian 2-categories, Tsukuba J. Math. 34, No. 2, 2010, 173--182. 
						
						\bibitem{Str82} R.~Street, Characterization of bicategories of stacks, Category theory. Applications to algebra, logic and topology, Proc. int. Conf., Gummersbach 1981, 1982 Lect. Notes Math. 962, 282--291.
						
						\bibitem{Web07} M.~Weber, Yoneda structures from 2-toposes, Category theory. Appl. Categ. Struct. 15, No. 3, 2007, 259--323. 
						
						
					\end{thebibliography}
				\end{document}